
\documentclass[12pt, reqno,amsmath,amsthm,amssymb,amscd]{amsart}
\usepackage{pifont}
\oddsidemargin .2in \evensidemargin .2in \textwidth 6in \textheight
8.7in
\usepackage{amssymb}
\baselineskip15pt
\usepackage{mathrsfs,amssymb}
\usepackage{multicol}\multicolsep=0pt
\usepackage[enableskew,vcentermath]{youngtab}

\usepackage[sort]{cite}

\def\crulefill{\leavevmode\leaders\hrule height 1pt\hfill\kern 0pt}
\long\def\QUERY#1{%
\leavevmode\newline%
\noindent$\star\star\star$\thinspace\textsf{Comment/Query}\crulefill\newline%
   \space #1\newline\hbox to 120mm{\crulefill}$\star\star\star$\newline}
\newtheorem{Theorem}{Theorem}[section]
\newtheorem{Lemma}[Theorem]{Lemma}
\newtheorem{Cor}[Theorem]{Corollary}
\newtheorem{Prop}[Theorem]{Proposition}

\setcounter{section}{0} \theoremstyle{definition}
\newtheorem{example}[Theorem]{Example}

\newtheorem{Defn}[Theorem]{Definition}

\newtheorem{rem}[Theorem]{Remark}

\numberwithin{equation}{section}
\theoremstyle{definition}


\makeatletter
\def\enumerate{\begingroup\ifnum\@enumdepth>3\@toodeep\else
      \advance\@enumdepth\@ne
      \edef\@enumctr{enum\romannumeral\the\@enumdepth}%
      \topsep\z@\parskip\z@
      \list{\csname label\@enumctr\endcsname}
        {\@nmbrlisttrue\let\@listctr\@enumctr
         \parsep\z@\itemsep\z@\topsep\z@
         \setcounter{\@enumctr}{0}
         \def\makelabel##1{\hss\llap{\rm ##1}}
       }\fi}

\makeatother

\let\bar=\overline
\let\epsilon=\varepsilon
\def\({\big(}
\def\){\big)}

\def\0{\underline{0}}

\def\B{\mathscr B}

\def\G{\mathcal G}
\def\H{\mathscr H}

\DeclareMathOperator{\Rad}{Rad} \DeclareMathOperator*{\Res}{Res}

\def\m{\mathfrak m}

\def\Std{\mathscr{T}^{std}}

\def\m{\mathfrak m}
\def\n{\mathfrak n}
\def\s{\mathfrak s}
\def\ts{\tilde\s}
\def\t{\mathfrak t}
\def\u{\mathfrak u}
\def\v{\mathfrak v}

\def\F{\mathcal F}
\def\G{\mathcal G}
\def\Hom{\text{Hom}}

\def\Res{\text{Res}}
\def\U{\mathbf U}

\def\ts{\mathsf t}

\def\res{\text{res} }

{\catcode`\|=\active
  \gdef\set#1{\mathinner{\lbrace\,{\mathcode`\|"8000%
                                   \let|\midvert #1}\,\rbrace}}
  \gdef\seT#1{\mathinner{\Big\lbrace\,{\mathcode`\|"8000%
                                   \let|\midverT #1}\,\Big\rbrace}}
}
\def\midvert{\egroup\mid\bgroup}
\def\midverT{\egroup\,\Big|\,\bgroup}

\def\Set[#1]#2|#3|{\Big\{\ #2\ \Big| \
           \vcenter{\hsize #1mm\centering #3}\Big\}}




\begin{document}
\baselineskip15pt
\title{ Decomposition numbers of  quantized walled Brauer algebras}
\author{ Hebing Rui and Linliang Song }
\address{H.R. Department of Mathematics,  East China Normal
University, Shanghai, 200062, China} \email{hbrui@math.ecnu.edu.cn}
\address{L.S. Department of Mathematics,  East China Normal
University, Shanghai, 200062, China}\email{52110601013@student.ecnu.edu.cn}
\thanks{H. Rui was supported partially  by NSFC in China, Shanghai Municipal Science and Technology Commission
~11XD1402200.}


\begin{abstract} In this paper, we establish explicit relationship between decomposition numbers of quantized walled Brauer algebras and those for
either Hecke algebras associated to certain symmetric groups or (rational) $q$-Schur algebras over a field $\kappa$. This enables us to use   Ariki's result~\cite{Ar} and Varagnolo-Vasserot's result~\cite{VV} to compute
such decomposition numbers
 via inverse Kazhdan-Lusztig polynomials associated with affine Weyl groups of type $A$ if the ground field is $\mathbb C$.
\end{abstract}
\sloppy \maketitle

\section{Introduction}

The quantized  walled Brauer algebra $\mathscr B_{r, s}$ with single parameter
was introduced by   Kosuda and Murakami~\cite{KM} in order to study  mixed
tensor products of natural module and its dual over  quantum general linear  group
$\U_q(\mathfrak{gl}_n)$ over $\mathbb C$. In \cite{Le}, Leduc introduced  quantized walled Brauer algebras $\mathscr B_{r, s}$  with two parameters $\rho$ and $q$.
They are associative algebras over a commutative ring $R$ containing $1$.

It is proved in \cite{Enyang2} that  $\mathscr B_{r, s}$ is  cellular over $R$ in the sense of \cite{GL}. Using standard
results on  representations  of cellular algebras in \cite{GL}, we classified  irreducible $\mathscr B_{r, s}$-modules   over an arbitrary
field  $\kappa$ in \cite{Rsong}. Further, we gave a criterion on the semisimplicity of $\mathscr B_{r, s}$ over $\kappa$. A further question is to compute
 dimensions of irreducible $\mathscr B_{r, s}$-modules in non semisimple case. This can be solved in theory by   determining the multiplicity of any irreducible module in a cell (or standard) module of
$\mathscr B_{r, s}$. Such a multiplicity is called a decomposition number of $\mathscr B_{r, s}$.

The aim of this paper is to compute  decomposition numbers of $\mathscr B_{r, s}$ over $\mathbb C$.
Recently,  various authors have used a variety of techniques to determine decomposition
numbers of Brauer-type algebras. Predominantly this has been via internal considerations~\cite{CV, CV1, Mar, XX}.  But the current
paper is more in the spirit of Donkin-Tange~\cite{DT}, in that it relates these numbers to a Hecke or quantum group
setting via Schur-Weyl duality.

By our result on the semisimplicity of quantized walled Brauer algebras in \cite{Rsong}, we need to compute  decomposition numbers of
$\mathscr B_{r, s}$ under the assumptions either $\rho^2\in q^{2\mathbb Z}$ or not.  In the first case, we classify singular vectors of mixed tensor
product of natural module and its dual over $\U_q(\mathfrak {gl}_n)$ over $\kappa$.  Via the explicit description on such singular vectors, we establish relationships between Weyl modules, partial
tilting modules of  rational $q$-Schur algebras and cell modules,  principle indecomposable modules of $\mathscr B_{r, s}$.  This proves that  decomposition numbers of  $\mathscr B_{r, s}$ can be determined via those for
 (rational) $q$-Schur algebras.  In the second case, we  use Schur functors in \cite[\S4]{Rsong} to set up relationship between
decomposition numbers of $\mathscr B_{r, s}$ and those for Hecke algebras associated to symmetric groups. This enables us to  use Ariki and Varagnolo-Vasserot's results~\cite{Ar, VV} to
compute decomposition numbers of $\mathscr B_{r, s}$  via
the values of inverse Kazhdan-Lusztig  polynomials at $q=1$ when  the ground field is $\mathbb C$.  As a by-product, we  give some partial results on blocks of $\mathscr B_{r, s}$ over a field $\kappa$.

We organize our paper as follows. In section~2, we recall the
definition of $\mathscr B_{r,s}$ and give some  of its properties from \cite{Rsong}.  In section~3, we establish explicit relationship between the decomposition numbers of
$\mathscr B_{r, s}$ over $\kappa$ and those for Hecke algebras associated to certain symmetric groups
  under the assumption that $\rho^2\not\in q^{2\mathbb Z}$.
In section~4, we classify singular vectors of mixed tensor product of natural module and its dual over quantum general linear group $\U_q(\mathfrak {gl}_n)$ over
a field $\kappa$. Via such results, we  set up relationship between decomposition numbers of $\mathscr B_{r, s}$ over $\kappa$ with  $\rho^2\in q^{2\mathbb Z}$ and those for (rational) $q$-Schur algebras  in section~5.
When the ground field is $\mathbb C$, by using Ariki~\cite{Ar}, Varagnolo-Vasserot's results~\cite{VV} on the decomposition numbers for Hecke algebras and $q$-Schur algebras, we obtain the decomposition numbers of $\mathscr B_{r, s}$ no matter whether $q$ is a root of unity or not. By the way, we will also  give some partial results on  blocks of
$\mathscr B_{r,s}$ over $\kappa$.

\section{The quantized walled Brauer  algebra}
In this section, we recall the definition of quantized walled Brauer algebras and state some of its properties from \cite{Rsong}.

Let $\mathcal Z=\mathbb Z[q, q^{-1}]$ be  the ring of Laurent polynomials
in indeterminate $q$.     The Hecke algebra  $\mathscr H_r$  associated to symmetric group $\mathfrak S_r$  is an associative algebra over $\mathcal Z $,  with generators
$g_1, g_2, \cdots, g_{r-1}$ subject to the defining relations: $(g_i-q) (g_i+q^{-1})=0$, $1\le i\le r-1$, and  $g_ig_j=g_jg_i$, if  $|i-j|>1$, and
$g_ig_{i+1}g_i=g_{i+1}g_ig_{i+1}$, $1\le i<r-1$.

Let  $R$ be the localization of $\mathbb Z[q, q^{-1}, \rho,
\rho^{-1}]$ at $q-q^{-1}$, and  let
 $\delta=(\rho-\rho^{-1})(q-q^{-1})^{-1}\in R$.
Fix $r,s \in \mathbb Z^{>0}$. The \textsf{quantized walled Brauer algebra}
${\mathscr{B}}_{r,s}$~\cite{Le} is the associative $R$-algebra with generators
$e_1, g_i, g_j^*$,  $1\le i\le r-1$ and  $1\le j\le s-1$
such that $g_i$'s  are generators of  $\mathscr H_r$ and  $ g_j^*$'s  are generators of  $\mathscr H_s$. Further, the following equalities hold if they make sense:

\begin{multicols}{2}
\begin{enumerate}
    \item $g_ie_1=e_1g_i$, $g^*_ie_1=e_1g^*_i$, $i\neq 1$,
\item $e_1g_1e_1 =\rho e_1=e_1g^*_1e_1$,
\item $e_1^2=\delta e_1$, \item  $g_i g^*_j=g^*_j g_i$,
\item $e_1{g_{1}}^{-
1}g^*_{1} e_1g_{ 1} = {e_1}{g_{1}}^{- 1}g^*_{1 } {e_1}g^*_{1}$,\item
${g_{1}}{e_1}{g_{ 1}}^{ - 1}g^*_{1} e_1 = g^*_{1} e_1{g_{ 1}}^{ -
1}g^*_{1} {e_1}$.
\end{enumerate}
\end{multicols}

\begin{rem}\label{DipperS} In section~4, we will use Dipper-Doty-Stoll's presentation for $\mathscr B_{r, s}$ so as to use their  result in   \cite{DDS1, DDSII} to
discuss singular vectors of mixed tensor products of quantum general linear groups. In that case, $\rho$ and $q$ in Dipper-Doty-Stoll's presentation is  $q^{-1}$ and $\rho^{-1}$ in the
 current  definition of $\mathscr B_{r, s}$,
 respectively.

   \end{rem}

\begin{Lemma}\cite{Enyang2} \label{inv} There is an $R$-linear
anti-involution $\sigma$ on $\mathscr{B}_{r,s}$ which fixes all
generators  $e_1,  g_i$ and $g_j^*$,  $1\le i\le r-1$ and $1\le j\le
s-1$. \end{Lemma}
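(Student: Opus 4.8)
The plan is to read $\sigma$ directly off the presentation of $\mathscr B_{r,s}$ recorded above. Write $\mathscr B_{r,s}=\mathscr F/\mathscr I$, where $\mathscr F$ is the free associative $R$-algebra on the symbols $e_1$, $g_1,\dots,g_{r-1}$, $g_1^*,\dots,g_{s-1}^*$ and $\mathscr I$ is the two-sided ideal generated by the defining relators, every occurrence of $g_1^{-1}$ being read as the element $g_1-(q-q^{-1})$ (this is the inverse of $g_1$ modulo $\mathscr I$ by the Hecke quadratic relation $(g_1-q)(g_1+q^{-1})=0$). There is a unique $R$-linear, anti-multiplicative map $\bar\sigma\colon\mathscr F\to\mathscr F$ fixing every generator: it reverses each monomial and is extended $R$-linearly, and clearly $\bar\sigma^2=\mathrm{id}_{\mathscr F}$. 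To obtain $\sigma$ on $\mathscr B_{r,s}$ it then suffices to prove $\bar\sigma(\mathscr I)\subseteq\mathscr I$, and since $\bar\sigma$ reverses products this reduces to checking that $\bar\sigma$ carries each defining relator into $\mathscr I$, i.e.\ that applying $\bar\sigma$ to each defining relation yields a consequence of the defining relations.

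Most of these checks are immediate. The Hecke quadratic relators are polynomials in a single generator, hence fixed by $\bar\sigma$; a braid relator $g_ig_{i+1}g_i-g_{i+1}g_ig_{i+1}$ is fixed because each monomial is a palindrome; a commutation relator $g_ig_j-g_jg_i$ (with $|i-j|>1$) is carried to its negative; and the same holds verbatim for the starred generators. Relation (1) gives $\bar\sigma(g_ie_1-e_1g_i)=e_1g_i-g_ie_1$, again a relator (similarly for $g_i^*$); relation (2) has palindromic left-hand sides $e_1g_1e_1$ and $e_1g_1^*e_1$, so its relators are fixed; $\bar\sigma$ fixes $e_1^2-\delta e_1$ in (3); and (4) gives $\bar\sigma(g_ig_j^*-g_j^*g_i)=g_j^*g_i-g_ig_j^*$. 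So every one of these relations is preserved.

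The one place needing a little bookkeeping is the pair (5)--(6), because of the appearance of $g_1^{-1}$. Writing $h:=g_1-(q-q^{-1})$, one has $\bar\sigma(h)=h$, and from relation (4) (with $i=j=1$) the elements $h$ and $g_1^*$ commute modulo $\mathscr I$. Applying $\bar\sigma$ to the relator of (5), namely $e_1 h g_1^* e_1 g_1 - e_1 h g_1^* e_1 g_1^*$, gives $g_1 e_1 g_1^* h e_1 - g_1^* e_1 g_1^* h e_1$, which modulo $\mathscr I$ (pulling $h$ back past $g_1^*$) equals $g_1 e_1 h g_1^* e_1 - g_1^* e_1 h g_1^* e_1$ --- precisely the relator of (6); symmetrically, $\bar\sigma$ carries the relator of (6) to that of (5). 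Hence $\bar\sigma$ permutes the defining relators of $\mathscr B_{r,s}$ (fixing all of them except interchanging (5) and (6)), so $\bar\sigma(\mathscr I)=\mathscr I$ and $\bar\sigma$ descends to an $R$-linear anti-automorphism $\sigma$ of $\mathscr B_{r,s}$ fixing $e_1$, $g_i$ and $g_j^*$. Finally $\sigma^2$ is an $R$-algebra endomorphism of $\mathscr B_{r,s}$ fixing all the generators, hence equals $\mathrm{id}$, so $\sigma$ is an anti-involution. The only real obstacle is this last verification for (5)--(6): one has to notice that $g_1^{-1}$ is a polynomial in $g_1$ and use the commutation relation (4), after which the two relations simply exchange under $\bar\sigma$.
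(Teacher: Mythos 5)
Your proof is correct. The paper itself gives no argument for this lemma --- it is simply quoted from [Enyang2] --- and your direct verification from the presentation is exactly the standard way to establish it: the only non-trivial point is that reversal interchanges relations (5) and (6), which you handle correctly by observing that $g_1^{-1}=g_1-(q-q^{-1})$ is a palindromic polynomial in $g_1$ and commutes with $g_1^*$ modulo the defining ideal by relation (4).
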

It is proved in \cite{Enyang2} that $\mathscr B_{r, s}$ is cellular over $R$ in the sense of \cite{GL}. In particular, the rank of $\mathscr B_{r, s}$ is
$(r+s)!$. For any field $\kappa$ which is an $R$-algebra, let $\mathscr B_{r, s, \kappa}=\mathscr B_{r, s}\otimes_R \kappa$.

Let $e$ be the least positive integer such that $1+q^2+\cdots+q^{2(e-1)}=0$ in $\kappa$. If there is no such a positive integer, i.e., $q^2\in \kappa$ is not a root of unity, we set $e=\infty$. The following result has been proved by the authors in \cite{Rsong}.

\begin{Theorem}\label{semimain}\cite[Theorem~6.10]{Rsong} Suppose  $r, s\in \mathbb Z^{>0}$.  Then $\mathscr B_{r, s, \kappa}$ is (split)
semisimple   if and only if $e>\max\{r, s\}$ and one of
the following conditions holds:
\begin{enumerate} \item   $\rho^2\neq
q^{2a}$ for any $a\in \mathbb Z$ with $|a|\le r+s-2$ if $\delta\neq 0$;
\item    $(r, s)\in \{(1,2), (2,1), (1, 3), (3,
1)\}$ if $\delta=0$.
\end{enumerate}
\end{Theorem}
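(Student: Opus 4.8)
The plan is to base everything on the cellular structure of $\mathscr B_{r,s}$ over $R$ from \cite{Enyang2}. After base change one obtains a cellular algebra $\mathscr B_{r,s,\kappa}$ whose cell modules $\Delta(f,\lambda,\mu)$ are indexed by triples with $0\le f\le\min\{r,s\}$, $\lambda\vdash r-f$, $\mu\vdash s-f$, each carrying a symmetric bilinear form built from the anti-involution $\sigma$ of Lemma~\ref{inv}. By the Graham--Lehrer theory \cite{GL}, $\mathscr B_{r,s,\kappa}$ is split semisimple exactly when the radical of this form vanishes on every cell module, equivalently when every cell module is irreducible and no two are isomorphic. Thus the whole statement becomes a question of when the associated Gram forms are nondegenerate, and the conditions on the right-hand side should emerge as the nonvanishing of the scalar factors occurring in the relevant Gram determinants.

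First I would establish necessity. The $g_i$ generate a copy of $\mathscr H_r$ and the $g_j^\ast$ a copy of $\mathscr H_s$ inside $\mathscr B_{r,s,\kappa}$, so semisimplicity of $\mathscr B_{r,s,\kappa}$ forces semisimplicity of both Hecke algebras, hence $e>\max\{r,s\}$ by the classical type-$A$ criterion. Granting $e>\max\{r,s\}$, one must still show that if condition (a) fails (when $\delta\ne0$) or condition (b) fails (when $\delta=0$) then some cell module is reducible. I would pass to a cell module that contains at least one arc, restrict down the tower $\mathscr B_{r,s}\supset\mathscr B_{r-1,s-1}\supset\cdots\supset\mathscr B_{r-m,s-m}$ with $m=\min\{r,s\}$ until the offending factor becomes visible, and exhibit an explicit vector in the radical of the Gram form of $\Delta(f,\lambda,\mu)$; in the natural diagram basis the Gram matrix carries among its entries the scalars $\delta$ and $\rho^{2}-q^{2a}$ for $|a|\le r+s-2$, so the vanishing of any one of them makes the form degenerate. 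The case $\delta=0$ (which is the $a=0$ instance of $\rho^{2}=q^{2a}$, since $\delta=0\iff\rho^{2}=1$) is genuinely different because then $e_1$ is nilpotent rather than a scalar multiple of an idempotent; here I would run the small-rank analysis, which is what singles out the list $\{(1,2),(2,1),(1,3),(3,1)\}$.

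For sufficiency, assume $e>\max\{r,s\}$ and condition (a) if $\delta\ne0$ (resp.\ condition (b) if $\delta=0$). One natural route is induction on $r+s$ via the restrictions $\mathscr B_{r,s}\downarrow\mathscr B_{r-1,s}$ and $\mathscr B_{r,s}\downarrow\mathscr B_{r,s-1}$: each cell module restricts with a cell filtration, the branching is multiplicity free under our hypotheses, and the single new Gram-determinant factor created in going from $\mathscr B_{r-1,s-1}$ to $\mathscr B_{r,s}$ is, up to a power of $q$, a product of quantum integers $[b]$ with $b\le\max\{r,s\}$ (from the Hecke subalgebras $\mathscr H_{r-f}$, $\mathscr H_{s-f}$) and of scalars $\delta$ and $\rho^{2}-q^{2a}$ with $|a|\le r+s-2$ (from the arcs), each a unit in $\kappa$ under the hypotheses; so all cell modules remain irreducible and the algebra is semisimple. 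An equivalent, cleaner route is to construct Jucys--Murphy elements $L_1,\dots,L_r$ and $L_1^\ast,\dots,L_s^\ast$ in $\mathscr B_{r,s}$, show that in this range they act semisimply on each cell module with eigenvalues prescribed by contents of boxes together with the parameter $\rho$, and verify that the listed arithmetic conditions are precisely the ``separation condition'', which through the Jucys--Murphy theory for cellular algebras forces every cell module to be irreducible and $\mathscr B_{r,s,\kappa}$ to be semisimple.

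The hard part is the sufficiency direction, and within it the bookkeeping of the arc contribution to the Gram determinant: one must prove that the only irreducible scalar factors that can arise are $\delta$ and the $\rho^{2}-q^{2a}$ with $|a|\le r+s-2$, together with quantum integers already controlled by $e>\max\{r,s\}$, and that when $\delta=0$ the form stays degenerate outside the four exceptional pairs. It is this degenerate $\delta=0$ case, where the diagram calculus breaks down, rather than the generic computation, that will require the most care.
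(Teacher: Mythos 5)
The paper does not prove this statement: it is imported verbatim as Theorem~6.10 of \cite{Rsong}, so there is no internal proof to compare your argument against. Judged on its own terms, your outline follows the standard cellular/Gram-form strategy that is indeed the one used for algebras of this type (and in \cite{Rsong} itself): reduce semisimplicity to nondegeneracy of the invariant forms on the cell modules, extract the Hecke-algebra condition $e>\max\{r,s\}$, and locate the parameters $\rho^2-q^{2a}$ in the Gram determinants via restriction along the tower. Your observation that $\delta=0$ is exactly the case $\rho^2=q^{0}$, which is why it must be treated separately and why $e_1$ becomes nilpotent, is correct and is the right way to see why the exceptional list appears.

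There are, however, concrete gaps. First, your necessity step begins with ``the $g_i$ generate a copy of $\mathscr H_r$ \dots so semisimplicity of $\mathscr B_{r,s,\kappa}$ forces semisimplicity of both Hecke algebras'': a subalgebra of a semisimple algebra need not be semisimple, so this inference is invalid as stated. It can be repaired, but only by using that $\mathscr H_r\otimes\mathscr H_s$ is the \emph{quotient} of $\mathscr B_{r,s,\kappa}$ by the two-sided ideal generated by $e_1$ (the $f=0$ layer of the cell filtration), and quotients of semisimple algebras are semisimple; this is a different argument from the one you wrote. Second, the entire sufficiency direction — which you correctly identify as the hard part — is left as a plan: the assertion that the only scalar factors arising in the Gram determinants are $\delta$, the $\rho^{2}-q^{2a}$ with $|a|\le r+s-2$, and quantum integers $[b]$ with $b\le\max\{r,s\}$ is essentially equivalent to the theorem itself and is nowhere established; likewise the multiplicity-freeness of the branching and the ``separation condition'' for the Jucys--Murphy eigenvalues are invoked but not verified. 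Third, the $\delta=0$ case is deferred to an unspecified ``small-rank analysis'' without explaining why semisimplicity fails for all $(r,s)$ outside $\{(1,2),(2,1),(1,3),(3,1)\}$ (for instance why $(1,1)$ and $(2,2)$ fail but $(1,3)$ does not), which is the only place the exceptional list could come from. As it stands the proposal is a credible roadmap, not a proof.
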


When  we
classify singular vectors in mixed tensor products of natural module and its dual over quantum general linear groups,
we will need explicit description of the cellular basis of $\mathscr B_{r, s}$ in \cite[Theorem~3.7]{Rsong} as follows.
 We need some preparations before we state it.

A composition $\lambda$ of $n$ with at most  $d$ parts is a sequence
of non--negative integers $\lambda=(\lambda_1,\lambda_2,\dots,
\lambda_d)$ such that $|\lambda|:=\sum_{i=1}^d\lambda_i=n$.  If
$\lambda_i\ge \lambda_{i+1}$,  $ 1\le i\le d-1$, then $\lambda$ is
called a partition of $n$ with at most $d$ parts. Let $\Lambda(d,
n)$ (resp. $\Lambda^+(d, n)$)  be the set of all compositions (resp.
partitions)  of $n$ with at most $d$ parts.  We also use
$\Lambda^+(n)$ to denote the set of all partitions of $n$. It is
known that $\Lambda^+(d, n)$ is a poset with dominant order
$\trianglelefteq$ as a partial order on it. More explicitly, $
\lambda\trianglelefteq \mu$  for $\lambda, \mu\in \Lambda^+(d, n)$
if $ \sum_{j=1}^i \lambda_j\le \sum_{j=1}^i \mu_j$ for all possible
$i\le d$. Write $\lambda\vartriangleleft \mu$ if
$\lambda\trianglelefteq \mu$ and $\lambda\ne \mu$.

Let $\lambda=(\lambda_1, \lambda_2,...)\in \Lambda^+(n)$.  The Young diagram $[\lambda]$ is a
collection of boxes (or nodes) arranged in left-justified rows with $\lambda_i$
boxes in the $i$-th row of $[\lambda]$. We use $(i, j)$ to denote the box $p$ if  $p$ is in $i$-th row and $j$-th column.
A box $(i, \lambda_i)$ (resp.,  $(i, \lambda_i+1)$)  is called a removable (resp., an addable ) node of $\lambda$ (or $[\lambda]$)
 if $\lambda_{i}-1\ge \lambda_{i+1}$ (resp. $\lambda_{i-1}\ge \lambda_{i}+1$).   Let $\mathscr R(\lambda)$ (resp., $\mathscr A(\lambda)$) be the
 set of all removable (resp.,  addable ) boxes of $\lambda$. We  use $\lambda\setminus p$ to denote the partition obtained from $\lambda$ by removing the removable node $p$.
 Similarly, we use $\lambda\cup p$ to denote the partition obtained from $\lambda$ by adding  the addable  node $p$.

 A $\lambda$-tableau $\s$ is
obtained by inserting elements $i$ with $ 1\le i\le d$ into $[\lambda]$.
Let $\mu_i$ be the number of $i$ appearing in $\s$. Then $\mu=(\mu_1, \mu_2, \cdots, \mu_d)\in \Lambda(d, n)$.
In this case, $\s$ is called a $\lambda$-tableau of type $\mu$. If the entries of $\s$ increase strictly down the columns  and weakly
increase along the rows, then $\s$ is called a semistandard $\lambda$-tableau of type $\mu$. If we switch the role between columns and rows
of $\s$, then $\s$ is called a column semistandard $\lambda$-tableau of type $\mu$.  Let $\omega=(1,1, \cdots, 1)\in \Lambda^+(n)$.
 A $\lambda$-tableau $\s$ is said to be standard if and only if it is a semi-standard $\lambda$-tableau of type $\omega$.
  Let $\Std(\lambda)$ be the
set of all standard $\lambda$-tableaux.

Now, we focus on $\lambda$-tableaux $\s$ of type $\omega$. Such tableaux will be called $\lambda$-tableaux.  The symmetric group  $\mathfrak S_n $  acts on
$\s$ by permuting its entries. Let $\t^\lambda$ (resp.
$\t_{\lambda}$) be the $\lambda$-tableau obtained from $[\lambda]$ by adding $1, 2, \cdots, n$ from left to right
along the rows (resp. from top to bottom along the columns). For
example, if $\lambda=(4,3,1)$, then
\begin{equation}\label{tla}
\t^{\lambda}=\young(1234,567,8), \quad \text{ and }
\t_{\lambda}=\young(1468,257,3).\end{equation}
We write $w=d(\s)$ if $\t^\lambda w=\s$. Then  $d(\s)$ is
uniquely determined by $\s$.

 Fix $r$ and $s$ and let
\begin{equation}\label{poset}
 \Lambda_{r,s} = \left\{ (f,\lambda )| \lambda \in \Lambda_{r, s}^f, 0\le f \le \min \{ r,s \} \right\},\end{equation}
 where $\Lambda_{r,s}^f =\Lambda^+(r-f)\times \Lambda^+(s-f)$. So,
 each $\lambda\in \Lambda_{r,s}^f$ is of form $(\lambda^{(1)},
 \lambda^{(2)})$. We say that $(f,\lambda)\unrhd (\ell, \mu)$ if either $f>\ell$ or
$f=\ell$ and $\lambda\unrhd \mu$ in the sense
$\lambda^{(i)}\unrhd\mu^{(i)}$, $i=1,2$.  We write
 $(f,\lambda)\rhd (\ell,\mu)$ if $(f,\lambda)\unrhd (\ell,\mu)$
 and  $(f,\lambda)\neq(\ell,\mu)$.
 Then  $\Lambda_{r,s}$ is a poset.

Given a $\lambda\in \Lambda_{r, s}^f$, we define
$\t^\lambda=(\t^{\lambda^{(1)}}, \t^{\lambda^{(2)}})$ where
$\t^{\lambda^{(1)}}$ and $\t^{\lambda^{(2)}}$  are defined similarly
as (\ref{tla}). The only difference is that we have to use $f+i$
instead of $i$ in (\ref{tla}). Similarly, we have $\t_\lambda$.

\begin{example} Suppose $(r, s)=(2, 7)$, $f=1$ and  $(\lambda^{(1)},\lambda^{(2)}) =((1), (3,2,1))$.  We have
\begin{equation}\label{tlar} \t^{\lambda}=\left (\young(2), \quad \young(234,56,7)\right)
\quad \text{and} \quad \t_{\lambda}=\left (\young(2), \quad \young(257,36,4)\right).\end{equation}
\end{example}
For each $\lambda\in \Lambda^f_{r, s}$, let $\Std(\lambda^{(i)})$ be
the set of standard $\lambda^{(i)}$-tableaux which are obtained from
usual standard tableaux by using $f+j$ instead of $j$.  Let $\Std(\lambda)=\Std(\lambda^{(1)})\times \Std(\lambda^{(2)})$.

 For each partition $\lambda$ of $n$, let $\mathfrak S_\lambda$ be  the Young subgroup of $\mathfrak S_n$
 with respect to $\lambda$. Let  $\n_{\lambda}=\sum_{w\in \mathfrak S_{\lambda} }(-q)^{-\ell(w)}  g_w
$ and let $\m_\lambda= \sum_{w\in \mathfrak S_{\lambda} }q^{\ell(w)}  g_w$. Then $\n_\lambda g_i=-q^{-1} \n_\lambda$ and  $\m_\lambda g_i=q \m_\lambda$, if $s_i\in \mathfrak S_\lambda$.

Recall that  $\sigma$ is the anti-involution on $\mathscr B_{r, s}$   given in Lemma~\ref{inv}. If
$\s, \t\in \Std(\lambda)$ with $\s=(\s_1, \s_2)$ and $\t=(\t_1,
\t_2)$, we define
\begin{equation}\label{nst}
\n_{\s\t}=\sigma(g_{d(\s)} ) \n_\lambda  g_{d(\t)},
\end{equation}
where $\n_\lambda=\n_{\lambda^{(1)}}
\n_{\lambda^{(2)}}$, $g_{d(\s)}=g_{d(\s_1)} g^*_{d(s_2)}$, $g_{d(\t)}=g_{d(\t_1)} g^*_{d(t_2)}$,
$d(\s)=d(\s_1) d(\s_{2})$, and $d(\t)=d(\t_1) d(\t_{2})$.
We remark that we use $s_1, \cdots, s_{r-1}$ and $s_1^*, \cdots, s_{s-1}^*$ to denote generators of $\mathfrak S_r$ and $\mathfrak S_s$, respectively.

Fix $r, s\in \mathbb Z^{>0}$ and $f\in \mathbb N$  with $f\le \min \{r, s\}$. Let
$$ \mathscr{D}_{r,s}^f=\{ s_{f,i_f} s^*_{f, j_f} \cdots
s_{1,i_1}s^*_{1,{j_1}}| ~ k \le {j_k},
 1 \le i_1< i_2 < \cdots <i_f\le r \}.$$
For each $(f, \lambda)\in \Lambda_{r, s}$, we define  $I(f,
\lambda)=\Std(\lambda)\times \mathscr D^f_{r, s}$.

In \cite{Rsong}, we defined $e_{ i, j} = g_{ 1, i }^{-1}
 g_{j, 1 }^*  e_1{g_{ 1, i}} ({g^*_{ j,1}})^{-1}$, for all $1\le i\le r$ and $1\le j\le s$. If $i=j$, we denote $e_{i,j}$ by $e_i$. For any positive
 integer $f\le \min\{r, s\}$, let $e^f=e_1e_2\cdots e_f$. If $f=0$, we denote $e^f$ by $1$.

For any $(\s
,e), (\t, d)\in I(f, \lambda)$, we define
\begin{equation}\label{cellbasis} C_{(\s, e)(\t, d)}=\sigma(g_e)
e^f\n_{\s\t}g_d.\end{equation}

The following result, which has been proved in \cite[Theorem~3.7]{Rsong} can also be obtained from \cite[Theorem~6.13]{ Enyang2}\footnote{Enyang~\cite{Enyang2} has proved that any cellular basis
of Hecke algebras can be lifted to get a cellular basis of $\mathscr B_{r, s}$.}

\begin{Theorem} \label{celb}Let $\mathscr
{B}_{r,s}$ be the quantized walled Brauer algebra over $R$. Then
$\mathcal C$ is a cellular $R$-basis of $\mathscr {B}_{r,s}$ over
the poset $\Lambda_{r, s}$ in the sense of \cite{GL}, where $$\mathcal C=\cup_{(f, \lambda)\in
\Lambda_{r,s}} \{C_{(\s, e)(\t, d)}\mid (\s, e), (\t, d) \in I(f,
\lambda)\}.$$  The required anti-involution $\sigma$ is the one
given in Lemma~\ref{inv}.\end{Theorem}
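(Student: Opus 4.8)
The plan is to follow the standard strategy for establishing cellularity of diagram-type algebras, adapted to the walled Brauer setting: rather than verifying the axioms of \cite{GL} directly from scratch, I would bootstrap from the known cellular structure of the Hecke algebras $\mathscr H_r$ and $\mathscr H_s$ and the ``iterated inflation'' philosophy, which is precisely what Enyang's footnoted result makes available. First I would fix $(f,\lambda)\in\Lambda_{r,s}$ and argue that the elements $C_{(\s,e)(\t,d)}$ with $(\s,e),(\t,d)\in I(f,\lambda)$, together with all basis elements indexed by strictly larger pairs, span a two-sided ideal; the key point is that $e^f=e_1e_2\cdots e_f$ absorbs the relevant generators up to lower (i.e.\ $\unrhd$ with respect to more ``$e$''s, hence larger $f$) terms. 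This uses the defining relations (1)--(6), especially $e_1^2=\delta e_1$, $e_1g_1e_1=\rho e_1$, and the commutation relations, together with the conjugated elements $e_{i,j}$ introduced just before \eqref{cellbasis}.

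The second step is the count. By Theorem~\ref{semimain}'s preamble the rank of $\mathscr B_{r,s}$ over $R$ is $(r+s)!$, so it suffices to show $\mathcal C$ has exactly $(r+s)!$ elements and is spanning (linear independence then follows). The cardinality of $I(f,\lambda)=\Std(\lambda)\times\mathscr D^f_{r,s}$ is $|\Std(\lambda^{(1)})|\cdot|\Std(\lambda^{(2)})|\cdot|\mathscr D^f_{r,s}|$, and $|\mathscr D^f_{r,s}|=\binom{r}{f}\binom{s}{f}f!^2\big/$ — more precisely one computes $|\mathscr D^f_{r,s}|=\frac{r!}{(r-f)!}\cdot\frac{s!}{(s-f)!}$ from the description $s_{f,i_f}s^*_{f,j_f}\cdots s_{1,i_1}s^*_{1,j_1}$ with $1\le i_1<\cdots<i_f\le r$ and $k\le j_k$. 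Summing $\sum_{f}\binom{\#\text{terms}}{}$ over $0\le f\le\min\{r,s\}$ and over $\lambda\in\Lambda^f_{r,s}$, and using $\sum_{\lambda\in\Lambda^+(m)}|\Std(\lambda)|^2=m!$ twice, one recovers $(r+s)!$ by a Vandermonde-type identity. I would present this as a lemma-style computation rather than grind it.

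The third step — the actual cellular axiom — is to show that for each generator $x\in\{e_1,g_i,g_j^*\}$ and each $(\s,e)\in I(f,\lambda)$,
\[
x\,C_{(\s,e)(\t,d)}\equiv\sum_{(\u,b)\in I(f,\lambda)} r_{x,(\s,e)}^{(\u,b)}\,C_{(\u,b)(\t,d)}\pmod{\mathscr B_{r,s}^{\rhd(f,\lambda)}},
\]
with coefficients $r_{x,(\s,e)}^{(\u,b)}\in R$ independent of $(\t,d)$, together with the anti-involution compatibility $\sigma(C_{(\s,e)(\t,d)})=C_{(\t,d)(\s,e)}$. The latter is immediate from Lemma~\ref{inv} and the definition \eqref{cellbasis}, since $\sigma$ fixes $e^f$ and $\sigma(\n_{\s\t})=\n_{\t\s}$. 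For the former, when $x=g_i$ or $g_j^*$ this reduces, after moving $x$ past $\sigma(g_e)$ and $e^f$, to the known cellular action of the Hecke algebra on $\n_\lambda g_{d(\t)}$ (Murphy's basis for $\mathscr H_r\otimes\mathscr H_s$), which is where the Hecke-algebra cellularity is imported; the passage of $x$ past $\sigma(g_e)$ and $e^f$ produces either a term of the same shape or a term lying in $\mathscr B_{r,s}^{\rhd(f,\lambda)}$ (an extra $e$ appears). The genuinely delicate case is $x=e_1$: here one must show that $e_1\sigma(g_e)e^f$ either collapses to $\sigma(g_b)e^f$ for some $b$ (times a scalar in $\{1,\delta,\rho,\rho^{-1}\}$ and a Hecke element) or else creates an $f{+}1$-st ``$e$'', landing in the higher ideal. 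This is the main obstacle, and it is exactly the computation that Enyang~\cite{Enyang2} and the authors in \cite[Theorem~3.7]{Rsong} carried out; accordingly my proof would either cite those computations verbatim or, for a self-contained version, reduce to a finite check in $\mathscr B_{f+1,f+1}$ using relations (2),(3),(5),(6) and the explicit conjugates $e_{i,j}$ to handle how $e_1$ interacts with the ``propagating'' part recorded by $e\in\mathscr D^f_{r,s}$. Once these three ingredients are in place, the statement that $\mathcal C$ is a cellular basis over the poset $\Lambda_{r,s}$ with anti-involution $\sigma$ follows directly from the definition in \cite{GL}.
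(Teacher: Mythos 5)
The first thing to say is that the paper contains no proof of Theorem~\ref{celb}: the statement is imported verbatim from \cite[Theorem~3.7]{Rsong}, with the remark that it also follows from \cite[Theorem~6.13]{Enyang2}. So there is no in-paper argument to compare yours against. Your outline --- filtration of $\mathscr B_{r,s}$ by the number of $e$'s, a dimension count against $\rank_R\mathscr B_{r,s}=(r+s)!$, straightening of the generator action modulo $\mathscr B_{r,s}^{\rhd(f,\lambda)}$ by importing the Murphy basis of $\mathscr H_{r-f}\otimes\mathscr H_{s-f}$, and $\sigma$-symmetry --- is indeed the standard strategy those references follow, and you correctly identify the interaction of $e_1$ with $\sigma(g_e)e^f$ as the one genuinely delicate point.

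Two concrete problems remain. First, your count of $\mathscr D^f_{r,s}$ is wrong by a factor of $f!$: the indices $i_1<\cdots<i_f$ are required to be strictly increasing, whereas the $j_k$ are only constrained by $j_k\ge k$, so $|\mathscr D^f_{r,s}|=\binom{r}{f}\cdot\frac{s!}{(s-f)!}$, not $\frac{r!}{(r-f)!}\cdot\frac{s!}{(s-f)!}$. With your value the total $\sum_f(r-f)!\,(s-f)!\,|\mathscr D^f_{r,s}|^2$ already gives $36$ instead of $24$ for $r=s=2$, so the dimension-count step as written fails; with the corrected value one gets $r!\,s!\sum_f\binom{r}{f}\binom{s}{f}=(r+s)!$ by Vandermonde, exactly as you anticipated. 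Second, the step you rightly call the main obstacle is not actually carried out: ``cite those computations verbatim'' cites the very result being proved, and the fallback reduction to ``a finite check in $\mathscr B_{f+1,f+1}$'' is not justified, because $e_1\sigma(g_e)e^f$ with $e\in\mathscr D^f_{r,s}$ involves the conjugating elements $g_{1,i}$ and $g^*_{j,1}$ for $i,j$ ranging up to $r$ and $s$, and one must genuinely verify, using the defining relations of $\mathscr B_{r,s}$ and the elements $e_{i,j}$, that the product either reproduces a basis element of the same $(f,\lambda)$-layer with coefficients independent of $(\t,d)$ or acquires an extra $e$ and falls into $\mathscr B_{r,s}^{\rhd(f,\lambda)}$. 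As it stands the proposal is a correct road map rather than a proof.
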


Recall that $\kappa$ is a field which is an $R$-algebra and $\mathscr B_{r, s, \kappa}=\mathscr B_{r, s}\otimes_R \kappa$.
In this paper, we consider right $\mathscr B_{r, s, \kappa}$-modules.
By standard results on the representations of cellular algebras in \cite{GL},   we have the right cell module
$C(f,\lambda)$ for each  $(f, \lambda)\in \Lambda_{r, s}$, which is
spanned by $\{e^f\n_{\t^\lambda \s} g_d+
\mathscr{B}_{r,s, \kappa}^{\rhd(f,\lambda)}|(\s, d) \in \Std(\lambda) \times
\mathscr{D}_{r,s}^f\}$ as $\kappa$-space, where $\mathscr{B}_{r,s, \kappa}^{\rhd(f,\lambda)}$ is a subspace of $\mathscr B_{r, s, \kappa}$ spanned by $\cup_{(\ell, \mu)\in
\Lambda_{r,s}} \{C_{(\s, e)(\t, d)}\mid (\s, e), (\t, d) \in I(\ell,
\mu)\}$ with $(f, \lambda)\lhd (\ell, \mu)$. In fact,  $\mathscr{B}_{r,s, \kappa}^{\rhd(f,\lambda)}$ is a two-sided ideal of $\mathscr B_{r, s, \kappa}$.

For bipartition $\lambda=(\lambda^{(1)}, \lambda^{(2)})$, let $\lambda'=(\mu^{(1)}, \mu^{(2)})$ where
$\mu^{(i)}$ is the conjugate of  $\lambda^{(i)}$ for $i=1, 2$. We call $\lambda'$ the conjugate of  $\lambda$.
We set $\m_\lambda=\m_{\lambda^{(1)}}\m_{\lambda^{(2)}}$. Note that the current $\m_{\lambda^{(i)}}$ is obtained from
usual one by using $g_{f+j}$ (resp. $g_{f+j}^*$) instead of $g_j$ (resp. $g_j^*$) if $i=1$ (resp. $i=2$).
The following result, which will be needed in section~4, has been proved in \cite{Rsong}.

\begin{Prop} \label{classcell} For each $(f, \lambda)\in \Lambda_{r, s}$, let $\tilde C(f, \lambda):=e^f \m_{\lambda'}g_{d(\t_{\lambda'})}
 \n_\lambda \mathscr B_{r,s} \pmod{ \mathscr \B_{r,s}^{f+1}}$, where
$\mathscr{B}_{r,s}^{f+1}$  is  the two-sided ideal of
$\mathscr{B}_{r,s}$ generated by $e^{f+1}$. As right $\mathscr B_{r, s}$-modules,   $C(f, \lambda)\cong \tilde C(f, \lambda)$.
\end{Prop}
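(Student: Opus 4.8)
The plan is to exhibit an explicit right $\mathscr B_{r,s}$-module isomorphism between the cell module $C(f,\lambda)$, as it is concretely described above via the cellular basis, and the module $\tilde C(f,\lambda)$. The natural candidate map is the one sending the spanning vector $e^f\n_{\t^\lambda\s}g_d + \mathscr B_{r,s,\kappa}^{\rhd(f,\lambda)}$ of $C(f,\lambda)$ to (a scalar multiple of) $e^f\m_{\lambda'}g_{d(\t_{\lambda'})}\n_\lambda g_d \pmod{\mathscr B_{r,s}^{f+1}}$, viewing the latter inside $\tilde C(f,\lambda) = e^f\m_{\lambda'}g_{d(\t_{\lambda'})}\n_\lambda\mathscr B_{r,s} \pmod{\mathscr B_{r,s}^{f+1}}$. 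The key point to make this well-defined and bijective is the classical Hecke-algebra fact (Murphy/Dipper--James style) that, modulo the ideal spanned by more dominant cellular basis elements, $\m_{\lambda'}g_{d(\t_{\lambda'})}\n_\lambda$ differs from $\n_\lambda$ by a unit and a "straightening'' correction; more precisely, inside $\mathscr H_r$ (and likewise $\mathscr H_s$) one has $\m_{\lambda'}g_{d(\t_{\lambda'})}\n_\lambda = \gamma\, \n_{\t^\lambda\t^\lambda} + (\text{lower terms})$ for an invertible scalar $\gamma$, so that the two sets of spanning vectors are related by a triangular, invertible change of basis.

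The steps I would carry out, in order. First I would reduce the problem to the Hecke-algebra level: the elements $e^f$ commute past everything in the relevant way and the quotient by $\mathscr B_{r,s}^{f+1}$ kills exactly the part of $\mathscr B_{r,s}$ below $(f,\lambda)$ coming from a larger value of $f$, while the quotient by $\mathscr B_{r,s,\kappa}^{\rhd(f,\lambda)}$ additionally kills the strictly more dominant bipartitions at the same $f$. So both $C(f,\lambda)$ and $\tilde C(f,\lambda)$ are, after identifying the $e^f$-part, built from the right $(\mathscr H_{r-f}\otimes\mathscr H_{s-f})$-module generated by $\n_\lambda$ (resp. by $\m_{\lambda'}g_{d(\t_{\lambda'})}\n_\lambda$) inside the appropriate subquotient. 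Second, I would invoke the standard result on Specht/cell modules for Hecke algebras of symmetric groups: the map $h\mapsto \m_{\lambda'}g_{d(\t_{\lambda'})}\n_\lambda h$ and the map $h\mapsto \n_\lambda h$ have the same image modulo more-dominant terms, because $\m_{\lambda'}g_{d(\t_{\lambda'})}\n_\lambda \equiv (\text{unit})\cdot \n_\lambda \pmod{\mathscr H^{\rhd\lambda}}$ — this is precisely where one uses that $\n_\lambda \mathscr H \n_\lambda$ (or $\m_{\lambda'}\mathscr H\n_\lambda$) is one-dimensional modulo higher terms, a Dipper--James-type lemma. Third, I would transport this identification through $e^f(-)$ and through the two quotients to get the module isomorphism, checking that the right action of the generators $e_1, g_i, g_j^*$ is respected — this is automatic once the map is realized as right multiplication composed with a linear identification of generators.

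The routine content is the bookkeeping of which ideal kills which basis elements and the verification that multiplying by $e^f$ on the left and reducing mod $\mathscr B_{r,s}^{f+1}$ is compatible with the Hecke-level statement; I would not grind through this, citing instead the cellular structure of Theorem~\ref{celb} and the construction in \cite{Rsong}. The main obstacle, and the step deserving genuine care, is the Hecke-algebra straightening lemma asserting that $\m_{\lambda'}g_{d(\t_{\lambda'})}\n_\lambda$ generates the same cell module as $\n_\lambda$ with an invertible comparison scalar: one must pin down that the "diagonal'' coefficient is a unit in $R$ (not merely nonzero in some specialization), so that the isomorphism holds integrally over $R$ and hence over any field $\kappa$ that is an $R$-algebra. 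This reduces to a known computation of $\gamma_\lambda$-type constants for the Murphy basis, which are products of $q$-integers and powers of $q$, all invertible in $R$ since $R$ is a localization of $\mathbb Z[q,q^{-1},\rho,\rho^{-1}]$; once this is in hand the rest follows formally.
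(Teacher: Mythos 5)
The paper does not actually prove this proposition here --- it is quoted from the companion preprint \cite{Rsong} --- so I can only judge your argument on its own terms, and it contains a genuine error at exactly the step you identify as the crucial one. You claim that $\m_{\lambda'}g_{d(\t_{\lambda'})}\n_\lambda=\gamma\,\n_{\t^\lambda\t^\lambda}+(\text{more dominant terms})$ with $\gamma$ a \emph{unit} of $R$, so that the two spanning sets are related by an invertible triangular change of basis. This is false. Take a component $\lambda^{(i)}=(1,1)$ in $\mathscr H_2$: then $(\lambda^{(i)})'=(2)$, $d(\t_{(2)})=1$, $\n_{(1,1)}=1$ (the Young subgroup $\fS_{(1,1)}$ is trivial), and
$$\m_{(2)}\n_{(1,1)}=1+qg_1=(1+q^2)\,\n_{(1,1)}-q^2\,\n_{(2)},$$
so $\gamma=1+q^2=q[2]$. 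In general $\gamma$ is, up to a power of $q$, the Murphy constant $\gamma_\lambda$, a product of quantum hook lengths. These are \emph{not} units in $R$: the ring $R$ is localized only at $q-q^{-1}$, and the $R$-algebra $\kappa=\mathbb C$ with $q=\sqrt{-1}$ (i.e.\ $e=2$) has $q-q^{-1}\neq0$ but $1+q^2=0$. Your closing assertion that products of $q$-integers are ``all invertible in $R$'' is precisely where the argument breaks. When $\gamma_\lambda=0$ in $\kappa$ the element $\m_{\lambda'}g_{d(\t_{\lambda'})}\n_\lambda$ lies in $\mathscr H^{\rhd\lambda}$, so the maps $h\mapsto\n_\lambda h$ and $h\mapsto\m_{\lambda'}g_{d(\t_{\lambda'})}\n_\lambda h$ do \emph{not} have the same image modulo more dominant terms; if they always did, every cell module would coincide with the image of the dual construction and hence be irreducible, contradicting non-semisimplicity.

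The statement is nevertheless true, but for a different reason, and one must keep in mind that $C(f,\lambda)$ and $\tilde C(f,\lambda)$ are quotients by \emph{different} ideals ($\mathscr B_{r,s}^{\rhd(f,\lambda)}$ versus $\mathscr B_{r,s}^{f+1}$), so there is no common ambient space in which a ``change of spanning set'' argument even makes sense. The correct map is left multiplication by $\m_{\lambda'}g_{d(\t_{\lambda'})}$, sending $e^f\n_\lambda g_{d(\s)}g_d+\mathscr B_{r,s}^{\rhd(f,\lambda)}$ to $e^f\m_{\lambda'}g_{d(\t_{\lambda'})}\n_\lambda g_{d(\s)}g_d+\mathscr B_{r,s}^{f+1}$ (note you also dropped the factor $g_{d(\s)}$). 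Its well-definedness rests on the Dipper--James/Murphy annihilation property that $\m_{\lambda'}\,\mathscr H\,\n_\mu=0$ whenever $\mu\ntrianglelefteq\lambda$ (carried across $e^f$ and reduced modulo $\mathscr B_{r,s}^{f+1}$), and its injectivity on the fact that $\m_{\lambda'}\,\mathscr H\,\n_\lambda$ is free of rank one spanned by $\m_{\lambda'}g_{d(\t_{\lambda'})}\n_\lambda$, together with a rank count. Neither input requires $\gamma_\lambda$ to be invertible, and this is the substance that your proposal is missing.
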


It follows from standard results on the representations of cellular algebras in \cite{GL} that there is an invariant form, say $\phi_{f, \lambda}$, on each cell module
$C(f, \lambda)$. Let $D^{f, \lambda}=C(f, \lambda)/\Rad \phi_{f, \lambda}$, where $\Rad \phi_{f, \lambda}$ is the radical of $\phi_{f, \lambda}$.
Then  $D^{f, \lambda}$ is either zero or absolutely irreducible, and all non-zero $D^{f, \lambda}$'s form a complete set of  all non-isomorphic irreducible $\mathscr B_{r, s, \kappa}$-modules.

Recall that a partition $\lambda$ is called $e$-restricted  if $\lambda_i-\lambda_{i+1}<e$ for all $i\ge 1$.  If $\lambda=(\lambda^{(1)}, \lambda^{(2)})$, then $\lambda$ is $e$-restricted if and only if both $\lambda^{(1)}$ and $\lambda^{(2)}$ are
$e$-restricted.   If $\lambda'$ is $e$-restricted, then $\lambda$ is called $e$-regular.
In \cite{Rsong}, we have proved that $D^{f, \lambda}\neq 0$ if and only if $\lambda$ is $e$-restricted provided that one of the conditions holds: (a) $\delta\neq 0$, (b)
 $\delta =0$ and $r\neq s$, (c) $\delta=0$, $r=s$ and $f\neq r$. This enables us to prove the following result in \cite{Rsong}.
\begin{Theorem}\label{main1}\cite[Theorem~5.3]{Rsong}
Let $\mathscr{B}_{r,s, \kappa}$ be the quantized walled Brauer algebra over
the field $\kappa$.
\begin{enumerate}\item If either $\delta\neq 0$ or $\delta=0$ and $r\neq s$,
then the non-isomorphic irreducible $\mathscr{B}_{r,s, \kappa}$--modules are
indexed by $\{(f, \lambda)\mid  0\le f\le \min\{r, s\}, \lambda \text{ being $e$-restricted}\}$.
\item If  $\delta=0$ and $r=s$,  then the non-isomorphic irreducible
$\mathscr{B}_{r,s, \kappa}$--modules are indexed by  $\{(f, \lambda)\mid  0\le f<r, \lambda \text{ being $e$-restricted}\}$.
\end{enumerate}
\end{Theorem}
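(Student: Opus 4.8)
The plan is to combine two facts already on the table. By the general theory of cellular algebras \cite{GL}, as $(f,\lambda)$ runs over the poset $\Lambda_{r,s}$ the non-zero modules $D^{f,\lambda}=C(f,\lambda)/\Rad\phi_{f,\lambda}$ form a complete irredundant list of the irreducible $\mathscr B_{r,s,\kappa}$-modules; and, by the criterion established in \cite{Rsong} and quoted just before the statement, $D^{f,\lambda}\neq0$ if and only if $\lambda$ is $e$-restricted, provided one of the conditions (a) $\delta\neq0$, (b) $\delta=0$ and $r\neq s$, (c) $\delta=0$, $r=s$ and $f\neq r$ holds. Part (1) is then immediate: its hypothesis is precisely ``(a) or (b)'', and neither condition refers to $f$, so for every $f$ with $0\le f\le\min\{r,s\}$ one has $D^{f,\lambda}\neq0\iff\lambda$ is $e$-restricted, which is exactly the asserted parametrization.

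For part (2) we have $\delta=0$ and $r=s$. When $0\le f<r=\min\{r,s\}$, condition (c) applies and gives $D^{f,\lambda}\neq0\iff\lambda$ is $e$-restricted. The only index left is $f=r$, where $\Lambda^f_{r,s}=\Lambda^+(0)\times\Lambda^+(0)$ reduces to the single bipartition $\lambda=(\emptyset,\emptyset)$, which is vacuously $e$-restricted; so the genuine content of part (2) is the extra assertion that $D^{r,(\emptyset,\emptyset)}=0$ when $\delta=0$ and $r=s$. I would deduce this from the identity
\[
e^r\,\mathscr B_{r,r,\kappa}\,e^r=0 \qquad (\delta=0,\ r=s).
\]
Granting it, observe that $(r,(\emptyset,\emptyset))$ is the maximum of $\Lambda_{r,r}$, so $\mathscr B_{r,r,\kappa}^{\rhd(r,(\emptyset,\emptyset))}=0$, and since $\n_{\t^\lambda\s}=1$ for the empty bipartition the cell module $C(r,(\emptyset,\emptyset))$ has basis $\{e^rg_d\mid d\in\mathscr D^r_{r,r}\}$ sitting inside $\mathscr B_{r,r,\kappa}$. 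With no lower-order terms to discard, the defining formula for the cellular form expresses $\phi_{r,(\emptyset,\emptyset)}(e^rg_d,e^rg_{d'})\,e^r$ as a product $e^r h\, e^r$ with $h$ in the subalgebra generated by the $g_i$ and $g^*_j$; this lies in $e^r\mathscr B_{r,r,\kappa}e^r=0$, so $\phi_{r,(\emptyset,\emptyset)}\equiv0$, $\Rad\phi_{r,(\emptyset,\emptyset)}=C(r,(\emptyset,\emptyset))$, and $D^{r,(\emptyset,\emptyset)}=0$. Together with the cases $f<r$ this yields the parametrization in part (2). (Equivalently, the same identity shows that the two-sided ideal generated by $e^r$ squares to zero, hence lies in the Jacobson radical of $\mathscr B_{r,r,\kappa}$ and annihilates $C(r,(\emptyset,\emptyset))$; then $D^{r,(\emptyset,\emptyset)}$, if non-zero, would be an irreducible module for the cellular quotient with cell poset $\{(f,\mu)\mid f<r\}$, hence isomorphic to some $D^{f,\mu}$ with $f<r$, contradicting the fact that distinct cell labels carry non-isomorphic irreducibles.)

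The one real obstacle is the loop identity $e^r\mathscr B_{r,r,\kappa}e^r=0$ for $r=s$. I would prove it by mimicking the diagram calculus: $e^r=e_1e_2\cdots e_r$ ``saturates'' all $r$ points on each side of the wall, so for any $b\in\mathscr B_{r,r,\kappa}$ no strand can pass between the two copies of $e^r$ in $e^rbe^r$, and collapsing it --- using the commutations in relations (1) and (4), the relations $e_1g_1e_1=\rho e_1=e_1g_1^*e_1$, relations (5)--(6), and finally $e_1^2=\delta e_1$ (whence $e_i^2=\delta e_i$) --- must produce a factor $\delta$; since $\delta=0$ this forces $e^rbe^r=0$. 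Turning this picture into a clean induction on the word length of $b$ purely from the presentation is the bulk of the work; alternatively one may import it from the diagram model of the classical walled Brauer algebra via Enyang's lifting of cellular bases \cite{Enyang2}.
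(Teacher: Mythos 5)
Your reduction is exactly the one the paper intends: Theorem \ref{main1} is imported from \cite{Rsong}, and its content is precisely ``general cellular theory from \cite{GL}'' plus the non-vanishing criterion quoted immediately before the statement, so parts (1) and the $f<r$ portion of (2) are, as you say, immediate. You have also correctly isolated the only genuinely new assertion, namely $D^{r,(\emptyset,\emptyset)}=0$ when $\delta=0$ and $r=s$, and your derivation of it from the identity $e^r\mathscr B_{r,r,\kappa}e^r=0$ is sound: since $(r,(\emptyset,\emptyset))$ is the maximal element of $\Lambda_{r,r}$ and $\n_{\lambda}=1$ there, the Gram form is computed with no lower-order terms and lands in $e^r\mathscr B_{r,r,\kappa}e^r$, so it vanishes identically; your alternative via the square-zero ideal is equally fine.

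The one real issue is the one you flag yourself: the identity $e^r\mathscr B_{r,r,\kappa}e^r=0$ is asserted on diagrammatic grounds but not proved, and it is not a formality --- it is exactly the computation that the excluded case (c) of the quoted criterion (equivalently, \cite[Theorem~5.3]{Rsong}) exists to encode, so leaving it as ``the bulk of the work'' means the new content of part (2) has not actually been established. The statement is true (when $f=r=s$ every interface vertex of $e^rbe^r$ lies on a horizontal arc of one of the two copies of $e^r$, so at least one closed loop forms and contributes a factor $\delta$; the subtlety is that in the quantized algebra a loop can also resolve to $\rho^{\pm1}$, as in $e_1g_1e_1=\rho e_1$, so one must check that for the fully saturated element $e^r$ the surviving factor really is $\delta$ and not $\rho$), but to make the argument complete you must either carry out the induction from the presentation, or reduce to a spanning set: for instance, use that the two-sided ideal generated by $e^r$ is spanned by $\sigma(g_e)e^rg_d$ with $(e,d)\in\mathscr D^r_{r,r}\times\mathscr D^r_{r,r}$ and compute $e^r\sigma(g_e)e^r$ and $e^rg_de^r$ directly from relations (1)--(6), where the factor $\delta$ appears via $e_i^2=\delta e_i$. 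As written, the proposal is the paper's approach with this single lemma outstanding.
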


We denote by  $[C(f, \lambda): D^{\ell, \mu}]$ the multiplicity of  $D^{\ell, \mu}$ in  $C(f, \lambda)$.
 Such a non-negative integer will be called a decomposition number of  $\mathscr B_{r, s, \kappa}$.

In the remaining part of this paper, we establish the explicit relationship between decomposition  numbers of $\mathscr B_{r, s}$  and those for
 Hecke algebras and $q$-Schur algebras. Using  Ariki, Varagnolo-Vasserot's
results on decomposition numbers of Hecke algebras and $q$-Schur algebras in \cite{Ar, VV} yields the formulae on the decomposition  numbers of $\mathscr B_{r, s}$, as required if the ground field is $\mathbb C$.

\section{Decomposition numbers of $\mathscr B_{r,s, \kappa}$ with $\rho^2\not\in q^{2\mathbb  Z}$}

In this section, we consider $\mathscr B_{r, s, \kappa}$ over $\kappa$ such that   $\rho^2\neq q^{2a}$ for all $a\in \mathbb Z$
with $|a|\le r+s-2$. So, $\rho^2\neq 1$ and $\delta\neq 0$. By Theorem~\ref{semimain},  $\mathscr B_{r, s, \kappa}$ is  semisimple if $e>\max\{r, s\}$. So,
we consider $\mathscr B_{r, s, \kappa}$ under the assumption  $e\le \max\{r, s\}$.  In this case,
 we will prove  that  decomposition numbers of  $\mathscr
B_{r,s, \kappa}$  are determined by those for Hecke algebras
associated with certain symmetric groups.
 We remark that  blocks of $\mathscr B_{r, s, \kappa}$
will also be classified.

Let $\mathscr B_{r, s, \kappa}$-mod be the category of right $\mathscr B_{r, s, \kappa}$-modules.
In \cite[\S4]{Rsong}, we define the exact functor $\F_{r, s}: \mathscr B_{r, s, \kappa}{\text{-mod}} \rightarrow \mathscr B_{r-1, s-1, \kappa}{\text{-mod}}$ and right exact
functor $\G_{r, s}:  \mathscr B_{r, s, \kappa}{\text{-mod}} \rightarrow \mathscr B_{r+1, s+1, \kappa}{\text{-mod}}$\footnote{In \cite{Rsong}, we considered two functors $\F_{r,s}$ and $\G_{r, s}$
for left modules. However, one can prove  similar results for right modules.}. We call $\F_{r, s}$ the Schur functor. By abuse of notations, we use $\F$ and $\G$ instead of $\F_{r, s}$ and $\G_{r, s}$,  respectively.
We remark that we consider right cell modules of $\mathscr B_{r,s, \kappa}$ in  this section.

\begin{Lemma}\label{compo} Let $(\ell, \mu), (f, \lambda)\in \Lambda_{r,
s}$ with $ \mu$ being $e$-restricted.  Then  $[C(f, \lambda):
D^{\ell, \mu}]\neq 0$ only if
 $\ell=f$.
\end{Lemma}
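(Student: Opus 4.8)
The plan is to argue by induction on $f$, peeling off copies of the "$e$-loop" generator $e^f$ using the Schur functor $\F$ and its interaction with cell modules. The key structural facts I would invoke are: (i) $\F$ is exact, (ii) on cell modules $\F$ acts by $\F C(f,\lambda)\cong C(f-1,\lambda)$ when $f\ge 1$ and $\F C(0,\lambda)=0$ (this is the standard behaviour of the Schur-type functor attached to the idempotent $e_1$ for walled Brauer algebras, established in \cite[\S4]{Rsong}), and (iii) correspondingly $\F D^{f,\mu}\cong D^{f-1,\mu}$ when $f\ge 1$ (and is $0$ when $f=0$), which holds precisely because $\mu$ is $e$-restricted so that $D^{f,\mu}\ne 0$ by Theorem~\ref{main1}. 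The case $\ell>f$ and the case $\ell<f$ will be handled separately, the first by a cell-filtration/triangularity argument and the second by applying $\F$ enough times.

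First I would dispose of the case $\ell > f$. Here one uses the cellular structure directly: $C(f,\lambda)$ is annihilated by the two-sided ideal $\mathscr B_{r,s,\kappa}^{\rhd(f,\lambda)}$ is \emph{not} quite what is needed; rather, the point is that $C(f,\lambda)$ is a module for the quotient $\mathscr B_{r,s,\kappa}/\mathscr B_{r,s,\kappa}^{f+1}$, i.e. $e^{f+1}$ acts as $0$ on $C(f,\lambda)$ by the description of the cell module together with Proposition~\ref{classcell}. On the other hand, if $\ell>f$ then $e^{f+1}$ acts as an isomorphism (or at least nontrivially, being essentially idempotent up to scalars) on $D^{\ell,\mu}$, since $D^{\ell,\mu}$ is a composition factor of $C(\ell,\mu)$ which is generated by $e^\ell$ with $\ell\ge f+1$. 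Hence $D^{\ell,\mu}$ cannot be a subquotient of $C(f,\lambda)$, giving $[C(f,\lambda):D^{\ell,\mu}]=0$.

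For the case $\ell < f$, I would apply the exact functor $\F$ a total of $\ell$ times. By exactness, $[C(f,\lambda):D^{\ell,\mu}] = [\F^{\ell}C(f,\lambda):\F^{\ell}D^{\ell,\mu}]$ computed in $\mathscr B_{r-\ell,s-\ell,\kappa}$, \emph{provided} $\F$ kills no composition factor along the way that could interfere — precisely, one needs $[\F^\ell C(f,\lambda):\F^\ell D^{\ell,\mu}]=[C(f,\lambda):D^{\ell,\mu}]$, which follows because $\F$ sends $D^{j,\nu}\mapsto D^{j-1,\nu}$ bijectively onto the nonzero simples of $\mathscr B_{r-1,s-1,\kappa}$ with $j\ge 1$ and kills those with $j=0$; iterating, the only composition factors of $C(f,\lambda)$ that survive $\ell$ applications are the $D^{j,\nu}$ with $j\ge \ell$, and on those $\F^\ell$ is "faithful" on multiplicities. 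Since $f>\ell$, $\F^{\ell}C(f,\lambda)\cong C(f-\ell,\lambda)$ with $f-\ell\ge 1$, and we are reduced to showing $[C(f',\lambda):D^{0,\mu}]=0$ for $f'\ge 1$ with $\mu$ $e$-restricted. But this is the special case $\ell=0<f'$ of what we are proving, which is exactly the previously-settled situation: $e^1=e_1$ acts as zero on $D^{0,\mu}$ (a composition factor of $C(0,\mu)$, on which $e_1=0$ since $\mathscr B_{r',s',\kappa}^{1}$ annihilates $C(0,\mu)$), whereas $e_1$ does not annihilate $C(f',\lambda)$ for $f'\ge 1$ because that cell module is generated by $e^{f'}=e_1\cdots e_{f'}$ and $e_1 e^{f'}$ is, up to the idempotent normalization $e_1^2=\delta e_1$ with $\delta\ne 0$, a nonzero multiple of $e^{f'}$. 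Hence $D^{0,\mu}$ is not a composition factor and $[C(f',\lambda):D^{0,\mu}]=0$.

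The main obstacle I anticipate is making the multiplicity bookkeeping under iterated $\F$ fully rigorous: one must be careful that $\F$ sends irreducibles to irreducibles-or-zero in exactly the advertised way (which rests on $\mu$ being $e$-restricted, via Theorem~\ref{main1}, so that the relevant $D^{j,\mu}$ are genuinely nonzero), and that no "accidental" cancellation of multiplicities occurs — i.e. that $\F$ induces an injection on the relevant Grothendieck-group classes restricted to simples with $j\ge 1$. This is precisely the sort of statement that the construction of $\F,\G$ as a pair of (essentially adjoint) exact/right-exact functors in \cite[\S4]{Rsong} is designed to deliver, so I would cite the needed properties of $\F$ from there rather than re-derive them. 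A clean way to package the whole argument is: $e_1$ acts as zero on $D^{0,\nu}$ for every $e$-restricted $\nu$ and acts invertibly (on its image) on every cell module $C(f,\lambda)$ with $f\ge 1$; combine this with exactness of $\F$ and the identification $\F C(f,\lambda)\cong C(f-1,\lambda)$ to run the induction downward in $f$ and conclude $[C(f,\lambda):D^{\ell,\mu}]=0$ whenever $\ell\ne f$.
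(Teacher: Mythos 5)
Your overall strategy for $\ell<f$ --- apply the exact Schur functor $\F$ repeatedly, using $\F C(f,\lambda)\cong C(f-1,\lambda)$ and $\F D^{j,\nu}\cong D^{j-1,\nu}$ for $j\ge 1$, to reduce to the case $\ell=0$, $f\ge 1$ --- is exactly the paper's strategy (the paper runs it as an induction on $r+s$ rather than on $f$, but that is cosmetic). The case $\ell>f$ is also unproblematic: it follows from the general theory of cellular algebras, since $[C(f,\lambda):D^{\ell,\mu}]\neq 0$ forces $(f,\lambda)\unrhd(\ell,\mu)$ and hence $f\ge\ell$; your version of this via the action of $e^{f+1}$ on $D^{\ell,\mu}$ is vaguer than necessary but fixable.

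The genuine gap is in your base case $\ell=0<f$. You argue: $e_1$ acts as zero on $D^{0,\mu}$, while $e_1$ does not annihilate $C(f,\lambda)$, ``hence $D^{0,\mu}$ is not a composition factor.'' That inference is a non sequitur. A module on which $e_1$ acts nontrivially can perfectly well have composition factors on which $e_1$ acts as zero --- the $e_1$-annihilated factors simply sit in the subquotient $M/e_1M$ (or inside $\ker e_1$). Nothing in your argument excludes $D^{0,\mu}$ from occurring there. A structural symptom of the problem: your proof of this case never uses the standing hypothesis of Section~3 that $\rho^2\neq q^{2a}$ for $|a|\le r+s-2$, yet the statement is \emph{false} without it. When $\rho^2\in q^{2\mathbb Z}$ (e.g.\ $\rho=q^n$), cell modules $C(f,\lambda)$ with $f\ge 1$ do acquire composition factors $D^{0,\mu}$ --- this mixing of different values of $f$ is precisely what Section~5 computes via tilting multiplicities for the rational $q$-Schur algebra. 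So any correct proof must invoke the parameter condition. The paper does this by restricting $C(f,\lambda)$ and $D^{0,\mu}$ to $\mathscr B_{r-1,s,\kappa}$, using the modular branching rule for $\mathscr H_r$ to locate a simple $D^{0,(\mu^{(1)}\setminus p,\mu^{(2)})}$ in the socle of the restriction, applying the restriction rule for cell modules \cite[Theorem~4.15]{Rsong} together with the inductive hypothesis to pin down $f=1$ and a nonvanishing decomposition number one level down, and finally comparing central-element eigenvalues via \cite[Lemma~6.3]{Rsong} to conclude $\rho^2=q^{2k}$ with $|k|\le r+s-2$ --- contradicting the hypothesis. Some argument of this kind (or another mechanism that visibly consumes the condition $\rho^2\notin q^{2\mathbb Z}$) is indispensable and is missing from your proposal.
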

\begin{proof} We prove our result by induction on $r+s$. Since we
are assuming $r, s\in \mathbb Z^{>0}$, we have $r+s\ge 2$. It is not
difficult to check the result for $r+s=2$. In this case, $r=s=1$, $f=1$ and $\lambda=(\emptyset, \emptyset)$ if $f\neq 0$.

 In general, we can assume  $r\ge 2$. Suppose  $\ell>0$. We
apply the exact functor $\F$  to   both $D^{\ell, \mu}$ and $C(f,
\lambda)$.  By \cite[Lemma~4.3]{Rsong}, we have $\F(C(f, \lambda))\cong C(f-1, \lambda)$ for left cell modules. In fact, this holds for
right cell modules.
Since we are assuming  $[C(f, \lambda): D^{\ell,
\mu}]\neq 0$, we have
 $f\ge \ell\ge 1$.
By
\cite[6.2g]{Gr}, $\F(D^{\ell, \mu})$ is either zero or a simple
$\B_{r-1, s-1, \kappa}$-module and each simple $\B_{r-1, s-1, \kappa}$-module is of
 form    $\F(D^{\ell, \mu})$ for some simple $\mathscr B_{r,
s, \kappa}$-module  $D^{\ell, \mu}$. Mimicking arguments in the proof of \cite[Lemma~2.9]{RS2}, we see that
there is a non-trivial homomorphism from $C(\ell-1, \mu)$ to $\F(D^{\ell, \mu})$, forcing   $\F(D^{\ell, \mu})\neq 0$.
 By Theorem~\ref{main1}, $D^{\ell-1, \mu}\neq 0$ and $\F(D^{\ell, \mu})=D^{\ell-1, \mu}$,
if $\ell\ge 1$. By the exactness of $\F$, we
have \begin{equation}\label{decompeq} [C(f-1, \lambda) : D^{\ell-1,
\mu}]=[C(f, \lambda) : D^{\ell, \mu}]\neq 0.\end{equation} Using
induction   assumption yields $f=\ell$.

Now, we assume $\ell=0$. If $f=0$, there is nothing to be proved. So, we assume $f\ge 1$.
Let  $\Res^L M$  be the restriction of $\mathscr B_{r, s, \kappa}$-module  $M $ to $\mathscr B_{r-1, s, \kappa}$.
Note that $D^{0, \mu}$ can be considered as $\mathscr H_r\otimes \mathscr H_s$-module. So, $\Res^L D^{0, \mu}$
can be considered as $\mathscr H_{r-1}\otimes \mathscr H_s$-module.
By  modular branching rule for $\mathscr H_r$ in \cite{Brun}, we  find a removable node
$p$ of $\mu^{(1)}$ such that $D^{0, (\mu^{(1)}\setminus p, \mu^{(2)})}$
is in the socle of $\Res^L D^{0, \mu}$.
It is a composition factor
of $\Res^L C(f, \lambda)$. By \cite[Theorem~4.15]{Rsong} and induction assumption, $f=1$ and
 $$[C(0, (\lambda^{(1)}, \lambda^{(2)}\cup p_2)):  D^{0,
(\mu^{(1)}\setminus p, \mu^{(2)})}]\neq 0   $$ for some  $p_2\in \mathscr A(\lambda^{(2)})$.
 Applying \cite[Lemma~6.3]{Rsong}  to
$C(0, \mu)$, $C(0, (\mu^{(1)}\setminus p, \mu^{(2)}))$,
$C(1, \lambda)$ and $C(0, (\lambda^{(1)},
\lambda^{(2)}\cup p_2))$ yields $\rho^2=q^{2k}$ with
$|k|=|\res(p)+\res(p_2)|\le r+s-2$, where $\res(p)=j-i$ if $p$ is in $i$-th row and $j$-th column. This is  a contradiction.
\end{proof}


Graham and Lehrer\cite{GL}  defined  a cell block
of a cellular algebra, which  is a equivalent class generated by  the notion of cell linked. In our case, $(f, \lambda)$
and $(\ell, \mu)$ are said to be cell linked if either $D^{f, \lambda}$ is a composition factor of $C(\ell, \mu)$
or $D^{\ell, \mu}$ is a composition factor of $C(f, \lambda)$.  By \cite[3.9.8]{GL}, a  block of irreducible modules for $\mathscr B_{r, s, \kappa}$
 is the intersection of $\bar \Lambda_{r, s}$ with a cell block,  where $\bar \Lambda_{r, s}$ consists of all $(f, \lambda)\in \Lambda_{r, s}$ with $D^{f, \lambda}
\neq 0$. See Theorem~\ref{main1} for the explicit description on $\bar \Lambda_{r, s}$.

\begin{Theorem}\label{blocks1} Suppose $(f, \lambda), (\ell, \mu)\in \Lambda_{r, s}$.
 \begin{enumerate}\item  $C(f,
\lambda)$ and $C(\ell, \mu)$ are in the same block if and only if
$f=\ell$ and $C(0, \lambda)$ and $C(0,  \mu)$ are in the same block.
\item
$[C(f, \lambda): D^{\ell, \mu}]=\delta_{f, \ell} [C(0,
\lambda):D^{0, \mu}]$ for any $\mu$ being $e$-restricted.
\end{enumerate}\end{Theorem}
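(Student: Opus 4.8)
The plan is to prove part (2) first, by induction on $f$ using the exact Schur functor $\F$, and then to read off part (1) from part (2) by transporting cell-linking chains along the bijection $(f,\nu)\mapsto(0,\nu)$.

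For part (2) I would first dispose of the easy cases: if $f\ne\ell$ both sides vanish, the left one by Lemma~\ref{compo} since $\mu$ is $e$-restricted; and $f=0$ is immediate. So I may assume $f=\ell\ge1$ and argue by induction on $f$. I would recall from the proof of Lemma~\ref{compo} that $\F$ is exact, that $\F(C(g,\lambda))\cong C(g-1,\lambda)$ and $\F(D^{g,\nu})=D^{g-1,\nu}$ for all $g\ge1$ and all $e$-restricted $\nu$, that these modules $D^{g-1,\nu}$ are nonzero and pairwise non-isomorphic by Theorem~\ref{main1} (using $\delta\ne0$, which holds here since $\rho^2\ne1$), and that by Lemma~\ref{compo} every subquotient of a composition series of $C(f,\lambda)$ is some $D^{f,\nu}$ with $\nu$ $e$-restricted. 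Applying the exact functor $\F$ to such a composition series and reading off the multiplicity of $D^{f-1,\mu}=\F(D^{f,\mu})$ would then give
\[
[C(f,\lambda):D^{f,\mu}]=[\F(C(f,\lambda)):D^{f-1,\mu}]=[C(f-1,\lambda):D^{f-1,\mu}].
\]
Since $\mathscr B_{r-1,s-1,\kappa}$ again satisfies the standing hypothesis ($\rho^2\ne q^{2a}$ for $|a|\le(r-1)+(s-1)-2$, and $\delta\ne0$), the induction hypothesis would give $[C(f-1,\lambda):D^{f-1,\mu}]=[C(0,\lambda):D^{0,\mu}]$, which is part (2); when $r-f=0$ or $s-f=0$ the base of the recursion is a statement about a type-$A$ Hecke algebra, where it is trivial.

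For part (1), I would first observe that every cell-linked pair has equal first coordinate by Lemma~\ref{compo}, so each cell block, and hence each block, is contained in a single value of $f$; this gives the requirement $f=\ell$ in both directions. For fixed $f$ it then remains to show that $C(f,\lambda)$ and $C(f,\mu)$ lie in one block of $\mathscr B_{r,s,\kappa}$ if and only if $C(0,\lambda)$ and $C(0,\mu)$ lie in one block of $\mathscr B_{r-f,s-f,\kappa}$. Here I would invoke part (2): for an $e$-restricted $\nu$, the module $D^{f,\nu}$ is a composition factor of $C(f,\eta)$ exactly when $D^{0,\nu}$ is a composition factor of $C(0,\eta)$ (the multiplicities agree), while $D^{f,\nu}\ne0\Leftrightarrow\nu$ is $e$-restricted $\Leftrightarrow D^{0,\nu}\ne0$ by Theorem~\ref{main1}. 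Consequently $(f,\nu)\mapsto(0,\nu)$ is an isomorphism between the cell-linking graph on $\{(f,\nu):\nu\in\Lambda_{r,s}^f\}$ for $\mathscr B_{r,s,\kappa}$ and the cell-linking graph on $\{(0,\nu):\nu\in\Lambda_{r-f,s-f}^0\}$ for $\mathscr B_{r-f,s-f,\kappa}$; and since the latter family is a union of blocks of $\mathscr B_{r-f,s-f,\kappa}$ (once more by Lemma~\ref{compo}, whose hypotheses are inherited), a chain connecting two cell modules on either side transports along this isomorphism to one on the other, which is the asserted equivalence.

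The step I expect to demand the most care is bookkeeping rather than a conceptual point: one must check that the Schur-functor identities $\F(C(g,\lambda))\cong C(g-1,\lambda)$ and $\F(D^{g,\nu})=D^{g-1,\nu}$ of \cite[\S4]{Rsong} and the proof of Lemma~\ref{compo}, established there for left modules, hold verbatim in the right-module setting used in this section (as the footnote there promises), and that the standing hypothesis in its effective form $\rho^2\ne q^{2a}$ for $|a|\le r+s-2$, together with $\delta\ne0$, is preserved when passing from $(r,s)$ to $(r-f,s-f)$, so that Lemma~\ref{compo}, Theorem~\ref{main1} and the induction hypothesis all remain available for the smaller algebra.
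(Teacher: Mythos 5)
Your proposal is correct, and part (2) together with the forward direction of part (1) is essentially the paper's argument: iterate the identity $[C(f,\lambda):D^{f,\mu}]=[C(f-1,\lambda):D^{f-1,\mu}]$ of (\ref{decompeq}) obtained by applying the exact functor $\F$ to a composition series, using Lemma~\ref{compo} and Theorem~\ref{main1} to see that every composition factor of $C(f,\lambda)$ is some $D^{f,\nu}$ with $\nu$ $e$-restricted and that $\F$ carries these to the pairwise non-isomorphic simples $D^{f-1,\nu}$. (You are in fact more careful here than the paper, which asserts that (b) ``follows immediately''; your composition-series argument is what upgrades the implication behind (\ref{decompeq}) to a genuine equality.) Where you diverge is the converse of part (1). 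The paper proves it independently of (b): it takes $D^{0,\lambda}\cong M_1/M_2$ inside $C(0,\mu)$, applies the right-exact functor $\G$, and uses $\F\G(D^{0,\lambda})\cong D^{0,\lambda}$ from \cite[Lemma~4.3a]{Rsong} together with Theorem~\ref{main1} to lift the composition factor up one level at a time. You instead transport an entire cell-linking chain from level $0$ to level $f$ using the two-sided equality in (b) plus the observation (Lemma~\ref{compo} applied to $\mathscr B_{r-f,s-f,\kappa}$) that such a chain cannot leave level $0$. Your route buys economy --- no appeal to $\G$ or to $\F\G\cong\mathrm{id}$ at all --- at the cost of leaning on (b) being an equality rather than a one-way implication, which your argument does establish; the bookkeeping points you flag (right versus left modules, inheritance of the hypothesis $\rho^2\neq q^{2a}$ and $\delta\neq 0$ by the smaller algebra, and the degenerate case $r-f=0$ or $s-f=0$ where the level-$0$ algebra is a Hecke algebra) are exactly the points the paper also leaves implicit.
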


\begin{proof}  We remark that (b) follows from Lemma~\ref{compo} and (\ref{decompeq})
immediately.  We prove (a) as follows.

Without loss of any generality, we can assume that $C(f, \lambda)$ has the simple head $D^{f, \lambda}$ which is a composition factor of $C(\ell, \mu)$.
By Lemma~\ref{decompeq}, $f=\ell$.
Applying exact functor $\F$ to both   $D^{f,
\lambda}$  and $C(\ell, \mu)$ repeatedly, we have that
  $D^{0, \lambda}$ is a composition factor of  $C(0,
\mu)$.

Conversely, let  $D^{0, \lambda}$ be a composition factor of  $C(0, \mu)$. Then, there are two submodules $M_1, M_2$ of $C(0, \mu)$ such that
$D^{0, \lambda}\cong M_1/M_2$.
By the right exactness of $\G$,  there is an epimorphism from $\G(M_1)$ to $G(D^{0, \lambda})$. Similarly, we have an epimorphism from $\G(C(0, \lambda))$ to   $G(D^{0, \lambda})$. By  \cite[Lemma~4.3a]{Rsong},
$D^{0, \lambda}=\F\G(D^{0, \lambda})$, forcing   $G(D^{0, \lambda})\neq 0$. Since $C(0, \lambda)$ has the simple head $D^{0, \lambda}$, $\lambda$ is $e$-restricted. By Theorem~\ref{main1},
 $C(1, \lambda)$ has the simple $D^{1, \lambda}$, forcing an epimorphism from  $G(D^{0, \lambda})$ to  $D^{1, \lambda}$. So, $D^{1, \lambda}$  is a composition factor of
 $\G(M_1)\subset C(1, \mu)$.
 Using the previous arguments repeatedly, we have that $D^{f, \lambda}$  is a composition factor of $C(f, \mu)$. So, $C(f, \lambda)$ and $C(f, \mu)$ are in the same block.

\end{proof}

By Theorem~\ref{blocks1} and   explicit description on  blocks
of Hecke algebras associated with symmetric groups in, e.g.
\cite{Ma}, we know  explicitly the description of blocks of
non-semisimple $\mathscr B_{r,s, \kappa}$ under the assumption $\rho^2\not\in  q^{2\mathbb Z}$. Further, since $C(0, \lambda)$ can be considered as the cell module
of $\mathscr H_{r-f}\otimes \mathscr H_{s-f}$, $[C(f, \lambda): D^{\ell, \mu}]$ can be computed by Ariki's result~\cite{Ar} on the decomposition numbers of Hecke algebra associated to symmetric groups
if the ground field is $\mathbb C$.  More explicitly, such decomposition numbers are computed via  inverse Kazhdan-Lusztig polynomials associated to certain affine Weyl groups of type $A$.

\section{Singular vectors of the  mixed tensor product}\label{rational}
Throughout, let $\mathbb Q(q)$ be  the quotient field of $\mathcal Z$, where  $\mathcal Z=\mathbb Z[q,q^{-1}]$ is  the ring of Laurent polynomials in
indeterminate $q$.

Let $P^\vee $ be the free $\mathbb{Z}$--module with basis
$h_1,\cdots, h_n$, and let  $P^{\vee^*}$ be  its  dual. Then  $P^{\vee^*}$ has a dual basis $\varepsilon_1,\cdots,\varepsilon_n$
 such that $\varepsilon_i(h_j) =\delta_{i,j}, \text{
$1\le i, j\le n$ }$.
The quantum general linear group $\U_q(\mathfrak{gl}_n)$
 is an
associative $\mathbb{Q}(q)$--algebra   generated by $E_i, F_i$,
$1\le i\le n-1$ and $q^h$, $h\in P^\vee$ subject to the defining
relations
\begin{enumerate} \item $ q^0=1$, $q^h q^{h'}=q^{h+h'}$, for any $h,
h'\in P^{\vee}$,
\item $q^h E_i q^{-h}=q^{\alpha_i(h)} E_i$, where  $\alpha_i=\varepsilon_i-\varepsilon_{i+1}$,
\item $q^h F_i q^{-h}=q^{-\alpha_i(h)}F_i$,
\item $E_iF_j-F_jE_i= \delta_{i,j} \frac{K_i-K_i^{-1}}{q-q^{-1}}$ and $K_i=
q^{h_i-h_{i+1}}$,
\item $E_iE_j=E_jE_i$, for $|i-j|>1$,
\item $F_iF_j=F_jF_i$, for $|i-j|>1$.
\item $E_i^2E_j-(q+q^{-1})E_iE_jE_i+E_jE_i^2=0$
for $|i-j|=1$, \item $F_i^2F_j-(q+q^{-1})F_iF_jF_i+F_jF_i^2=0$  for
$|i-j|=1$, \end{enumerate}

It is well known that
$\U_q(\mathfrak{gl}_n)$ is a Hopf algebra   such that the
 comultiplication $\Delta$, counit $\epsilon$ and antipode $S$ satisfy the following conditions:
\begin{enumerate}\item $\Delta(E_i)=E_i\otimes K_i^{-1} +1\otimes E_i$,
\item $\Delta(F_i)=F_i\otimes 1+K_i\otimes F_i$,
\item $\Delta (q^h)=q^h\otimes q^h$,
\item $S(F_i)=-K_i^{-1} F_i$, $S(E_i)=-E_i K_i$ and $S(q^h)=q^{-h}$,
\item $ \varepsilon(E_i)=\varepsilon(F_i)=0$ and $\varepsilon (q^h)=1$.
\end{enumerate}

If we use $q^{-1}$ instead of $q$, then the previous $\U_q(\mathfrak {gl}_n)$ is the quantum general
linear group in \cite{Jan} and the current $E_i$ and $F_j$ correspond to $F_i$ and $E_j$ in \cite{Jan}.

It is known that $\U_q(\mathfrak{gl}_n)$ has a  $\mathcal Z$-Hopf-subalgebra $\mathbf U_{\mathcal Z}$, which is generated by
 $q^h$, and divided powers $E_i^{(\ell)}=\frac {E_i^\ell}
{[\ell]!}$ and $F_i^{(\ell)}=\frac {F_i^\ell} {[\ell]!}$, for all $h\in P^{\vee}$
and all $\ell\in \mathbb Z^{>0}$, where $[\ell]!=[\ell][\ell-1]\cdots
[1]$, $[\ell]=\frac{q^\ell -q^{-\ell}}{q-q^{-1}}$.

For
each left $\U_{\mathcal Z}$-module $M$, and  $\lambda\in \mathbb
Z^n$, define
$$M_\lambda=\{m\in M\mid q^{h_i}\cdot m=q^{\lambda_i} m, 1\le i\le n\}.$$
Then  $\lambda$ is called   a weight of $M$ if $M_\lambda\neq 0$. In this case, $M_\lambda$ is called the weight space  of $q^h$ acting on $M$.
  Further, each weight space  of $M$ is of form $M_\lambda$.
If $\lambda_1\ge \lambda_2\ge \cdots\ge \lambda_n$, then
$\lambda$ is called a \textsf{dominant  weight}. Let $\mathfrak
{X}^+(n)$ be the set of all $\lambda\in \mathbb Z^n$ with $\lambda_i\ge \lambda_{i+1}$ for all $i$, $1\le i\le n-1$.

\begin{Lemma}\label{natural1}\cite{DDS1}
Let $V$ be the  free $\mathcal Z$-module $V$ with basis $\{v_1, v_2, \cdots,
v_n\}$.  Let $V^*=\text{Hom}_{\mathcal  Z} (V, \mathcal
Z)$ be the dual of $V$ with dual basis  $\{v_1^*, v_2^*, \cdots,
v_n^*\}$. Then both  $V$ and $V^*$ are  left $\U_{\mathcal Z}$-module  such that
\begin{enumerate}\item  $ q^h v_j  =q^{\varepsilon_j(h)}v_j$,  $E_iv_j=\delta_{j, i+1}v_i$, $F_iv_j=\delta_{i, j} v_{i+1}$,
\item $  q^h v_j^*  =q^{-\varepsilon_j(h)}v_j^*$,
$E_iv_j^*=-\delta_{i, j} q^{-1}v_{i+1}^*$,
$F_iv_j^*=-\delta_{i+1, j} q v_{i}^*$,\end{enumerate}
  if all of them make sense. \end{Lemma}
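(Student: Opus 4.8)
The plan is to verify this directly; it is due to \cite{DDS1}. Since $\U_{\mathcal Z}$ is generated over $\mathcal Z$ by the $q^h$ together with the divided powers $E_i^{(\ell)}$, $F_i^{(\ell)}$, I would argue in two stages. First I would check that the formulas in (a) (resp.\ in (b)) extend $\mathbb Q(q)$-linearly to operators on $V\otimes_{\mathcal Z}\mathbb Q(q)$ (resp.\ on $V^*\otimes_{\mathcal Z}\mathbb Q(q)$) satisfying the defining relations of $\U_q(\mathfrak{gl}_n)$, so that these spaces become $\U_q(\mathfrak{gl}_n)$-modules. Then I would check that the $\mathcal Z$-lattices $\bigoplus_j\mathcal Z v_j\subseteq V$ and $\bigoplus_j\mathcal Z v_j^*\subseteq V^*$ are stable under $\U_{\mathcal Z}$, which yields the asserted integral module structure.

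For the relations, the key point is that in the prescribed basis every generator acts by a very simple rule. The $q^h$ act diagonally, with $v_j$ of weight $\varepsilon_j$ and $v_j^*$ of weight $-\varepsilon_j$, so relations (1)--(3) follow at once from the $\mathbb Z$-linearity of $\varepsilon_j$ and of $\alpha_i=\varepsilon_i-\varepsilon_{i+1}$. For relation (4) I would evaluate $E_iF_j-F_jE_i$ on each basis vector: all contributions vanish unless $i=j$, and for $i=j$ the operator $E_iF_i-F_iE_i$ is diagonal and supported on $v_i,v_{i+1}$ (resp.\ on $v_i^*,v_{i+1}^*$), where I would compare its eigenvalues with those of $\tfrac{K_i-K_i^{-1}}{q-q^{-1}}$, using that $K_i=q^{h_i-h_{i+1}}$ acts on $v_k$ by $q^{\delta_{k,i}-\delta_{k,i+1}}$ (and on $v_k^*$ by the inverse scalar). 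Relations (5) and (6) hold because, when $|i-j|>1$, the composites $E_iE_j$, $E_jE_i$ (and likewise for the $F$'s) already vanish as operators. For the Serre relations (7) and (8) one uses that $E_i^2=F_i^2=0$ as operators and that $E_iE_jE_i=F_iF_jF_i=0$ when $|i-j|=1$, so all three terms in each relation vanish identically; the same holds on $V^*$. Rather than redoing the $V^*$ computations from scratch, I would observe that the formulas in (b) are exactly what the antipode produces from those in (a) via $(x\cdot f)(v)=f(S(x)\cdot v)$, using the formulas for $S$ recorded above; this bypasses most of the bookkeeping with signs and powers of $q$.

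The integrality step is then immediate: since $E_i^2$ and $F_i^2$ act as $0$ on $V$, one has $E_i^{(\ell)}=F_i^{(\ell)}=0$ as operators for all $\ell\ge 2$, while $E_i^{(1)}=E_i$, $F_i^{(1)}=F_i$ and all $q^h$ preserve $\bigoplus_j\mathcal Z v_j$; the same argument applies to $\bigoplus_j\mathcal Z v_j^*$. Hence both lattices are $\U_{\mathcal Z}$-submodules, as claimed. The only step that needs genuine care --- rather than being a real obstacle --- is relation (4), together with getting the correct signs and powers of $q$ in the $V^*$-action; routing the latter through the antipode makes the entire verification routine.
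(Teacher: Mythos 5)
Your proposal is correct and follows essentially the same route as the paper: the paper simply cites \cite{DDS1} for part (a) and notes that part (b) follows from (a) via the antipode $S$, which is exactly your mechanism $(x\cdot f)(v)=f(S(x)v)$ (and your computations with $S(E_i)=-E_iK_i$, $S(F_i)=-K_i^{-1}F_i$ do reproduce the signs and powers of $q$ in (b)). Your additional verification of the defining relations for (a) and the integrality of the lattice under divided powers (using $E_i^2=F_i^2=0$) fills in details the paper leaves to the reference, and is sound.
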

\begin{proof} (a) has been given in \cite{DDS1} and (b) can be verified easily by using (a) and antipode $S$ for $\U_q(\mathfrak{gl}_n)$.\end{proof}

In the remaining part of this paper, we denote $V^*$ by $W$. Fix two positive integers $r$ and $s$. Then
the mixed tensor space $V^{r, s}: =V^{\otimes r}\otimes W^{\otimes s}$, which was studied in \cite{DDS1, DDSII}, is a left $\U_{\mathcal Z}$-module.
Given positive integers $n, r, s$, let
\begin{equation}\begin{aligned} & I(n, r) =\{\mathbf i\mid \mathbf i=(i_r, i_{r-1},
\cdots, i_1),   1\le i_j\le n,  1\le
j\le r\},\\
& I^*(n, s) =\{\mathbf i\mid \mathbf i=(i_1, i_{2},
\cdots, i_s),   1\le i_j\le n,  1\le
j\le s\}.\\
\end{aligned}
\end{equation}
Then the symmetric group $\mathfrak S_r\times \mathfrak S_s$  acts on the right of $I(n, r)\times I^*(n,s)$ by place permutation in the sense $(\mathbf i, \mathbf j)ww^*=(\mathbf iw, \mathbf jw^*)$ for
any $(\mathbf i, \mathbf j)\in I(n, r)\times I^*(n,s)$ and $w\in  \mathfrak S_r$, and $ w^*\in \mathfrak S_s $.

For each $(\mathbf i, \mathbf j)\in I(n, r)\times I^*(n,
s)$, define  $$\lambda_k=\# \{\ell \mid i_\ell=k\}-\# \{\ell \mid j_\ell=k\},$$ for  $1\le k\le n$ and
write $\text{wt}(\mathbf i, \mathbf j)=(\lambda_1, \cdots, \lambda_n)$.  We
 call $(\lambda_1, \cdots, \lambda_n)\in \mathbb Z^n$, the weight of $(\mathbf i, \mathbf j)$. It is easy to see that $(\mathbf i, \mathbf j)$ and $(\mathbf k, \mathbf l)$ have the same weight if they  are in the same $\mathfrak S_r\times \mathfrak S_s$-orbit. However, the converse is not true.

For each  $(\mathbf i, \mathbf j)\in I(n, r)\times I^*(n,
s)$, define   $v_{\mathbf i|\mathbf j}=v_{\mathbf i}\otimes
v^*_{\mathbf j}$, where
\begin{equation}\label{ij}  v_{\mathbf i}=v_{i_r}\otimes
v_{i_{r-1}}\otimes \cdots\otimes  v_{i_1}, \quad \text{and}\quad
v^*_{\mathbf j}=v^*_{j_1}\otimes v^*_{j_2}\otimes \cdots\otimes
v^*_{j_s}.
\end{equation}
Then $\{v_{\mathbf i|\mathbf j}\mid (\mathbf i, \mathbf j) \in I(n,
r)\times I^*(n, s)\}$ is a $\mathcal Z$-basis of $V^{r, s} $.
Obviously, the weight of $v_{\mathbf i|\mathbf j}\in V^{r, s}$ is the same as the weight of  $(\mathbf i, \mathbf j)\in I(n, r)\times I^*(n, s)$.

 \begin{Lemma}~\label{bij} Let $\Lambda(r,s)=\{\lambda\in\mathbb Z^n \mid \sum_{\lambda_i>0} \lambda_i=r-f,
\sum_{\lambda_j<0} \lambda_i=f-s, 0\le f\le \text{min} \{r, s\}\}$. Then  $\Lambda(r, s)$ is the
set of weights of  $V^{r, s}$.
 \end{Lemma}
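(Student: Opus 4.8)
The plan is to show the two inclusions separately, the containment ``weights of $V^{r,s}$ $\subseteq \Lambda(r,s)$'' being the easy direction and the reverse containment requiring us to exhibit, for each $\lambda\in\Lambda(r,s)$, an explicit basis vector $v_{\mathbf i|\mathbf j}$ of weight $\lambda$. First I would recall from the paragraph preceding the statement that $\{v_{\mathbf i|\mathbf j}\}$ is a $\mathcal Z$-basis of $V^{r,s}$ consisting of weight vectors, so every weight of $V^{r,s}$ is of the form $\wt(\mathbf i,\mathbf j)$ for some $(\mathbf i,\mathbf j)\in I(n,r)\times I^*(n,s)$, and conversely any such $\wt(\mathbf i,\mathbf j)$ is a weight. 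Thus the lemma reduces to the purely combinatorial identity
\[
\{\wt(\mathbf i,\mathbf j)\mid (\mathbf i,\mathbf j)\in I(n,r)\times I^*(n,s)\}=\Lambda(r,s).
\]

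For the inclusion $\subseteq$: given $(\mathbf i,\mathbf j)$ with $\lambda=\wt(\mathbf i,\mathbf j)$, write $a_k=\#\{\ell\mid i_\ell=k\}$ and $b_k=\#\{\ell\mid j_\ell=k\}$, so $\lambda_k=a_k-b_k$, $\sum_k a_k=r$, and $\sum_k b_k=s$. Set $f=\sum_k\min\{a_k,b_k\}$; then $0\le f\le\min\{r,s\}$, and summing $\lambda_k=a_k-b_k$ over the indices with $\lambda_k>0$ (equivalently $a_k>b_k$) gives $\sum_{\lambda_k>0}\lambda_k=\sum_{a_k>b_k}(a_k-b_k)=r-f$, and likewise over the indices with $\lambda_k<0$ gives $\sum_{\lambda_k<0}\lambda_k=f-s$. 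Hence $\lambda\in\Lambda(r,s)$.

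For the reverse inclusion $\supseteq$: given $\lambda\in\Lambda(r,s)$ with the associated $f$, I would construct $(\mathbf i,\mathbf j)$ directly. Choose nonnegative integers $a_k,b_k$ with $a_k-b_k=\lambda_k$, $\sum a_k=r$, $\sum b_k=s$: for instance take $a_k=\lambda_k,\ b_k=0$ when $\lambda_k>0$; $a_k=0,\ b_k=-\lambda_k$ when $\lambda_k<0$; and distribute the remaining ``$f$ common entries'' among the indices with $\lambda_k=0$ (or, if no such index exists but $f>0$, add $f$ to one fixed $a_k$ and to the corresponding $b_k$) — here the hypotheses $\sum_{\lambda_k>0}\lambda_k=r-f$ and $\sum_{\lambda_k<0}\lambda_k=f-s$ guarantee exactly $f$ units are left over on each side, so this is consistent. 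Then let $\mathbf i$ be any tuple in which $k$ occurs $a_k$ times and $\mathbf j$ any tuple in which $k$ occurs $b_k$ times; by construction $\wt(\mathbf i,\mathbf j)=\lambda$. The main obstacle is the bookkeeping in this last step — in particular the degenerate case where $\lambda$ has no zero component, so that the $f$ ``matched pairs'' must be absorbed into nonzero components; one should check there that $n$ is large enough, or more precisely that the statement is understood for $n$ sufficiently large relative to $r,s$ (which is the setting of \cite{DDS1, DDSII}), so that such an index is always available. Everything else is immediate from the definition of $\wt$ and the basis description already established.
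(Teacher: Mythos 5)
Your proof is correct; the paper dismisses this lemma as an ``easy exercise,'' and your argument (reducing to the combinatorial identity on the multiplicities $a_k,b_k$, taking $f=\sum_k\min\{a_k,b_k\}$ for one inclusion and constructing $(\mathbf i,\mathbf j)$ explicitly for the other) is exactly the intended computation. One small remark: the worry in your last step about $n$ being large enough is unnecessary --- the $f$ matched occurrences can always be absorbed into a single index $k$ by adding $f$ to both $a_k$ and $b_k$, which preserves $a_k-b_k=\lambda_k$ whether or not $\lambda$ has a zero component, so the construction works for any $n\ge 1$.
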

\begin{proof} Easy exercise.\end{proof}

\begin{Lemma}\label{phila} Given $r, s, n\in \mathbb Z^{>0}$ with $n\ge r+s$, let  $\Lambda^+(r, s)=\Lambda(r, s)\cap \mathfrak X^+(n)$.
\begin{enumerate}\item There  is a bijection  $\phi: \Lambda_{r, s}\rightarrow \Lambda^+(r, s)$;
 \item If $\Lambda=\{\lambda+s\omega\mid
\lambda\in \Lambda^+(r, s)\}$ with  $\omega=(1, 1, \cdots, 1)\in
\mathbb Z^n$, then $\Lambda$  is an
 ideal of $\Lambda^+(n,
r+(n-1)s)$ in the sense that $\lambda\in \Lambda$ if
$\lambda\trianglelefteq \mu$ for some $\mu\in \Lambda$.\end{enumerate}
\end{Lemma}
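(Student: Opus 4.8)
The plan is to make the bijection in (a) completely explicit, and then for (b) to rewrite membership in $\Lambda$ as a family of linear inequalities that is visibly closed under the dominance order.

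For (a), given $(f,\lambda)\in\Lambda_{r,s}$ with $\lambda=(\lambda^{(1)},\lambda^{(2)})$, $\lambda^{(1)}\in\Lambda^+(r-f)$, $\lambda^{(2)}\in\Lambda^+(s-f)$, I would set $\phi(f,\lambda)$ to be the length-$n$ integer sequence obtained by listing the parts of $\lambda^{(1)}$ in weakly decreasing order, then enough zeros, then the parts of $\lambda^{(2)}$ negated and listed in weakly increasing order (so $-\lambda^{(2)}_1$ is the last entry). This is well defined: $\lambda^{(1)}$ has at most $r-f$ nonzero parts and $\lambda^{(2)}$ at most $s-f$, so together at most $r+s-2f\le r+s\le n$ parts, leaving room for the zeros; the sequence lies in $\mathfrak X^+(n)$ by inspection; and it lies in $\Lambda(r,s)$ since its positive parts sum to $|\lambda^{(1)}|=r-f$ and its negative parts sum to $-|\lambda^{(2)}|=f-s$. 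For the inverse, send $\mu\in\Lambda^+(r,s)$ to $(f,(\lambda^{(1)},\lambda^{(2)}))$ where $\lambda^{(1)}$ is the partition formed by the positive parts of $\mu$, $\lambda^{(2)}$ the partition formed by negating the negative parts of $\mu$, and $f:=r-|\lambda^{(1)}|$; the defining condition of $\Lambda(r,s)$ guarantees $0\le f\le\min\{r,s\}$. Since a dominant $\mu$ is exactly its positive parts followed by zeros followed by its negative parts, the two assignments are mutually inverse. The point worth stressing is that $f$ is redundant data, recovered from $\mu$ as $r$ minus the sum of its positive parts; this is what makes $\phi$ well defined and injective.

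For (b), I would first check $\Lambda\subseteq\Lambda^+(n,r+(n-1)s)$: for $\lambda\in\Lambda^+(r,s)$ the sequence $\lambda+s\omega$ is weakly decreasing, has total sum $|\lambda|+ns=(r-s)+ns=r+(n-1)s$, and has non-negative entries because the least entry of $\lambda$ is at least $-(s-f)\ge -s$. The key step is then the reformulation, valid for $\mu\in\Lambda^+(n,r+(n-1)s)$:
\[
\mu\in\Lambda\quad\Longleftrightarrow\quad \sum_{i=1}^{k}\mu_i\le r+ks\ \text{ for all }\ 1\le k\le n.
\]
To prove it, put $\lambda=\mu-s\omega$, which is automatically weakly decreasing with $\sum_i\lambda_i=r-s$. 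For such $\lambda$ one sees that $\lambda\in\Lambda(r,s)$ is equivalent to requiring that the sum $P$ of the positive parts of $\lambda$ satisfy $P\le r$: the parameter in the definition of $\Lambda(r,s)$ is forced to be $f=r-P$, and once $\sum_i\lambda_i=r-s$ is imposed, all of $0\le f\le\min\{r,s\}$ reduces to $P\le r$, using $P\ge\sum_i\lambda_i=r-s$. Finally, since $\mu$ is weakly decreasing, the indices $i$ with $\mu_i>s$ form an initial segment $\{1,\dots,k_0\}$, so $P=\sum_{i=1}^{k_0}(\mu_i-s)=\max_{k}\sum_{i=1}^{k}(\mu_i-s)$, and the inequality $P\le r$ is precisely the displayed family.

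With the reformulation in hand, (b) is immediate: if $\nu\in\Lambda^+(n,r+(n-1)s)$ and $\nu\trianglelefteq\mu$ for some $\mu\in\Lambda$, then for every $k$ we have $\sum_{i=1}^{k}\nu_i\le\sum_{i=1}^{k}\mu_i\le r+ks$, so $\nu\in\Lambda$. I expect the only mildly delicate part to be the bookkeeping in the reformulation — checking that $f$ is determined and that the constraints defining $\Lambda(r,s)$ are automatic once the total sum is pinned to $r-s$; the rest is a direct unwinding of the definitions.
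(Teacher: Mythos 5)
Your proof is correct and follows the same route as the paper: the bijection $\phi$ you write down is exactly the one the paper uses, and the paper dismisses (b) as a ``straightforward computation,'' which your reformulation of $\Lambda$ by the inequalities $\sum_{i=1}^{k}\mu_i\le r+ks$ carries out cleanly (the only cosmetic point is the degenerate case where $\mu-s\omega$ has no positive parts, where $P=0$ rather than the stated maximum, but the equivalence $P\le r$ is unaffected since $r>0$).
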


\begin{proof} Suppose  $(f, \lambda)\in \Lambda_{r, s}$ such that
$\lambda=(\lambda^{(1)}, \lambda^{(2)})$,  $l(\lambda^{(1)})=k$ and
$\l(\lambda^{(2)})=\ell$, where $l(\lambda^{{(i)}})$ is the maximal index $j$ such that  $\lambda^{(i)}_j\neq 0$.   Since $n\ge r+s$,  the required bijection $\phi$  sends
$(f, \lambda)$ to $\phi(f, \lambda)$ where
$$\phi(f, \lambda) =(\underset{n} {\underbrace{\lambda^{(1)}_1, \lambda^{(1)}_2, \cdots,
\lambda^{(1)}_{k},0, \cdots, 0, -\lambda^{(2)}_{\ell}, \cdots,
-\lambda^{(2)}_1})}.$$  This proves (a). One can verify (b) by
straightforward computation. \end{proof}

Let $\kappa$ be a field which is a $\mathcal Z$-algebra, let $\U_\kappa=\U_{\mathcal Z}\otimes_{\mathcal Z} \kappa$.
By abuse of notations, we denote $E_i^{(\ell)}$ (resp.,  $F_i^{(\ell)}$ ) by $E_i^{(\ell)}\otimes 1_\kappa $ (resp.,  $F_i^{(\ell)}\otimes 1_\kappa$).

Suppose  $M$ is  a finite dimensional  left $\U_{ \kappa}$-module.  If $0\neq v\in M_\lambda$ for some $\lambda\in \mathfrak {X}^+(n)$,   such that $E_i^{\ell}/[\ell]! v=0$, $\forall i, \ell, 1\le i\le
n-1$ and $\ell>0$, then $v$ is called a \textsf { highest weight } (or \textsf{singular}) vector of $M$ with highest weight $\lambda$.

In the remaining part of this section, we
want to classify singular vectors of  $V^{r, s}_\kappa =V^{\otimes r}\otimes W^{\otimes s}\otimes \kappa\cong V_\kappa^{\otimes r}\otimes W_\kappa^{\otimes s}$ over $\kappa$,
provided that $n\ge r+s$. Since we are going to use Dipper-Doty-Stoll's results in \cite{DDS1, DDSII}, we consider their presentation of  $\mathscr B_{r, s}$ with  $\rho=q^{n}$ over $\mathcal Z$.
As mentioned before, Dipper-Doty-Stoll's presentation for $\mathscr B_{r, s}$ can be obtained from that in section~2 by using  $q^{-1}$ and $\rho^{-1}$ instead of
$q$, $\rho$, respectively.  In this case, we still have $\rho^{-1}=(q^{-1})^n$.

\begin{Prop}\cite{DDS1}\label{bmodule}  Let $\mathscr B_{r, s}$ be the quantized walled Brauer algebra over $\mathcal Z$ with defining parameter $\rho=q^{n}$.
Then  $V^{ r, s}$ is a right
${\mathscr{B}}_{r,s}$-module over $\mathcal Z$ such that, for any  $(\mathbf i, \mathbf j) \in I(n,
r)\times I^*(n, s)$,

\begin{enumerate} \item
$v_{\mathbf i|\mathbf j}
e_1=\delta_{i_1,j_1}q^{-n-1+2i_1}\sum_{s=1}^n  v_{\hat{\mathbf
i}}\otimes v_{s}\otimes v_{s}^*\otimes v_{\hat {\mathbf j}}^*$,
\item $ v_{\mathbf i|\mathbf j}g_{k}=q^{-1} v_{\mathbf i|\mathbf j}$, ( resp.,   $v_{\mathbf i|\mathbf j}g_k^*=q^{-1} v_{\mathbf i|\mathbf j}$), if $i_{k}=i_{k+1}$, (resp., $j_k=j_{k+1}$),
\item $ v_{\mathbf i|\mathbf j}g_{k}=v_{\mathbf i s_{k}|\mathbf j}$, (resp., $v_{\mathbf i|\mathbf j}g_k^*=v_{\mathbf i|\mathbf j s^*_{k}}$ ) if  $i_{k}<i_{k+1}$, (resp., $j_k>j_{k+1}$),
\item $ v_{\mathbf i|\mathbf j}g_{k}=v_{\mathbf i s_{k}|\mathbf j}+(q^{-1}-q)v_{\mathbf i|\mathbf j}$, (resp.  $v_{\mathbf i|\mathbf j}g_k^*=v_{\mathbf i|\mathbf j s^*_{k}}+(q^{-1}-q)v_{\mathbf i|\mathbf j}$),
if  $i_{k}>i_{k+1}$, (resp., $j_k<j_{k+1}$),
\end{enumerate}
where $\hat{\mathbf i}$ (resp., $\hat{\mathbf j})$ is obtained
from $\mathbf i$ (resp., $\mathbf j$) by dropping $i_1$ (resp.,
$j_1$).\end{Prop}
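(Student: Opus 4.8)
The plan is to verify that the formulas in (a)--(d) define a right action of $\mathscr B_{r,s}$ on $V^{r,s}$ over $\mathcal Z$, i.e.\ that the assignments respect all the defining relations listed in Section~2 (after the substitution $q\mapsto q^{-1}$, $\rho\mapsto \rho^{-1}$, so that here $\rho^{-1}=(q^{-1})^n$, equivalently $\rho=q^n$). Since $\U_{\mathcal Z}$ acts on $V$ and $W$ by Lemma~\ref{natural1}, and the comultiplication is coassociative, $\U_{\mathcal Z}$ acts on $V^{r,s}=V^{\otimes r}\otimes W^{\otimes s}$; the content of the proposition is that the $\mathscr B_{r,s}$-action commutes with this and that it is well defined. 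First I would treat the Hecke-type generators: the operators $g_k$ (acting on tensor positions $k,k+1$ of the $V^{\otimes r}$ factor) and $g_k^*$ (acting on positions $k,k+1$ of the $W^{\otimes s}$ factor) are the standard $R$-matrix / Jimbo-type operators, so relations $(g_i-q^{-1})(g_i+q)=0$ (note the parameter switch), the braid relations, and the far-commutation $g_ig_j=g_jg_i$ for $|i-j|>1$ are the classical verification for Hecke actions on tensor powers of the natural module and its dual; I would either cite this (it is exactly the setup of \cite{DDS1}) or check it case-by-case on basis vectors $v_{\mathbf i|\mathbf j}$ according to whether the relevant indices are equal, increasing, or decreasing. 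The relation $g_ig^*_j=g^*_jg_i$ is immediate since $g_i$ and $g^*_j$ act on disjoint tensor factors.

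Next I would handle the relations involving $e_1$. The key computation is the action of $e_1$ on positions: position $1$ of $V^{\otimes r}$ (the rightmost $V$-tensorand, carrying index $i_1$) and position $1$ of $W^{\otimes s}$ (the leftmost $W$-tensorand, carrying index $j_1$). From formula (a), $e_1$ kills $v_{\mathbf i|\mathbf j}$ unless $i_1=j_1$, and otherwise replaces $v_{i_1}\otimes v^*_{j_1}$ by $q^{-n-1+2i_1}\sum_{s=1}^n v_s\otimes v_s^*$. I would first check $e_1^2=\delta e_1$: applying $e_1$ again to $v_s\otimes v_s^*$ picks up the scalar $q^{-n-1+2s}$, and summing $\sum_{s=1}^n q^{-n-1+2s}=q^{-n-1}\cdot\frac{q^{2(n+1)}-q^2}{q^2-1}=\frac{q^{n}-q^{-n}}{q-q^{-1}}$, which is $\delta$ after the parameter switch (recall $\delta=(\rho-\rho^{-1})/(q-q^{-1})$ and in the switched conventions $\rho^{-1}=q^{-n}$, so $\delta$ becomes $(q^{-n}-q^{n})/(q^{-1}-q)=(q^n-q^{-n})/(q-q^{-1})$). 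Then I would check $g_ie_1=e_1g_i$ and $g^*_ie_1=e_1g^*_i$ for $i\neq 1$: here $g_i$ (resp.\ $g^*_i$) acts on positions disjoint from position $1$, hence commutes with the local operator $e_1$ trivially. The relation $e_1g_1e_1=\rho e_1=e_1g^*_1e_1$: one computes $e_1g_1e_1$ by first applying $e_1$, then $g_1$ (which now acts on the summand $\sum_s v_s\otimes v_{i_2}'\otimes\cdots$ — more precisely on positions $1,2$ of the $V$-factor, i.e.\ on $v_s\otimes v_{i_2}$), then $e_1$ again; the scalar that accumulates should be $\rho=q^n$ in the switched convention. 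This is the most delicate bookkeeping step because one must carefully track the three cases in the $g_1$-action and the $q^{-n-1+2s}$ weights in the double sum; I expect the sum to telescope/collapse to a single scalar times $e_1$.

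Finally there are the two "tangle" relations (e) and (f) in the definition, e.g.\ $e_1g_1^{-1}g^*_1e_1g_1=e_1g_1^{-1}g^*_1e_1g^*_1$ and its mirror. For these I would express $g_1^{-1}$ via the quadratic relation ($g_1^{-1}=g_1-(q^{-1}-q)$ in the switched convention, since $(g_1-q^{-1})(g_1+q)=0$ gives $g_1^{-1}=g_1+q-q^{-1}$), reduce everything to compositions of the already-understood local operators on positions $1,2$ of the $V$-factor and positions $1,2$ of the $W$-factor, and verify equality on basis vectors. The main obstacle is purely computational stamina: there is no conceptual difficulty once one sees that all generators act through operators localized on at most the first two tensor positions of each side, so that all relations reduce to finitely many identities among $n\times n$ or $n^2\times n^2$ matrices; but relations (e), (f), and $e_1g_1e_1=\rho e_1$ require patient case analysis over the orderings of $i_1,i_2$ and $j_1,j_2$ and careful handling of the $q$-powers $q^{-n-1+2i_1}$. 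I would organize the check by fixing the indices outside positions $1,2$ (they are spectators) and running the six or so sub-cases; alternatively, since this proposition is attributed to \cite{DDS1}, I would simply cite that reference for the module structure and only spell out the translation between their parameter conventions and ours, as flagged in Remark~\ref{DipperS}.
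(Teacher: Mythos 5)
Your overall strategy---verify the defining relations of $\mathscr B_{r,s}$ (in the switched convention $q\mapsto q^{-1}$, $\rho\mapsto\rho^{-1}$) directly on the basis $\{v_{\mathbf i|\mathbf j}\}$, localizing every generator to at most two tensor positions---is a legitimate proof, but it is genuinely different from what the paper does: the paper offers no verification at all, simply citing \cite{DDS1, DDSII} and recording, in the remark following the statement, the dictionary between conventions (the reversed tensor order in $v_{\mathbf i}$ and the identification of $e_1$, $g_i$, $g_j^*$ with the operators $E$, $S_{r-i}$, $\hat S_{r+j}$ of \cite{DDS1}). Your closing sentence---cite \cite{DDS1} and only spell out the translation---is exactly the paper's route. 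The direct check buys self-containedness at the cost of the case analysis you describe; the citation buys brevity but makes the convention dictionary the only thing one must get right. Where you do carry out computations they are correct: $\sum_{s=1}^n q^{-n-1+2s}=[n]=(q^n-q^{-n})/(q-q^{-1})$, which is $\delta$ for $\rho=q^n$, verifies $e_1^2=\delta e_1$; and $g_1^{-1}=g_1+(q-q^{-1})$ follows from the switched quadratic relation.

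Two cautions before you commit to the full case analysis. First, the scalar in $e_1g_1e_1=\rho e_1$ after the switch is $\rho^{-1}=q^{-n}$, not $q^{n}$ as you assert: the module must satisfy the section-2 relations with $\rho$ replaced by $\rho^{-1}$. The telescoping you predict does occur on the starred side: for $r=1$, $s=2$ one finds
$v_{\mathbf i|\mathbf j}e_1g_1^*e_1=\bigl(q^{-n-2+2j_2}+(q^{-1}-q)\sum_{s<j_2}q^{-n-1+2s}\bigr)v_{\mathbf i|\mathbf j}e_1=q^{-n}\,v_{\mathbf i|\mathbf j}e_1$. Second, if you run the same computation for $e_1g_1e_1$ applying rules (c) and (d) exactly as printed (plain swap when $i_1<i_2$), the surviving geometric sum ranges over $s>i_2$ and does \emph{not} collapse to a constant; it collapses to $q^{-n}$ only with the opposite orientation of the inequalities on the $V$-side, which is what the reversed tensor order $v_{i_r}\otimes\cdots\otimes v_{i_1}$ naturally produces. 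So you must pin down the orientation convention for the unstarred $R$-matrix before starting; taking the displayed inequalities at face value, your verification would fail precisely at the step you flag as the most delicate.
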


\begin{rem} In this paper,  $v_{\mathbf i}=v_{i_r}\otimes\cdots \otimes  v_{i_1}$ for $\mathbf i\in I(n, r)$, whereas $v_{\mathbf i}=v_{i_1}\otimes\cdots \otimes v_{i_r}$ in  \cite{DDS1}.
  In \cite[p6]{DDS1}, Dipper, Doty and Stoll  defined operators $E, S_i, \hat S_j$ acting on $V^{r, s}$. It is pointed in \cite[Corollary~1.9]{DDSII} that $e_1$, $g_i$ and $g_j^*$ act on
$V^{r, s}$ via $E, S_{r-i}$ and $\hat S_{r+j}$, respectively. So, the condition for $g_k$ acting on $V^{r, s}$ is the same as that for $S_k$ in \cite{DDS1}.\end{rem}


\begin{Theorem} \cite[Theorem~1.4]{DDS1},\cite[Theorem~6.1, Corollary~6.2]{ DDSII}\label{ration} Suppose $r, s\in \mathbb Z^{> 0}$. Then $V^{r, s}$ is a ($\U_{\mathcal Z},
\mathscr B_{r,s})$-bimodule.  Moreover, \begin{enumerate}
\item there is an algebra  epimorphism $\varphi: \U_{\mathcal Z}\twoheadrightarrow \text{End}_{\mathscr B_{r, s}} ( V^{r,s})$;
\item there is an
algebra epimorphism $\psi: \mathscr B_{r, s}\twoheadrightarrow
\text{End}_{\U_\mathcal Z} ( V^{r, s})$. Further, $\psi$ is an
isomorphism if and only if  $n\ge r+s$.
\end{enumerate}
\end{Theorem}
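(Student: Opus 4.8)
The plan is to reconstruct the argument of Dipper--Doty--Stoll in \cite{DDS1, DDSII}. There are three things to do: establish that the two actions commute, prove the two surjectivity statements, and determine when $\psi$ is injective. Throughout I take $\mathscr B_{r,s}$ in the Dipper--Doty--Stoll normalization with $\rho=q^{n}$, as in Proposition~\ref{bmodule}.

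First I would check the bimodule structure. The operators $g_k$ and $g_k^{*}$ of Proposition~\ref{bmodule} are the $R$-matrix (Hecke) operators acting on adjacent tensor factors of $V^{\otimes r}$ and of $W^{\otimes s}$, and these are $\U_{\mathcal Z}$-linear because the $R$-matrix is a $\U_q$-intertwiner. The operator $e_1$ is, up to the scalar bookkeeping in Proposition~\ref{bmodule}(a), the composite on the last two tensor factors of the evaluation $V\otimes W\to\mathcal Z$ with the coevaluation $\mathcal Z\to\sum_{t}v_{t}\otimes v_{t}^{*}$, and both of these are morphisms of $\U_{\mathcal Z}$-modules because $W=V^{*}$ is the monoidal dual of $V$. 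Hence every generator of $\mathscr B_{r,s}$ acts on $V^{r,s}$ by a $\U_{\mathcal Z}$-module endomorphism; that these endomorphisms satisfy the defining relations of $\mathscr B_{r,s}$ is exactly the content of Proposition~\ref{bmodule}. This gives the $(\U_{\mathcal Z},\mathscr B_{r,s})$-bimodule structure and, with it, the algebra maps $\varphi$ and $\psi$.

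For surjectivity I would pass to the fraction field $\mathbb Q(q)$, where the finite-dimensional $\U_{\mathbb Q(q)}(\mathfrak{gl}_{n})$-modules occurring here are semisimple. Classical quantum Schur--Weyl duality gives that $\U_{\mathbb Q(q)}$ surjects onto $\End_{\mathscr H_{r}}(V^{\otimes r})$ and onto $\End_{\mathscr H_{s}}(W^{\otimes s})$; combining this with the contraction maps produced by $e_1$ and a rational-Schur-algebra argument (the wall separates the $V$- and $W$-blocks), one upgrades it to surjectivity of $\varphi_{\mathbb Q(q)}\colon\U_{\mathbb Q(q)}\to\End_{\mathscr B_{r,s,\mathbb Q(q)}}(V^{r,s})$. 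The double centralizer theorem in the semisimple setting then yields surjectivity of $\psi_{\mathbb Q(q)}$ as well, with image the full centralizer. It remains to descend to $\mathcal Z$: since $V^{r,s}$ is $\mathcal Z$-free, $\End_{\mathscr B_{r,s}}(V^{r,s})$ and $\End_{\U_{\mathcal Z}}(V^{r,s})$ are $\mathcal Z$-lattices inside their $\mathbb Q(q)$-counterparts, and it suffices to exhibit $\mathcal Z$-spanning sets whose $\mathbb Q(q)$-spans already exhaust those counterparts: for $\varphi$ one uses images of weight-idempotent and divided-power elements of $\U_{\mathcal Z}$, for $\psi$ images of the cellular basis $\mathcal C$ of Theorem~\ref{celb}.

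Finally, for the isomorphism criterion: since $\psi$ is surjective and both $\mathscr B_{r,s}$ (of rank $(r+s)!$) and $\End_{\U_{\mathcal Z}}(V^{r,s})$ are $\mathcal Z$-free, $\psi$ is an isomorphism if and only if $\dim_{\mathbb Q(q)}\End_{\U_{\mathbb Q(q)}}(V^{r,s})=(r+s)!$. Decomposing $V^{r,s}$ over $\U_{\mathbb Q(q)}$, this dimension is $\sum_{\mu}m_{\mu}^{2}$, the sum over dominant weights $\mu$ occurring in $V^{r,s}$ with multiplicity $m_{\mu}$. When $n\ge r+s$, Lemma~\ref{phila}(a) identifies these weights with $\{\phi(f,\lambda)\mid(f,\lambda)\in\Lambda_{r,s}\}$; one then matches $m_{\phi(f,\lambda)}$ with $\dim C(f,\lambda)$, and since $\mathscr B_{r,s,\mathbb Q(q)}$ is semisimple here (Theorem~\ref{semimain}: $e=\infty$, $\delta=[n]\neq 0$, and $\rho^{2}=q^{2n}$ with $n>r+s-2$), the sum equals $\dim_{\mathbb Q(q)}\mathscr B_{r,s,\mathbb Q(q)}=(r+s)!$. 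When $n<r+s$, some $\phi(f,\lambda)$ with small $f$ would require a weight with more than $n$ nonzero coordinates and so cannot occur in $V^{r,s}$; the multiplicities redistribute and $\sum_{\mu}m_{\mu}^{2}<(r+s)!$, so $\ker\psi\neq 0$. I expect the main obstacle to be precisely this last count --- pinning down the $\U_{\mathbb Q(q)}$-decomposition of $V^{r,s}$ sharply enough to get $\sum_{\mu}m_{\mu}^{2}=(r+s)!$ when $n\ge r+s$ and its strict failure otherwise --- together with making the integral descent of the surjectivity statements fully rigorous, since reductions of centralizer algebras can a priori shrink.
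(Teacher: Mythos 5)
This theorem is not proved in the paper at all --- it is quoted from \cite{DDS1, DDSII} --- so the comparison is really with Dipper--Doty--Stoll's arguments. Your outline of the bimodule structure and of the generic ($\mathbb Q(q)$) double-centralizer argument is sound, and your count $\sum_\mu m_\mu^2=(r+s)!$ for $n\ge r+s$ (with the $f=0$, $\lambda=((1^r),(1^s))$ weight failing to occur when $n<r+s$, so that $\ker\psi\ne 0$) is the right way to settle the isomorphism criterion once surjectivity over $\mathcal Z$ is in hand.

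The genuine gap is the integral descent of the two surjectivity statements, and your proposed fix does not work as stated: exhibiting elements of the image whose $\mathbb Q(q)$-span exhausts $\End_{\mathscr B_{r,s}}(V^{r,s})$ (resp. $\End_{\U_{\mathcal Z}}(V^{r,s})$) only shows that the image is a full-rank $\mathcal Z$-sublattice, not that it is the whole lattice --- compare $2\mathbb Z\subset\mathbb Z$. Since the paper immediately uses the theorem after base change to an arbitrary field $\kappa$ (``In particular, Theorem~\ref{ration} holds over an arbitrary field''), a finite-index image is exactly what must be ruled out, and no generic argument can do this. Dipper--Doty--Stoll's proofs are genuinely integral and combinatorial: surjectivity of $\varphi$ comes from the standard (bideterminant) basis theorem for the coalgebra $A_q(n;r,s)$ dual to $\text{End}_{\mathscr B_{r,s}}(V^{r,s})$, via its embedding into $A_q(n,r+(n-1)s)$ (the ingredients quoted here as \eqref{bidet} and Proposition~\ref{schurrational}); surjectivity and faithfulness of $\psi$ for $n\ge r+s$ come from an explicit $\mathcal Z$-basis analysis of $\text{End}_{\U_{\mathcal Z}}(V^{r,s})$ compatible with base change, not from specializing the generic double-centralizer statement. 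You correctly flag this descent as the main obstacle, but the proposal supplies no mechanism for it, and the mechanism it does suggest is insufficient.
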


In particular, Theorem~\ref{ration} holds over an arbitrary field $\kappa$.
The endomorphism algebra $\text{End}_{\mathscr B_{r, s}} ( V^{r,s})$ , which will be  denoted by $S(n; r,s)$, is called the
 rational $q$-Schur algebra in \cite{DDS1}.

 In the remaining part of this section,  unless otherwise stated, we assume $n\ge r+s$.
 We want to classify singular vectors of $V_\kappa^{r, s}$ with $n\ge r+s$ so as to establish explicit relationship between
 Weyl modules, indecomposable tilting modules of rational $q$-Schur algebras and cell modules, principal indecomposable modules of
 $\mathscr B_{r, s}$. This will give the required result on the  decomposition numbers of $\mathscr B_{r, s, \kappa}$.
  If we allow $s=0$, then
 $S(n; r,s)$ is known as $q$-Schur algebra $S(n, r)$ in \cite{DJ}.

In \cite{Dk1}, Donkin has proved that rational Schur algebras in \cite{DD} are
generalized Schur algebras. Note that generalized Schur algebras are always
quasi-hereditary in the sense of \cite{CPS}. So, rational Schur algebras are quasi-hereditary.
The same is true for their quantizations. It is natural to modify his arguments to prove that rational $q$-Schur algebras are quasi-hereditary over $\kappa$.
We need this fact when we classify singular vectors of $V^{r, s}$. Motivated by  Dipper and  Doty's work on quasi-heredity of   rational Schur algebras in \cite{DD},
 we use arguments on cellular algebras to prove this fact.

 For the simplification of notation,
we  use $\tilde \lambda$
instead of $\phi(f, \lambda)$ in the remaining part of this paper, where $(f, \lambda)\in\Lambda_{r, s}$ and   $\phi(f, \lambda)$ is given in Lemma~\ref{phila}.
 Dipper et.al~\cite{DDS1}
defined column semistandard rational $\lambda$-tableaux $(\s_1, \s_2)$ in \cite[6.1]{DDS1}\footnote{In fact, Dipper et.al  called $(\s_1, \s_2)$ standard rational tableaux, which are different from usual
 standard tableaux in section~2.}.  They proved
that each  $(\s_1, \s_2)$ corresponds to a unique column semistandard
$\gamma$-tableau $\u$ and vice versa, where $\gamma'=s\omega+\tilde
{\lambda'}$. The transpose of  this column   semistandard $\gamma$-tableau is
in fact  the usual semistandard $\gamma'$-tableau.  In the following,
we denote $\gamma$ by $\gamma_\lambda$ to emphasis the  $(f,
\lambda)$ in $\Lambda_{r, s}$.


Let $(\s_1,\s_2), (\t_1,\t_2)$ be two  column semistandard rational  $\lambda$-tableaux.
Dipper etc~\cite{DDS1} introduced rational bideterminants
$((\s_1,\s_2)\mid (\t_1,\t_2))\in A_q(n; r, s)$ where $ A_q(n; r,
s)$ is the linear dual of $S(n; r, s)$. It is proved in
\cite[Theorem~6.9]{DDS1} that the set of all bideterminants of column semistandard rational $\gamma_\lambda$-tableaux
with $(f, \lambda)\in \Lambda_{r, s} $ forms a $\mathcal Z$-basis of $A_q(n; r,
s)$.

Let $ A_q(n,
r+(n-1)s)$ be  the linear dual of  $S(n,
r+(n-1)s)$ in \cite{DJ}.  For column semistandard
$\lambda'$-tableau $\u, \v$ with  $\lambda\in \Lambda^+(n, r+(n-1)s)$, let $(\u\mid \v)\in A_q(n, r+(n-1)s)$ be the
corresponding bideterminant in \cite{DDS1}. In fact, it is the same as the bideterminant in \cite{HZ}, which is defined via  the semistandard $\lambda$-tableaux
$\s, \t$,  the transposes of $\u, \v$, respectively.

It has been proved in~\cite{HZ} that the
set of  all bideterminants $(\u\mid \v)\in A_q(n, r+(n-1)s)$ of column semistandard  $\lambda$-tableaux
$(\u\mid \v)$ with $\lambda'\in \lambda(n, r+(n-1)s)$  forms a
$\mathcal Z$-basis of $A_q(n, r+(n-1)s)$.


Dipper et.al~\cite[6.5]{DDS1} have proved that   $ A_q(n; r, s)$
can be embedded into $A_q(n, r+(n-1)s)$ via the linear map such that
\begin{equation}\label{bidet}
\iota  ((\s_1,\s_2)\mid (\t_1,\t_2))=q^c (\u\mid \v)\end{equation}
for some integer $c$. Here $\u, \v$ are column semistandard
$\gamma_\lambda$-tableaux with $(f, \lambda)\in \Lambda_{r, s}$, which correspond to column semistandard rational $\lambda$-tableaux $(\s_1, \s_2)$ and $(\t_1,
\t_2)$, respectively. In this paper, we do not need the details about this.
\medskip

\begin{Prop}~\cite[Corollary~6.1]{DDS1}\label{schurrational} Let $\pi$ be the linear dual of $\iota$. Then
 $\pi: S(n, r+(n-1)s)\twoheadrightarrow S(n; r,
s)$ is an algebra epimorphism over $\mathcal Z$.  \end{Prop}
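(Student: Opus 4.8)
The plan is to deduce the epimorphism $\pi$ directly from the embedding $\iota$ of \eqref{bidet} by a standard duality argument, together with a compatibility check on the coalgebra structures. Since $S(n;r,s)$ and $S(n,r+(n-1)s)$ are both finite-dimensional (indeed free of finite rank) over $\mathcal Z$, and $A_q(n;r,s)$, $A_q(n,r+(n-1)s)$ are their $\mathcal Z$-linear duals, I would first observe that each $A_q$ carries a natural coalgebra structure dual to the algebra structure on the corresponding Schur algebra, and that taking linear duals is an (anti-)equivalence between the category of finite free $\mathcal Z$-coalgebras and that of finite free $\mathcal Z$-algebras. Thus, to produce an algebra epimorphism $\pi\colon S(n,r+(n-1)s)\twoheadrightarrow S(n;r,s)$, it suffices to show that the injective $\mathcal Z$-linear map $\iota$ of \eqref{bidet} is a \emph{coalgebra} homomorphism whose image is a $\mathcal Z$-direct summand; dualizing a split injective coalgebra map gives a surjective algebra map.

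The key steps, in order, would be: (1) recall from \cite{DDS1} that $\iota$ sends the bideterminant basis of $A_q(n;r,s)$ (indexed by pairs of column semistandard $\gamma_\lambda$-tableaux with $(f,\lambda)\in\Lambda_{r,s}$) into the bideterminant basis of $A_q(n,r+(n-1)s)$ up to a power of $q$, by \eqref{bidet}, and in particular that $\iota$ is injective with image a $\mathcal Z$-span of part of a $\mathcal Z$-basis, hence a direct summand; (2) verify that $\iota$ is compatible with comultiplication and counit — here one uses that comultiplication of a bideterminant $(\s\mid\t)$ in either coalgebra is given by the same combinatorial formula $\sum_{\u}(\s\mid\u)\otimes(\u\mid\t)$ summed over (column semistandard) tableaux of the same shape, so that the scalar $q^c$ in \eqref{bidet} matches on both sides after the Laurent-monomial bookkeeping in the embedding $\Lambda^+(r,s)\hookrightarrow\Lambda^+(n,r+(n-1)s)$, $\lambda\mapsto\lambda+s\omega$, of Lemma~\ref{phila}(b); (3) apply the linear-dual functor $\Hom_{\mathcal Z}(-,\mathcal Z)$, which turns $\iota$ into the desired algebra map $\pi$, and turns "split injective" into "surjective"; (4) note that everything is stated over $\mathcal Z$ and is compatible with base change, so the statement persists over any field $\kappa$.

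The main obstacle I anticipate is step (2): checking that $\iota$ respects the coalgebra structure, i.e. that the power $q^c$ appearing in \eqref{bidet} is genuinely multiplicative under comultiplication and trivial under the counit. This amounts to confirming that the integer $c=c(\s_1,\s_2,\t_1,\t_2)$ produced by the Dipper–Doty–Stoll construction depends only on content data that add up correctly when a bideterminant is split as $\sum_{\u}(\s\mid\u)\otimes(\u\mid\t)$ — in other words that $c$ is "additive in the middle index". This is exactly the content that \cite[6.5]{DDS1} was set up to provide, so in the write-up I would cite that the map \eqref{bidet} is a morphism of coalgebras as established there (or spelled out in \cite[Corollary~6.1]{DDS1}), and restrict the new argument to the purely formal duality statement (steps 1, 3, 4), which is routine. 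Thus the proposition reduces to: $\iota$ is a split injection of $\mathcal Z$-coalgebras $\Longrightarrow$ its dual $\pi$ is a surjection of $\mathcal Z$-algebras, and then base change to $\kappa$.
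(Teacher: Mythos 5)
The paper gives no proof of this proposition---it is imported verbatim as \cite[Corollary~6.1]{DDS1}---so there is no internal argument to compare against. Your dualization argument (a split injection of finite free $\mathcal Z$-coalgebras $\iota\colon A_q(n;r,s)\hookrightarrow A_q(n,r+(n-1)s)$ dualizes to an algebra surjection $\pi$, the splitting coming from the fact that $\iota$ carries the bideterminant basis to unit multiples $q^c$ of part of a basis) is the standard route and is essentially how the cited source establishes the result; you correctly isolate the one non-formal input, namely that $\iota$ is compatible with comultiplication and counit, and defer it to \cite[6.5]{DDS1}, which is exactly what that reference supplies.
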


Let $\kappa$ be a field which is a $\mathcal Z$-algebra. It is
proved in \cite{DDS1} that  the rational $q$-Schur algebra  over $\kappa$ is isomorphic to $S(n; r,
s)\otimes_{\mathcal Z} \kappa$. For this reason, we identify
$S_\kappa(n; r, s)$ with  $S(n; r, s)\otimes_{\mathcal Z} \kappa$.
The following result follows from  certain results in
\cite{ DDS1}. The classical case has been given in \cite{DD} and \cite{Dk1}. As mentioned before,
it can also follow from  arguments similar to those  in \cite{Dk1}.

\begin{Theorem}\label{qha}  Suppose $n, r, s\in \mathbb Z^{>0}$ with $n\ge r+s$. Then
$S_\kappa(n; r, s)$ is quasi-hereditary over $\kappa$ in the sense
of \cite{CPS}.
\end{Theorem}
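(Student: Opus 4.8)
The plan is to deduce quasi-heredity of $S_\kappa(n;r,s)$ by exhibiting it as a quotient of a $q$-Schur algebra and transporting the relevant structure through the surjection $\pi$ of Proposition~\ref{schurrational}, while simultaneously recording enough about the standard/Weyl filtrations to identify the images of the defining ideals. First I would recall that $S_\kappa(n,r+(n-1)s)$ is quasi-hereditary (indeed a generalized Schur algebra, by \cite{DJ}, \cite{Dk1}), with heredity chain indexed by the poset $\Lambda^+(n,r+(n-1)s)$ ordered by dominance. By Lemma~\ref{phila}(b), the subset $\Lambda=\{\tilde\lambda+s\omega\mid (f,\lambda)\in\Lambda_{r,s}\}$ is a coideal (an ideal in the order-theoretic sense used there) of $\Lambda^+(n,r+(n-1)s)$: it is closed under passing to dominated weights. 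A saturated/coideal subset of the weight poset of a quasi-hereditary algebra cuts out a quotient which is again quasi-hereditary — this is the standard ``Serre subcategory/quotient'' mechanism for highest weight categories (Cline--Parshall--Scott \cite{CPS}), where one quotients by the ideal generated by the idempotents attached to weights outside $\Lambda$. The key claim is that this categorical quotient is exactly $S_\kappa(n;r,s)$, realized concretely via $\pi$.

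The bridge to make that identification precise is the bideterminant bases: on the coalgebra side, $A_q(n;r,s)$ sits inside $A_q(n,r+(n-1)s)$ via $\iota$ of \eqref{bidet}, and by \cite[Theorem~6.9]{DDS1} together with the basis theorem of \cite{HZ}, $\iota$ identifies $A_q(n;r,s)$ (up to the scalars $q^c$) with the span of those bideterminants $(\u\mid\v)$ with shape $\lambda'$ for $\lambda\in\Lambda$. Dualizing, $\pi:S_\kappa(n,r+(n-1)s)\twoheadrightarrow S_\kappa(n;r,s)$ has kernel spanned by the ``matrix units'' supported on shapes outside $\Lambda$; since $\Lambda$ is a coideal, this kernel is precisely the heredity ideal $S_\kappa(n,r+(n-1)s)\,e\,S_\kappa(n,r+(n-1)s)$ for the idempotent $e$ cutting off $\Lambda^c$, and the truncation functor $M\mapsto eM$ (Schur functor onto the small algebra) sends Weyl/costandard modules for weights in $\Lambda$ to Weyl/costandard modules for $S_\kappa(n;r,s)$ and kills the rest. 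Concretely I would index the standard modules $\Delta(\tilde\lambda)$ of $S_\kappa(n;r,s)$ by $\Lambda_{r,s}$ through the bijection $\phi$ of Lemma~\ref{phila}(a), take the chain of ideals pulled back from the heredity chain of the big $q$-Schur algebra intersected with $\Lambda$, and check: each subquotient is, after applying $\pi$, a sum of copies of a projective over $S_\kappa(n;r,s)/(\text{previous ideal})$ of the form $\Delta(\tilde\lambda)^{\oplus m}$, and $\End$ of $\Delta(\tilde\lambda)$ is $\kappa$. All of this is inherited from the already-known quasi-heredity of $S_\kappa(n,r+(n-1)s)$ because truncating a heredity chain along a coideal of weights yields a heredity chain.

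Alternatively — and this is likely how the authors proceed, given the remark that ``we use arguments on cellular algebras'' — one can argue entirely on the cellular side: $S_\kappa(n;r,s)$ is cellular with cell datum coming from the rational bideterminant basis of \cite[Theorem~6.9]{DDS1}, with cell poset $\Lambda_{r,s}$ (via $\phi$), and a cellular algebra is quasi-hereditary iff no cell module has vanishing bilinear form, equivalently iff the number of cell modules equals the number of simple modules. Here the top shapes $\tilde\lambda$ are exactly the dominant rational weights of $V^{r,s}_\kappa$, and since $n\ge r+s$ each weight space $(V^{r,s}_\kappa)_{\tilde\lambda+s\omega}$ contains a nonzero highest-weight vector (by the explicit weight combinatorics: $\tilde\lambda+s\omega$ is a polynomial dominant weight, and the transpose-semistandard-tableau count is nonzero), so every cell module $\Delta(\tilde\lambda)$ is nonzero with a nonzero form — equivalently the Gram determinant over $\mathbb Q(q)$ is nonzero, hence nonzero in $\kappa$ is automatic from semisimplicity over $\mathbb Q(q)$. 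Thus all cell modules survive and cellularity upgrades to quasi-heredity.

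The main obstacle I anticipate is precisely the bookkeeping that makes ``$\pi$ realizes the order-theoretic quotient'' rigorous: one must verify that the kernel of $\pi$ coincides with the span of the bideterminant basis elements of shapes outside $\Lambda$ (not merely contains or is contained in it), which requires matching \eqref{bidet} carefully — tracking that the scalars $q^c$ are units and that $\iota$ really is the inclusion of a basis-subset, as asserted by \cite[6.5, Theorem~6.9]{DDS1} and \cite{HZ} — and then checking that a coideal truncation of a heredity chain is again a heredity chain (standard, but needs the coideal property, which is exactly Lemma~\ref{phila}(b)). If one instead runs the cellular-algebra argument, the obstacle shifts to producing, for every $(f,\lambda)\in\Lambda_{r,s}$, an explicit nonzero singular vector of weight $\tilde\lambda+s\omega$ in $V^{r,s}_\kappa$ witnessing that the corresponding cell module is nonzero; but this nonvanishing is forced by base change from the semisimple generic situation over $\mathbb Q(q)$ — where Schur--Weyl duality (Theorem~\ref{ration}) and the classification of weights (Lemmas~\ref{bij},~\ref{phila}) guarantee each $\Delta(\tilde\lambda)\ne 0$ — so in fact no hard computation is needed, only the observation that a $\mathcal Z$-form basis stays a basis after $\otimes_{\mathcal Z}\kappa$. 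I would present the cellular version as the primary proof and cite \cite{DD}, \cite{Dk1} for the classical template.
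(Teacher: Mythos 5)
Your first sketch --- realize $S_\kappa(n;r,s)$ as the quotient of $S_\kappa(n,r+(n-1)s)$ cut out by the ideal $\Lambda$ of Lemma~\ref{phila}(b) and transport the heredity structure through $\pi$ --- is exactly the paper's strategy, and you correctly isolate the crux: identifying $\ker\pi$ with the span of the basis elements whose shapes lie outside $\Lambda$. The paper executes this with the codeterminant basis $Y^{\alpha}_{\u,\v}$ of Cliff--Stokke rather than with ``matrix units'': it shows $\pi(Y^{\alpha}_{\u,\v})=0$ for $\alpha\notin\Lambda$ by pairing against rational bideterminants, invoking \cite[Theorem~12]{Cli} to get $\gamma'_\lambda\unrhd\alpha$ and then the ideal property of $\Lambda$, and finishes by a dimension count. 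Since the codeterminant basis is a standard basis in the sense of Du--Rui and hence cellular, the surviving codeterminants give a cellular basis of $S_\kappa(n;r,s)$ with simples indexed by all of $\Lambda$, and \cite[3.10]{GL} yields quasi-heredity. So that part of your proposal is essentially the paper's proof phrased in CPS truncation language.

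However, the argument you designate as your primary proof has a genuine gap. You claim each cell module of $S_\kappa(n;r,s)$ has nonvanishing bilinear form over $\kappa$ because ``the Gram determinant over $\mathbb Q(q)$ is nonzero, hence nonzero in $\kappa$ is automatic from semisimplicity over $\mathbb Q(q)$,'' so that ``no hard computation is needed, only the observation that a $\mathcal Z$-form basis stays a basis.'' This is false: nonvanishing (let alone nondegeneracy) of the form over the generic field does not survive specialization. If it did, every generically semisimple cellular algebra would be quasi-hereditary over every field, which fails already for $\mathscr B_{r,s,\kappa}$ itself when $\delta=0$ and $r=s$ (Theorem~\ref{main1}(b): the cell module $C(r,(\emptyset,\emptyset))$ has identically zero form) and for Temperley--Lieb and Brauer algebras at $\delta=0$. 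What quasi-heredity of a cellular algebra actually requires is that some \emph{value} of the form be a unit of $\mathcal Z$, hence nonzero in every $\kappa$; this is precisely what the standard-basis (unit-triangularity) property of the codeterminants supplies in the paper's proof, and it is not a consequence of generic semisimplicity. Likewise, exhibiting a nonzero highest-weight vector in the relevant weight space shows the cell module is nonzero, not that its form is nonzero. If you run the cellular route you must import the integral unit-triangularity of codeterminant multiplication from \cite{Cli,DR1}; with that supplied, your argument closes and coincides with the paper's.
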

\begin{proof}  Suppose $\alpha\in
\Lambda^+(n, r+(n-1)s)$. For usual semistandard $\alpha$-tableaux $\u, \v$,
let $Y_{\u,\v}^\alpha\in S_\kappa(n, r+(n-1)s)$ be the codeterminant in \cite[p~48]{Cli}.

We claim that $\pi(Y_{\u,\v}^\alpha)=0$  if $\alpha\not\in \Lambda $, where $\pi$ is given in Proposition~\ref{schurrational} and $\Lambda$ is the ideal of
$\Lambda^+(n, r+(n-1)s)$ defined in Lemma~\ref{phila}.

In fact, if  $\pi(Y_{\u,\v}^\alpha)\neq 0$, we can find  a $(f, \lambda)\in \Lambda_{r, s}$ such that
$\pi(Y_{\u,\v}^\alpha) ((\s_1, \s_2)|(\t_1, \t_2))\neq 0$ for some rational bideterminant  $(\s_1, \s_2)|(\t_1, \t_2)$ associated
to a pair of column standard rational $\lambda$-tableaux $(\s_1, \s_2)$ and $(\t_1, \t_2)$.
So, $Y_{\u, \v}^\alpha ( \u_1\mid \v_1)\neq 0$ where $$q^c (\u_1\mid \v_1)=\iota (\s_1, \s_2)|(\t_1, \t_2)$$
for some integer $c$ (see  (\ref{bidet})).
Note that $(\u_1, \v_1)$ are a pair of semistandard $\gamma'_\lambda$-tableaux (or column semistandard $\gamma_\lambda$-tableaux if we use
the notion of bideterminants in \cite{DDS1}). By\cite[Theorem~12]{Cli}, we have $\gamma'_\lambda\unrhd \alpha$. Since $\Lambda$ is an ideal of $\Lambda^+(n, r+(n-1)s)$
and  $\gamma'_\lambda\in \Lambda$, we have $\alpha\in \Lambda$,  proving the claim.   Counting the dimension of
$S_\kappa(n; r, s)$, we see that the image of each codeterminant
$Y_{\u,\v}^\alpha$ is nonzero in  $S_\kappa(n; r, s)$ if $\alpha\in
\Lambda$. Further, all non-zero of them form a basis of  $S_\kappa(n; r, s)$ and
the kernel of $\pi$ is the $\kappa$-subspace
generated by all $Y_{\u,\v}^\alpha$ with $\alpha\not\in \Lambda$.

It is proved in \cite[Theorem~5.5.1]{DR1} that  the codeterminant basis of a
q-Schur algebra is a standard  basis in the sense of \cite[Definition~1.2.1]{DR1}.
 One can check that the linear map $\tau$ sending any $Y_{\u,\v}^\alpha$ to $Y_{\v,\u}^\alpha$
is the required anti-involution. So, the codeterminant basis of a
q-Schur algebra   is a cellular basis  in the sense of
\cite{GL}.
Therefore, $S_\kappa(n; r, s)$ is a cellular algebra with the
cellular basis which consists of all the images of codeterminants
$Y_{\u,\v}^{\alpha}$ for $\alpha\in \Lambda$. Further, non-isomorphic
irreducible $S_\kappa(n; r, s)$-modules are indexed by $\Lambda$. By
\cite[3.10]{GL},  $S_\kappa(n; r, s)$ is quasi-hereditary in the
sense of \cite{CPS}.\end{proof}


\begin{Defn}\label{vlambda} For each  $(f, \lambda)\in\Lambda_{r,s}$ with
$\lambda=(\lambda^{(1)}, \lambda^{(2)})$, we write  $\lambda'=(\alpha,
\beta)$.  We  define   $ v_{0, \lambda}=v_{\mathbf i_\lambda} \otimes
v^*_{\mathbf j_\lambda}$,  and $v_{f, \lambda}$  for $f\ge 1$ as
$v_{\mathbf i_\lambda}\otimes v^f \otimes v^*_{\mathbf j_\lambda}\in
V^{r,s}$,  where
\begin{enumerate}\item  $ \mathbf i_\lambda=(\alpha_{r-f}, \cdots, 2, 1,  \alpha_{r-f-1}, \cdots, 2, 1, \cdots, \alpha_{1}, \cdots, 2, 1
)$, \item $ \mathbf j_\lambda=(n, n-1, \cdots, n-\beta_1+1,  \cdots, n, n-1,\cdots, n-\beta_{s-f}+1)$,
\item   $v^f= \sum_{k=1}^n v_k\otimes v^{f-1} \otimes v_k^*$ and
$v^1=\sum_{k=1}^n v_k \otimes v_k^*$.\end{enumerate}
\end{Defn}

\begin{Prop}\label{hwv}  Suppose $V_\kappa^{r, s}$ is defined over $\kappa$.
 For each $\t\in \Std(\lambda')$ with  $(f, \lambda)\in \Lambda_{r, s}$ and  $d\in \mathscr D_{r,s}^f$,
  define $v_{\lambda, \t, d}=v_{f, \lambda}\n_{\lambda'} g_{d(\t)} g_d\in V_\kappa^{r,
s}$. Then $ v_{\lambda, \t, d}\in V_\kappa^{r, s}$ is a singular vector with highest weight $\phi(f, \lambda)$.
 \end{Prop}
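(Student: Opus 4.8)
The plan is to verify directly that $v_{\lambda,\t,d}$ is annihilated by all raising operators $E_i^{(\ell)}$ and lies in a single weight space, the weight being $\phi(f,\lambda)$. The weight statement is the easy half: by Lemma~\ref{natural1}, $v_{\mathbf i_\lambda}$ has weight determined by the multiset of its entries and $v^*_{\mathbf j_\lambda}$ by the negatives of its entries, while $v^f$ contributes weight $0$; tracking the definition of $\mathbf i_\lambda$ and $\mathbf j_\lambda$ in Definition~\ref{vlambda} gives exactly $\phi(f,\lambda)$ as computed in Lemma~\ref{phila}. Since by Proposition~\ref{bmodule} the operators $g_k$, $g_k^*$, $e_1$ (hence all of $\n_{\lambda'}$, $g_{d(\t)}$, $g_d$) act on $V_\kappa^{r,s}$ only by permuting tensor positions within $V^{\otimes r}$ and within $W^{\otimes s}$ (together with the contraction map $e_1$, which also preserves weight), the whole vector $v_{\lambda,\t,d}$ stays in the weight space $\phi(f,\lambda)$.

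For the singular-vector condition, I would first treat the ``classical'' factor $v_{0,\lambda}\n_{\lambda'}$. The point is that $v_{\mathbf i_\lambda}\n_{\alpha}$ (using the columns $\alpha$ of $\lambda^{(1)}$) is, up to a scalar, a highest weight vector of weight $\widetilde{\lambda^{(1)}}$ in $V^{\otimes(r-f)}$: this is the standard fact that column-antisymmetrizing the tableau-ordered basis vector built from $\t_{\lambda'}$ produces the $\U_q(\mathfrak{gl}_n)$-highest weight vector in the Weyl module (the entries strictly decrease down each column, so $E_i$ kills it). Dually, $v^*_{\mathbf j_\lambda}\n_{\beta}$ is a highest weight vector in $W^{\otimes(s-f)}$ for the second component; here one uses part (b) of Lemma~\ref{natural1} for the $E_i$-action on $W$. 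Next I would handle the ``fake'' factor $v^f$: by construction $v^f=\sum_k v_k\otimes v^{f-1}\otimes v_k^*$ is a $\U_\kappa$-invariant (it is the image of $1$ under the coevaluation $\kappa\to V\otimes W$ iterated $f$ times), so $E_i v^f=0$ for all $i$. Combining via the coproduct formula $\Delta(E_i)=E_i\otimes K_i^{-1}+1\otimes E_i$ (applied iteratively across the three tensor blocks $V^{\otimes r}\otimes (V\otimes W)^{\otimes f}\otimes W^{\otimes s}$) shows $E_i$ annihilates $v_{f,\lambda}\n_{\lambda'}$; because the weight is dominant and the $K_i^{-1}$-twists are by scalars, no cross terms survive except the ones already shown to vanish.

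Finally, to push the statement from $v_{f,\lambda}\n_{\lambda'}$ to $v_{\lambda,\t,d}=v_{f,\lambda}\n_{\lambda'}g_{d(\t)}g_d$, I invoke the bimodule structure of Theorem~\ref{ration}: the right $\mathscr B_{r,s}$-action commutes with the left $\U_\kappa$-action, so applying $g_{d(\t)}g_d$ on the right preserves the property of being a highest weight vector of the given weight (the weight is unchanged as noted above, and $E_i(v\cdot b)=(E_iv)\cdot b=0$ for $b\in\mathscr B_{r,s}$). One should also check $v_{\lambda,\t,d}\neq 0$; this follows because $g_{d(\t)}g_d$ is invertible in $\mathscr B_{r,s}\otimes_{\mathcal Z}\mathbb Q(q)$ and $v_{f,\lambda}\n_{\lambda'}\neq 0$, which in turn holds since its leading term with respect to the basis $\{v_{\mathbf i|\mathbf j}\}$ — the one indexed by $\t_{\lambda'}$ — appears with coefficient a power of $q$. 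The main obstacle is the careful bookkeeping in the middle step: verifying that the interaction of the column antisymmetrizers $\n_{\lambda'}$ with the contraction vectors $v^f$ really does produce a highest weight vector and not merely a weight vector — i.e., controlling the $E_i$-action across the boundary between the $V^{\otimes r}$ block, the intertwined $(V\otimes W)^{\otimes f}$ middle, and the $W^{\otimes s}$ block, where the $q$-powers from the $K_i^{-1}$ twists in $\Delta(E_i)$ must be tracked precisely. This is exactly the kind of computation that Dipper--Doty--Stoll's explicit formulas in Proposition~\ref{bmodule} and Lemma~\ref{natural1} are designed to make manageable.
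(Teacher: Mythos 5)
Your argument is, in substance, the same as the paper's: both reduce to showing that $E_i$ kills $v_{f,\lambda}\n_{\lambda'}$ and then use the bimodule structure of Theorem~\ref{ration} to transport this through right multiplication by $g_{d(\t)}g_d$. Your cancellation for the middle block $v^f$ is exactly the paper's computation with the two operators $x_1=1^{\otimes(r-j)}\otimes E_i\otimes (K_i^{-1})^{\otimes(s+j-1)}$ and $x_2=1^{\otimes(r+j-1)}\otimes E_i\otimes (K_i^{-1})^{\otimes(s-j)}$, and your treatment of the outer blocks (entries strictly decreasing down columns, so $E_i$ produces a repeated adjacent entry which the quantum antisymmetrizer kills) is the paper's argument with $g_{w_k}\n_{\lambda'}$; you merely apply $\n_{\lambda'}$ before $E_i$ rather than after.

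There is, however, one concrete gap. You verify only that $E_i$ annihilates the vector, and you implicitly work over $\kappa$. First, the definition of a singular vector used here requires $E_i^{\ell}/[\ell]!\,v=0$ for \emph{all} $\ell$, and over $\kappa$ one cannot deduce $E_i^{(\ell)}v=0$ from $E_iv=0$ because $[\ell]!$ may vanish. Second, the step ``a basis vector with a repeated entry in adjacent column positions is killed by $\n_{\lambda'}$'' rests on an identity of the form $q^{-1}x=-qx$, i.e.\ $(q+q^{-1})x=0$, which forces $x=0$ only if $q+q^{-1}\neq 0$; when $e=2$ this fails in $\kappa$. The paper's proof opens precisely with the repair: establish $E_i(v_{f,\lambda}\n_{\lambda'})=0$ over $\mathbb{Q}(q)$, conclude $E_i^{(\ell)}(v_{f,\lambda}\n_{\lambda'})=0$ there, note that the vector and the divided powers live in the $\mathcal{Z}$-lattice, and then base-change to $\kappa$. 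You should insert this reduction (or else invoke the integral-form version of the ``standard fact'' about column antisymmetrizers, which amounts to the same thing); as written, the argument does not go through over an arbitrary field $\kappa$. The nonvanishing of $v_{\lambda,\t,d}$, which you address at the end, is not needed at this point and is handled separately by the paper in Lemma~\ref{indep}.
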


\begin{proof}  By Theorem~\ref{ration},
$V^{r, s}$ is a $(\U_{\mathcal Z}, \mathscr
B_{r,s})$-bimodule over $\mathcal Z$, where $V$ is  the free $\mathcal Z$-module with  rank $n$.  If we have $E_i (v_{f,
\lambda} \n_{\lambda'})=0$ over   $\mathbb  Q(q)$, then
$E_i^{\ell}/[\ell]! (v_{f, \lambda} \n_{\lambda'})=0$ over $\mathbb
Q(q)$, forcing $E_i^{\ell}/[\ell]! (v_{f, \lambda} \n_{\lambda'})=0$
over $\mathcal Z$. By base change, $E_i^{\ell}/[\ell]! (v_{f,
\lambda} \n_{\lambda'})=0$ over $\kappa$.

 For any positive integer $j$ with $ j\le f $, let  $$x_1 =
1^{\otimes (r-j)} \otimes E_i\otimes (K_i^{-1})^{\otimes (s+j-1)}, \text{ and   $ x_2 = 1^{\otimes (r+j-1)}\otimes  E_i \otimes
(K_i^{-1})^{\otimes (s-j)}$.}$$  If $v=v_1  \otimes v^{j}  \otimes v_2\in
V^{r,s}$ where $v_1\in V^{\otimes (r-j)}$ and  $v^{j}$ is given in Definition~\ref{vlambda}(c), by  Lemma~\ref{natural1},
$$\begin{aligned} x_1v & =q^{-1}v_1  \otimes v_i \otimes v^{j-1}\otimes v_{i+1}^* \otimes (K_i^{-1})^{\otimes (s-j)}v_2 ,\\
x_2v & =-q^{-1}v_1  \otimes v_i \otimes v^{j-1}\otimes v_{i+1}^* \otimes (K_i^{-1})^{\otimes (s-j)}v_2.\\
\end{aligned} $$ So,  $(x_1+x_2) v=0$. Note that
 $$
\Delta^{r+s-1} (E_i) =\sum_{j=0}^{r+s-1} 1^{\otimes j} \otimes E_i
\otimes (K_i^{-1})^{(r+s-j-1)}. $$
So,   $E_iv_{f, \lambda} $ can be written as a linear
combination of elements $v_{\mathbf i_\lambda^k}\otimes v^f\otimes
v_{\mathbf j_\lambda}^*$ and $v_{\mathbf i_\lambda}\otimes
v^f\otimes v_{\mathbf j_\lambda^k}^*$ where \begin{enumerate} \item  $\mathbf i_\lambda^{k}$
is obtained from $\mathbf
i_\lambda$ by using $i$ instead of
$i+1$ in the sequence $(\alpha_k, \cdots, 2, 1 )$ if  $i\le \alpha_k-1$,
\item  $\mathbf j_\lambda^{k}$  is obtained from   $\mathbf j_\lambda$  by using $i$ instead of  $i-1$
 in the sequence   $(n, n-1, \cdots, n-\beta_k+1)$ if $i\ge n-\beta_k+2$.
 \end{enumerate}
 If $\mathbf i_\lambda^{k}$ is well defined, we write    $w_k=(\sum_{j=1}^{k-1} \alpha_j+i,
\sum_{j=1}^{k-1}\alpha_j+i+1)\in \mathfrak S_{\lambda'}$. So,
$$  v_{\mathbf i_\lambda^k}\otimes v^f\otimes v^*_{\mathbf
j_\lambda}  g_{w_k} \n_{\lambda'}=q^{-1} v_{\mathbf i_\lambda^k}\otimes
v^f\otimes v^*_{\mathbf j_\lambda}\n_{\lambda'}=-q v_{\mathbf
i_\lambda^k}\otimes v^f\otimes v^*_{\mathbf j_\lambda}\n_{\lambda'},
\text{ over $\mathcal Z$.}$$  This implies $v_{\mathbf
i_\lambda^k}\otimes v^f\otimes v^*_{\mathbf
j_\lambda}\n_{\lambda'}=0$ over $\mathbb Q(q)$. Similarly,
$v_{\mathbf i_\lambda}\otimes v^f\otimes v^*_{\mathbf
j_\lambda^k}\n_{\lambda'}=0$. So, $E_i (v_{f, \lambda}
\n_{\lambda'})=0$  over   $\mathbb  Q(q)$. By Theorem~\ref{ration},  $E_i v_{\lambda, \t, d}=(E_i(v_{f, \lambda}
\n_{\lambda'})) g_{d(\t)} g_d=0$.
 Finally, it is easy to see that the weight of  $v_{\lambda, \t, d}$ is $\phi(f, \lambda)$.
\end{proof}

\begin{Lemma}\label{indep} Suppose $(f, \lambda)\in \Lambda_{r, s}$.
 Then $\{v_{\lambda, \t, d}|\t\in \Std(\lambda'), d\in \mathscr{D}_{r,s}^f\}$ is $\kappa$-linearly independent.
\end{Lemma}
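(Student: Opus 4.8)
The plan is to exploit the fact that the vectors $v_{\lambda, \t, d}$ are, up to the rational $q$-Schur action, the images of the cellular basis elements $e^f \n_{\lambda'} g_{d(\t)} g_d$ under the bimodule map, and to transfer linear independence from the quantized walled Brauer side via the isomorphism $\psi$ of Theorem~\ref{ration}. More precisely, recall from Definition~\ref{vlambda} that $v_{f,\lambda} = v_{\mathbf i_\lambda}\otimes v^f \otimes v^*_{\mathbf j_\lambda}$, and note that $v^f = v_{\mathbf i_0}\otimes v^*_{\mathbf j_0} \, e^f$ in an obvious sense, so that $v_{f,\lambda}$ lies in the image of a suitable $v_{\mathbf i | \mathbf j}$ acted on by $e^f$, and hence $v_{\lambda,\t,d} = v_{f,\lambda}\n_{\lambda'} g_{d(\t)} g_d$ is (up to a unit) the image of $v_{\mathbf i|\mathbf j}\cdot C_{(\ss,e)(\t',d)}$ for an appropriate cellular basis element indexed by $(f,\lambda)$. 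Since we are working over $\kappa$ with $n\ge r+s$, Theorem~\ref{ration}(b) tells us $\psi:\mathscr B_{r,s,\kappa}\to \End_{\U_\kappa}(V^{r,s}_\kappa)$ is an isomorphism, so $\mathscr B_{r,s,\kappa}$ acts faithfully on $V^{r,s}_\kappa$.

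First I would make the pairing between $\{v_{\lambda,\t,d}\}$ and the cellular basis precise: fix the particular element $v_{\mathbf i_\lambda}\otimes v^*_{\mathbf j_\lambda}$ of weight $\phi(f,\lambda)$ with all $f$ "diagonal'' pairs collapsed by $e^f$, and check using Proposition~\ref{bmodule} that right multiplication by $e^f \n_{\lambda'} g_{d(\t)} g_d$ sends it (up to a nonzero scalar in $\kappa$, coming from the $q$-powers in Proposition~\ref{bmodule}(a)) to $v_{\lambda,\t,d}$. Second, I would argue that distinct pairs $(\t,d)$ with $\t\in\Std(\lambda')$, $d\in\mathscr D^f_{r,s}$ yield vectors that are "separated'' either by weight or by the triangularity of the cellular basis: the elements $C_{(\s,e)(\t,d)}$ over $I(f,\lambda)$ are linearly independent in $\mathscr B_{r,s,\kappa}$ by Theorem~\ref{celb}, and by the cellular structure they remain independent modulo $\mathscr B^{\rhd(f,\lambda)}_{r,s,\kappa}$, which is exactly the statement that the cell module $C(f,\lambda)$ has basis indexed by $\Std(\lambda)\times\mathscr D^f_{r,s}$. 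Third — and this is where Proposition~\ref{hwv} enters — since each $v_{\lambda,\t,d}$ is a singular vector of highest weight $\phi(f,\lambda)$, a linear dependence among them would produce a nonzero element of $\mathscr B_{r,s,\kappa}$ annihilating $v_{\mathbf i_\lambda}\otimes v^*_{\mathbf j_\lambda}$ in a controlled way; combined with faithfulness of the action (via $\psi$ being an isomorphism) and the fact that the relevant piece of $V^{r,s}_\kappa$ realizes the cell module $C(f,\lambda)$ (cf.\ Proposition~\ref{classcell} and the dimension count in Theorem~\ref{qha}), one concludes the coefficients must all vanish.

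Concretely, the cleanest route is: suppose $\sum_{\t,d} c_{\t,d}\, v_{\lambda,\t,d}=0$ in $V^{r,s}_\kappa$. Apply a fixed element of $\mathscr B_{r,s,\kappa}$ (namely $\sigma(g_{d(\t^{\lambda'})})$ or similar, together with the identification of $v_{f,\lambda}\n_{\lambda'}\cdot(-)$ with the standard generator of $C(f,\lambda)$) to rewrite the relation as a relation among the standard basis elements $\{e^f\n_{\t^{\lambda'}\s}g_d + \mathscr B^{\rhd(f,\lambda)}_{r,s,\kappa}\}$ of the cell module, after a translation $\s \leftrightarrow \t$ via the anti-involution and the conjugation $\lambda\leftrightarrow\lambda'$; linear independence of that basis forces all $c_{\t,d}=0$. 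The main obstacle I anticipate is bookkeeping: matching up the column-semistandard rational $\lambda$-tableaux / the $\gamma_\lambda$-tableaux of Section~\ref{rational} with the honest standard $\lambda'$-tableaux indexing $\Std(\lambda')$, and tracking the nonzero scalar powers of $q$ produced by the $e_1$-action in Proposition~\ref{bmodule}(a), so that "up to a unit'' is genuinely a unit in $\kappa$. Once that dictionary is in place, the independence is immediate from Theorem~\ref{celb} and the faithfulness coming from Theorem~\ref{ration}(b) when $n\ge r+s$.
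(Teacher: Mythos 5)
There is a genuine gap at the heart of your argument: you try to transfer linear independence from the cellular basis of $\mathscr B_{r,s,\kappa}$ to the vectors $v_{\lambda,\t,d}$ using faithfulness of the action (Theorem~\ref{ration}(b)). But faithfulness only says that an element of $\mathscr B_{r,s,\kappa}$ acting as zero on \emph{all} of $V^{r,s}_\kappa$ is zero. A relation $\sum_{\t,d}c_{\t,d}\,v_{\lambda,\t,d}=0$ only says that the single element $\sum_{\t,d}c_{\t,d}\,\n_{\lambda'}g_{d(\t)}g_d$ annihilates the \emph{one} vector $v_{f,\lambda}$, and the evaluation map $b\mapsto v_{f,\lambda}\cdot b$ is very far from injective. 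What you actually need is injectivity of the map from the span of the cell-module basis elements into $V^{r,s}_\kappa$ --- but that is precisely the content of the lemma, and it is also the substance of Proposition~\ref{weyl-cell}, which is proved \emph{after} and \emph{using} this lemma; so appealing to ``the relevant piece of $V^{r,s}_\kappa$ realizes the cell module'' is circular. Your subsidiary claim that distinct pairs $(\t,d)$ are separated ``by weight'' is also false: by Proposition~\ref{hwv} all the $v_{\lambda,\t,d}$ are singular vectors of the \emph{same} weight $\phi(f,\lambda)$.

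The paper supplies the missing ingredient by a different, more concrete route. It replaces $v^f$ (a sum over $k$ of contracted pairs) by the single summand $\mathbf v=v_{\alpha_1+1}\otimes\cdots\otimes v_{\alpha_1+f}\otimes v^*_{\alpha_1+f}\otimes\cdots\otimes v^*_{\alpha_1+1}$, obtaining $\xi_\lambda$; projecting the relation onto the corresponding basis components separates the different $d\in\mathscr D^f_{r,s}$ (they place the contracted indices in different tensor positions), so one may fix $d$ and cancel the invertible $g_d$. It then identifies $\xi_\lambda\n_{\lambda'}g_{d(\t)}$, up to a unit, with $\m_\lambda g_{w_\lambda}\n_{\lambda'}g_{d(\t)}$ under the classical isomorphism $V^{\otimes r}\cong\oplus_\mu\m_\mu\mathscr H_r$ of right Hecke-modules, and concludes by Dipper--James' linear independence theorem for these elements in the Hecke algebra. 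Some such reduction to an already-known independence statement (here, a Hecke-algebra one) is indispensable, and your proposal contains no substitute for it.
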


\begin{proof}

 Let $\xi_\lambda$ be  obtained from $v_{f, \lambda}$ (see
 Definition~\ref{vlambda}) by using $\mathbf v$ instead of
$v^f$ in Definition~\ref{vlambda}(c), where $\mathbf
v=v_{\alpha_1+1}\otimes\cdots\otimes v_{\alpha_1 +f}\otimes
v_{\alpha_1+f}^*\otimes\cdots\otimes v^*_{\alpha_1+1}$ and   $\lambda'=(\alpha, \beta)$.

If  $\sum_{\t,d}a_{\t,d} v_{\lambda, \t, d}=0$,
$a_{\t,d}\in \kappa$, by Proposition~\ref{bmodule},
$\sum_{\t}a_{ \t,d }\xi_\lambda
\n_{\lambda'}g_{d(\ts)}g_d=0$  for any fixed $d$. Since
$g_d$ is invertible, $\sum_{\t} a_{ \t, d}\xi_\lambda
\n_{\lambda'}g_{d(\t)}=0$.

 It is well known that $V^{\otimes r}\cong \oplus_{\lambda\in \Lambda(n,r)}  \m_\lambda \H_r$ as right $\mathscr H_r$-modules, where $\m_\lambda$ is obtained from
 that in section~2 by using $q^{-1}$ instead of $q$. The corresponding isomorphism sends $v_{\mathbf i_\lambda d}$ to $q^{-l(d)}\m_\lambda g_d$\footnote{If $(g_i-q)(g_i+q^{-1})=0$, then the corresponding
  isomorphism sends $v_{{\mathbf {i}}_{\lambda} d}$ to $q^{l(d)}\m_\lambda g_d$.}  for any
distinguished right coset  representative $d$ of $\mathfrak
S_\lambda/\mathfrak S_r$. In particular, if we consider $V_\kappa^{r-f}\otimes \mathbf v\otimes W_\kappa^{s-f}$
 as  right $\mathscr H_{r-f}\otimes \mathscr H_{s-f}$-module, then
$\xi_\lambda\n_{\lambda'}g_{d(\ts)}$ corresponds to $\m_\lambda g_{w_\lambda} \n_{\lambda'} g_{d(\ts)}$ up to a non-zero scalar in $\kappa$, where $w_\lambda=d(\t_\lambda)$.  Note that $\mathscr H_{r-f}\otimes \mathscr H_{s-f}$
are generated by $g_{f+i}$ and $g^*_{f+j}$ for all positive integers $i, j$ with $f+i\le r-1$ and $f+j\le s-1$.
By \cite[Theorem~5.6]{DJ1}, $a_{\t,
d}=0$, for all possible $\t$ and $d$.
\end{proof}

By Proposition~\ref{hwv}, each
$\U_{\kappa}$-module generated by
 $v_{\lambda, \t, d}$ is a highest weight module with  highest weight
 $\tilde \lambda:=\phi(f, \lambda)$ in Lemma~\ref{phila}.  By the universal property of  Weyl modules in \cite[1.20]{APW},
  $\U_{\kappa} v_{\lambda, \t, d}$ is a quotient of $\Delta(\tilde \lambda)$ where   $\Delta({\tilde \lambda})$ is the Weyl  module  of $\U_{\kappa}$
  with respect to
 the highest weight $\tilde \lambda$.  We will use this fact  in Proposition~\ref{weyl-cell}.

\begin{Prop}\label{weyl-cell} If  $(f, \lambda)\in \Lambda_{r, s}$, then there is an isomorphism
$$\text{Hom}_{\U_\kappa}(\Delta(\tilde \lambda), V_\kappa^{r,s})\cong C(f, \lambda')$$ as right $\mathscr B_{r, s, \kappa}$-modules if $n\ge r+s$.
\end{Prop}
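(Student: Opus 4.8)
The plan is to identify $\Hom_{\U_\kappa}(\Delta(\tilde\lambda),V_\kappa^{r,s})$ with the space of singular vectors of weight $\tilde\lambda$ in $V_\kappa^{r,s}$, to pin down its dimension via tilting theory reduced to the semisimple situation over $\mathbb Q(q)$, and then to write down an explicit $\mathscr B_{r,s,\kappa}$-module isomorphism onto $C(f,\lambda')$.

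\smallskip

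\noindent\textbf{Reduction to singular vectors.} Since $V_\kappa^{r,s}$ is a $(\U_\kappa,\mathscr B_{r,s,\kappa})$-bimodule by Theorem~\ref{ration}, $\Hom_{\U_\kappa}(\Delta(\tilde\lambda),V_\kappa^{r,s})$ is a right $\mathscr B_{r,s,\kappa}$-module by postcomposing with the right action on $V_\kappa^{r,s}$. Evaluation at the canonical highest weight generator of $\Delta(\tilde\lambda)$ is right $\mathscr B_{r,s,\kappa}$-linear with image the set $(V_\kappa^{r,s})^{+}_{\tilde\lambda}$ of singular vectors of weight $\tilde\lambda$, which is a right $\mathscr B_{r,s,\kappa}$-submodule because $\mathscr B_{r,s,\kappa}$ commutes with $\U_\kappa$ and hence preserves weights and the relations $E_i^{(\ell)}(-)=0$. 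This evaluation map is injective (the generator generates $\Delta(\tilde\lambda)$) and surjective, since any singular vector of weight $\tilde\lambda$ spans the highest weight space of the highest weight module it generates, which is a quotient of $\Delta(\tilde\lambda)$ by the universal property \cite[1.20]{APW}. Thus $\Hom_{\U_\kappa}(\Delta(\tilde\lambda),V_\kappa^{r,s})\cong(V_\kappa^{r,s})^{+}_{\tilde\lambda}$ as right $\mathscr B_{r,s,\kappa}$-modules. Moreover, by Proposition~\ref{hwv} the $v_{\lambda,\t,d}$ ($\t\in\Std(\lambda')$, $d\in\mathscr D^f_{r,s}$) are singular of weight $\phi(f,\lambda)=\tilde\lambda$, and by Lemma~\ref{indep} they are linearly independent, so
\[
\dim_\kappa(V_\kappa^{r,s})^{+}_{\tilde\lambda}\ \ge\ |\Std(\lambda')|\,|\mathscr D^f_{r,s}|\ =\ \dim_\kappa C(f,\lambda'),
\]
and the same inequality holds over $\mathbb Q(q)$.

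\smallskip

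\noindent\textbf{The dimension.} Since all weights of $V_\kappa^{r,s}$ lie in the saturated set $\Lambda(r,s)$ of Lemma~\ref{bij}, the $\U_\kappa$-action factors through the quasi-hereditary algebra $S_\kappa(n;r,s)$ of Theorem~\ref{qha}; being a tensor product of the (minuscule, hence tilting) modules $V$ and $W=V^{*}$, and tensor products of tilting $\U_\kappa$-modules being tilting, $V_\kappa^{r,s}$ is a tilting module for $S_\kappa(n;r,s)$. Hence $\dim_\kappa\Hom_{\U_\kappa}(\Delta(\tilde\lambda),V_\kappa^{r,s})$ equals the multiplicity of $\nabla(\tilde\lambda)$ in a good filtration of $V_\kappa^{r,s}$, which is read off the character $\big(\sum_i x_i\big)^{r}\big(\sum_i x_i^{-1}\big)^{s}$ and so does not depend on $\kappa$; I would compute it over $\mathbb Q(q)$. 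There $\mathscr B_{r,s}$ is split semisimple, because $\rho=q^{n}$ and $n\ge r+s$ force the numerical conditions of Theorem~\ref{semimain}; then $S(n;r,s)=\End_{\mathscr B_{r,s}}(V^{r,s})$ is semisimple and, by Theorem~\ref{ration}, $\mathscr B_{r,s}=\End_{S(n;r,s)}(V^{r,s})$. The double centralizer theorem gives $V^{r,s}\cong\bigoplus_{(g,\mu)\in\Lambda_{r,s}}L_{\U}(\nu_{g,\mu})\boxtimes C(g,\mu)$ over $\mathbb Q(q)$ with the dominant weights $\nu_{g,\mu}\in\Lambda^{+}(r,s)$ pairwise distinct; as $|\Lambda_{r,s}|=|\Lambda^{+}(r,s)|$ (Lemma~\ref{phila}), $(g,\mu)\mapsto\nu_{g,\mu}$ is a bijection, so the multiset of multiplicities $\{m_\nu\mid \nu\in\Lambda^{+}(r,s)\}$ of the $L_\U(\nu)$ in $V^{r,s}$ coincides with $\{\dim C(g,\mu)\mid(g,\mu)\in\Lambda_{r,s}\}$. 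Since $\phi$ and conjugation $(f,\lambda)\mapsto(f,\lambda')$ are bijections of the respective index sets, the sequences $\big(m_{\phi(f,\lambda)}\big)$ and $\big(\dim C(f,\lambda')\big)$, indexed by $\Lambda_{r,s}$, have the same multiset of values; with the pointwise inequality from the previous step this forces $m_{\phi(f,\lambda)}=\dim C(f,\lambda')$. Hence $\dim_\kappa(V_\kappa^{r,s})^{+}_{\tilde\lambda}=\dim_\kappa C(f,\lambda')$, and the $v_{\lambda,\t,d}$ form a $\kappa$-basis of $(V_\kappa^{r,s})^{+}_{\tilde\lambda}$.

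\smallskip

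\noindent\textbf{The isomorphism, and the main obstacle.} It then remains to exhibit a right $\mathscr B_{r,s,\kappa}$-module homomorphism from $C(f,\lambda')$ onto $(V_\kappa^{r,s})^{+}_{\tilde\lambda}$. Using the description $C(f,\lambda')\cong e^{f}\m_{\lambda}g_{d(\t_{\lambda})}\n_{\lambda'}\mathscr B_{r,s,\kappa}\ (\mathrm{mod}\ \mathscr B^{f+1}_{r,s,\kappa})$ of Proposition~\ref{classcell}, one checks—exactly as in the proof of Lemma~\ref{indep}, via the isomorphism $V^{\otimes r}\cong\bigoplus_{\mu}\m_{\mu}\mathscr H_r$ together with a short computation with the action of Proposition~\ref{bmodule} (in particular $v^{f}$ equals $\mathbf v\,e^{f}$ up to a unit, whence $v_{f,\lambda}=(\text{unit})\,\xi_\lambda e^f$)—that sending the cyclic generator of $C(f,\lambda')$ to $v_{f,\lambda}\n_{\lambda'}$ is well defined; here one needs that $v_{f,\lambda}\n_{\lambda'}$ is annihilated by $\mathscr B^{f+1}_{r,s,\kappa}$, which holds because every vector in $v_{f,\lambda}\n_{\lambda'}\mathscr B_{r,s,\kappa}$ involves at most $f$ contracted pairs—the remaining $V$- and $W$-indices of $v_{f,\lambda}\n_{\lambda'}$ lying in the disjoint ranges $\{1,\dots,\le r-f\}$ and $\{\ge n-s+f+1,\dots,n\}$ as $n\ge r+s$—whereas the generator $e^{f+1}$ would require $f+1$ matching pairs. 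The image of this homomorphism is the span of the $v_{\lambda,\t,d}$, that is, all of $(V_\kappa^{r,s})^{+}_{\tilde\lambda}$, and by the dimension equality it is an isomorphism; composing with the identification of the first step proves the proposition. The main obstacle is precisely the dimension equality: it rests on $V_\kappa^{r,s}$ being a tilting module for $S_\kappa(n;r,s)$ and on field-independence of good-filtration multiplicities, which together reduce the count to the semisimple case over $\mathbb Q(q)$. A secondary point is the bookkeeping in the last step—checking that the assignment respects the cellular presentation of $C(f,\lambda')$ and the annihilation of $v_{f,\lambda}\n_{\lambda'}$ by $\mathscr B^{f+1}_{r,s,\kappa}$.
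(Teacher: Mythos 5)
Your proposal is correct and follows the same overall strategy as the paper's proof: produce the singular vectors $v_{\lambda,\t,d}$ via Proposition~\ref{hwv} and Lemma~\ref{indep}, use the tilting property of $V_\kappa^{r,s}$ to reduce the dimension of $\text{Hom}_{\U_\kappa}(\Delta(\tilde\lambda),V_\kappa^{r,s})$ to the semisimple case over $\mathbb Q(q)$, and then match the resulting basis with the cyclic description of $\tilde C(f,\lambda')$ from Proposition~\ref{classcell}. Where you genuinely diverge is in the two justifications inside the dimension count. For field-independence the paper works with the $\mathcal Z$-lattice $\Delta(\tilde\lambda)^\diamond$ and its projectivity (Corollary~C.19 of \cite{DDPW}, constant local rank), whereas you invoke the character-determinacy of good-filtration multiplicities; both are standard and equivalent in effect. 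More substantially, for the multiplicity of $L(\tilde\lambda)$ in $V^{r,s}_{\mathbb Q(q)}$ the paper simply cites \cite[Lemma~6.5]{KM}, while you derive it internally: the double centralizer decomposition coming from Theorem~\ref{ration} and semisimplicity gives equality of the two multisets of multiplicities, and the pointwise lower bound from Lemma~\ref{indep} then forces equality term by term. This is a nice self-contained replacement for the external citation and is the one real added value of your write-up. One caution on the final step: your justification that $v_{f,\lambda}\n_{\lambda'}$ is killed by $\mathscr B^{f+1}_{r,s,\kappa}$ (``only $f$ contracted pairs are available'') is heuristic as stated, since acting by $e_1$ preserves the weight and hence the number of matched index pairs; the clean argument is that every basis vector of weight $\tilde\lambda=\phi(f,\lambda)$ lies, over $\mathbb Q(q)$, in the sum of isotypic components $L(\phi(g,\mu))\boxtimes C(g,\mu)$ with $g\le f$, all of which are annihilated by $\mathscr B^{f+1}$ by cellularity, and this passes to $\kappa$ by base change. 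The paper's own treatment of this last step (``one can check easily\dots'') is comparably terse, so this is a point to tighten rather than a gap in your approach.
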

\begin{proof}

For each $(d, \t)\in \mathscr D_{r,s}^f \times \Std(\lambda')$, let
$M_{d, \t}= \U_\kappa v_{\lambda, \t, d}\subset V^{r, s}$. By Proposition~\ref{hwv},
$v_{\lambda, \t, d}\in V^{r, s}$ is a highest weight vector with highest weight
$\tilde \lambda$. So, there is a unique $\U_\kappa$-epimorphism (up to a scalar)  from
$\Delta({\tilde \lambda})$ to $M_{d, \t}$ sending highest weight vector to highest
weight vector. Such a $\U_\kappa$-homomorphism will be denoted by  $f_{\lambda, \t, d}$.
Since  $M_{d, \t}$ is a submodule of $V^{r, s}$,  $f_{\lambda, \t, d}$ results in a homomorphism
in  $\text{Hom}_{\U_\kappa}(\Delta(\tilde \lambda), V_\kappa^{r,s})$. By abuse of notation, we denote this homomorphism by $f_{\lambda, \t, d}$.
By  Lemma~\ref{indep},  $\{f_{\lambda, \t, d}\mid  (d, \t)\in
\mathscr D_{r,s}^f \times \Std(\lambda')\}$ is $\kappa$-linear
independent.

Now, we compute the dimension of   $\text{Hom}_{\U_\kappa}(\Delta(\tilde \lambda), V_\kappa^{r,s})$.
It is known that  $V=\Delta(\epsilon_1)=\nabla(\epsilon_1)$  and
$V^*=\Delta(-\epsilon_n)=\nabla(-\epsilon_n)$, where $\nabla(\epsilon_1)$ is the co-Weyl module with highest weight $\epsilon_1$. So, both $V$ and $V^*$ are
   tilting $\U_{\kappa}$-module. It is known that the tensor
product of tilting module is again a tilting module~\cite{Dk}. So, $V_\kappa^{r, s}$
is a tilting module for $\U_{\kappa}$.

Over $\mathcal Z$, we have the similar functor $\diamond=\Hom_{\U_\mathcal Z}(-, V^{r, s})$. In
this case,
$$\Delta(\tilde\lambda)^\diamond=\Hom_{\U_\mathcal Z}(\Delta(\tilde\lambda), V^{r, s})_{\mathbb Q(q)}\cap
\Hom_{\mathcal Z}(\Delta(\tilde\lambda), V^{r, s}).$$ By
Corollary~C.19 in \cite{DDPW}, $\Delta(\tilde \lambda)^\diamond$ is
$\mathcal Z$-projective. So, the localization of
$\Delta(\tilde\lambda)^\diamond$ has constant rank for any prime
ideal $\mathfrak p$ of $\mathcal Z$ (See \S7,7 in \cite{Jac}).
Therefore, the dimension of $\Delta(\tilde\lambda)^\diamond$ over
any $\kappa$ is equal to that over $\mathbb Q(q)$.
 Note that  $V^{r,s}$ is  complete reducible
as left $\U_{\mathbb Q(q)}$-module and the multiplicity of $\Delta({\tilde\lambda})$ in $V_{\mathbb Q(q)}^{r,s}$ is
$\dim C(f, \lambda')$~\cite[Lemma~6.5]{KM}, which is  the cardinality of
$\{f_{\lambda, \t, d}\mid (d, \t)\in \mathscr D_{r,s}^f \times
\Std(\lambda')\}$. Therefore, $\{f_{\lambda, \t, d}\mid (d, \t)\in \mathscr D_{r,s}^f \times
\Std(\lambda')\}$  is  a $\kappa$-basis of $\text{Hom}_{\U_\kappa}(\Delta(\tilde \lambda), V_\kappa^{r,s})$.

By definition, $\text{Hom}_{\U_\kappa}(\Delta(\tilde \lambda), V_\kappa^{r,s})$ is the  right $\mathscr B_{r, s, \kappa}$-module
such that $$f.h(x)=f(x)\cdot h, \text{$\forall h\in \mathscr B_{r, s, \kappa}$, $f\in \text{Hom}_{\U_\kappa}(\Delta(\tilde \lambda), V_\kappa^{r,s})$
 and $x\in \Delta ({\tilde\lambda})$.}$$
 Finally,  one can check easily that the linear isomorphism
which sends $f_{\lambda, \t, d}$ to   $e^f\m_{\lambda}g_{w_{\lambda}} \n_{\lambda'} g_{d(\t)} g_{d}  +
\mathscr B_{r, s, \kappa}^{f}$ is a right $\mathscr B_{r,
s, \kappa}$-homomorphism. Now, the result follows from  Lemma~\ref{classcell}.
\end{proof}

The following result gives a classification of singular vectors in $V_\kappa^{r, s}$ with highest weight $\tilde\lambda=\phi(f, \lambda)$ for all $(f, \lambda)\in \Lambda_{r, s}$.
\begin{Theorem}\label{sing} For any  $(f, \lambda)\in \Lambda_{r, s}$ with $0\le f\le \min\{r, s\}$, let $S(\lambda)=\{v_{\lambda, \t, d}|\t\in \Std(\lambda'), d\in \mathscr{D}_{r,s}^f\}$. Then $S(\lambda)$ is a basis of the $\kappa$-space
spanned by all singular vectors of $V_\kappa^{r, s}$ with highest weight $\tilde\lambda$.
\end{Theorem}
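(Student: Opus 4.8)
The plan is to show the two inclusions that together identify $S(\lambda)$ as the promised basis. First, Proposition~\ref{hwv} already tells us that each $v_{\lambda,\t,d}\in S(\lambda)$ is a singular vector of $V_\kappa^{r,s}$ with highest weight $\tilde\lambda=\phi(f,\lambda)$, and Lemma~\ref{indep} tells us that $S(\lambda)$ is $\kappa$-linearly independent. So the whole content is a dimension count: I must prove that the $\kappa$-space of singular vectors of $V_\kappa^{r,s}$ of weight $\tilde\lambda$ has dimension exactly $|S(\lambda)|=|\Std(\lambda')|\cdot|\mathscr D_{r,s}^f|=\dim C(f,\lambda')$.

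The key tool is Proposition~\ref{weyl-cell}, which gives $\operatorname{Hom}_{\U_\kappa}(\Delta(\tilde\lambda),V_\kappa^{r,s})\cong C(f,\lambda')$, hence $\dim\operatorname{Hom}_{\U_\kappa}(\Delta(\tilde\lambda),V_\kappa^{r,s})=\dim C(f,\lambda')=|S(\lambda)|$. So it suffices to produce a $\kappa$-linear isomorphism between the space of highest-weight-$\tilde\lambda$ vectors of $V_\kappa^{r,s}$ and $\operatorname{Hom}_{\U_\kappa}(\Delta(\tilde\lambda),V_\kappa^{r,s})$. This is exactly the universal property of the Weyl module recorded after Lemma~\ref{indep}: a $\U_\kappa$-homomorphism $\Delta(\tilde\lambda)\to V_\kappa^{r,s}$ is determined by the image of the highest weight vector, which must be a singular vector of weight $\tilde\lambda$ (possibly zero), and conversely by \cite[1.20]{APW} any singular vector of weight $\tilde\lambda$ is the image of the highest weight vector under a unique such homomorphism. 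Thus evaluation at the highest weight vector gives a $\kappa$-linear bijection from $\operatorname{Hom}_{\U_\kappa}(\Delta(\tilde\lambda),V_\kappa^{r,s})$ onto $\{v\in V_\kappa^{r,s}\mid v \text{ singular of weight } \tilde\lambda\}$.

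Putting these together: the span of $S(\lambda)$ is contained in the space of singular vectors of weight $\tilde\lambda$, both have dimension $\dim C(f,\lambda')$ (the first by Lemma~\ref{indep}, the second by Proposition~\ref{weyl-cell} together with the evaluation isomorphism), and therefore $S(\lambda)$ is a basis of that space. I would spell this out as: by Proposition~\ref{hwv} the span of $S(\lambda)$ lies in the singular-vector space; by Lemma~\ref{indep} it has dimension $|S(\lambda)|=\dim C(f,\lambda')$; by the universal property of $\Delta(\tilde\lambda)$ the singular-vector space is isomorphic to $\operatorname{Hom}_{\U_\kappa}(\Delta(\tilde\lambda),V_\kappa^{r,s})$, which by Proposition~\ref{weyl-cell} has dimension $\dim C(f,\lambda')$; hence the inclusion is an equality.

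The main obstacle, I expect, is making the evaluation map argument clean over the field $\kappa$ rather than just over $\mathbb Q(q)$ — in particular checking that every singular vector of weight $\tilde\lambda$ really does generate a quotient of $\Delta(\tilde\lambda)$ (not merely a subquotient or a module of some other highest weight), so that $\operatorname{Hom}$ correctly counts all singular vectors and not just those generating Weyl-module quotients. This is precisely what the universal property \cite[1.20]{APW} delivers, since $\tilde\lambda$ is dominant and any singular vector of weight $\tilde\lambda$ generates a highest weight module of highest weight $\tilde\lambda$, hence a quotient of $\Delta(\tilde\lambda)$. Once this is granted, the rest is bookkeeping with the dimension equalities already established.
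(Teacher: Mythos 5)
Your proposal is correct and follows essentially the same route as the paper: the paper also combines Proposition~\ref{hwv}, Lemma~\ref{indep}, the universal property of $\Delta(\tilde\lambda)$ from \cite[1.20]{APW}, and the fact (from the proof of Proposition~\ref{weyl-cell}) that the $f_{\lambda,\t,d}$ form a basis of $\operatorname{Hom}_{\U_\kappa}(\Delta(\tilde\lambda),V_\kappa^{r,s})$, writing the homomorphism attached to an arbitrary singular vector as a linear combination of the $f_{\lambda,\t,d}$ and evaluating at the highest weight vector. Your reformulation as a dimension count via the evaluation isomorphism is just a repackaging of the same argument.
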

\begin{proof} Note that the weight of  each singular vector $v\in V^{r, s}$ is of form $\tilde \lambda$, for some $(f, \lambda)\in \Lambda_{r, s}$, let $M_v$ be the left $\U_{\kappa}$-module
generated by $v$. Then there is a homomorphism $\phi_v$ from Weyl module $\Delta(\tilde \lambda)$ to $M$ sending highest weight vector to highest weight vector.
By Proposition~\ref{weyl-cell}, $\phi_v$ can be written as a $\kappa$-linear combination of $f_{\lambda, \t, d}$'s. Applying these homomorphisms to  $v_\lambda$,
we see that $v$ can be written as a $\kappa$-linear combination of $v_{\lambda, \t, d}$'s, proving the result.
\end{proof}

\section{Decomposition numbers of $\mathscr B_{r,s, \kappa}$ with $\rho^2\in q^{2\mathbb Z}$ }
In this section, we establish explicit
relationship between  decomposition numbers of $\mathscr B_{r, s, \kappa}$ and those for (rational) $q$-Schur algebras.
First, we discuss the case when $e<\infty$. Therefore, $\rho=q^{n+2ke}$ for any $k\in \mathbb Z$.
In this case, we can always assume that $n$ is big enough.
We remark that we use Dipper, Doty and Stoll's presentation for $\mathscr B_{r, s}$ in \cite{DDSII}.
So, we have to use $q^{-1}, \rho^{-1}$ instead of $q, \rho$ respectively, if we use our results in section~2.

\begin{Defn} Suppose  $r,s,n\in \!\!\mathbb Z^{>0}$. Let $\diamond=\text{Hom}_{S_\kappa(n;r,s)} (-, V_\kappa^{r, s})$ and let  $\clubsuit=\text{Hom}_{\mathscr B_{r,s, \kappa}} (-, V_\kappa^{r, s})$,
 where $V_\kappa$ is the $\kappa$-vector space with $\dim_\kappa V_\kappa=n$. \end{Defn}

 For each left $S_\kappa(n;r,s)$-module $M$, let $M^\diamond= \text{Hom}_{S_\kappa(n;r,s)} (M, V_\kappa^{r, s})$.
 Then $M^\diamond$ is a right $\mathscr B_{r, s, \kappa}$-module.
  Similarly, for each right $\mathscr B_{r, s, \kappa}$-module $N$, let
 $N^\clubsuit=\text{Hom}_{\mathscr B_{r,s, \kappa}} (N, V_\kappa^{r, s})$. Then $N^\clubsuit$ is a left $ S_\kappa(n;r,s)$-module.

The following result, which follows from  Proposition~\ref{weyl-cell}, is the key part of our method for determining  the decomposition numbers of
$\mathscr B_{r, s}$ when $\rho^2\in q^{2\mathbb Z}$ and $q$ is a root of unity. Via it, we can set up explicit relationship between indecomposable direct summands of
$V^{rs}$ and principal indecomposable $\mathscr B_{r, s}$-modules.

\begin{Prop}\label{cell-weyl} Suppose $n\ge r+s$. If $\tilde \lambda$ is a highest weight of $V_\kappa^{r, s}$, then $\Delta(\tilde \lambda)^\diamond\cong C(f, \lambda')$ as right $\mathscr B_{r, s, \kappa}$-modules.\end{Prop}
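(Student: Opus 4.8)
The plan is to derive this from Proposition~\ref{weyl-cell} by upgrading $\Hom_{\U_\kappa}$ to $\Hom_{S_\kappa(n;r,s)}$, using as a bridge the algebra epimorphism $\varphi\colon \U_\kappa\twoheadrightarrow S_\kappa(n;r,s)=\End_{\mathscr B_{r,s,\kappa}}(V_\kappa^{r,s})$ of Theorem~\ref{ration}(a). By construction $u\cdot m=\varphi(u)(m)$ for $u\in\U_\kappa$ and $m\in V_\kappa^{r,s}$, so the $\U_\kappa$-action on $V_\kappa^{r,s}$ factors through $\varphi$, coincides with the tautological action of $S_\kappa(n;r,s)$, and the kernel of $\varphi$ is exactly the annihilator of $V_\kappa^{r,s}$ in $\U_\kappa$. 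Hence, whenever $M,N$ are $\U_\kappa$-modules on which $\ker\varphi$ acts trivially, each is canonically an $S_\kappa(n;r,s)$-module, and since $\varphi$ is surjective a $\kappa$-linear map $M\to N$ is $\U_\kappa$-equivariant if and only if it is $S_\kappa(n;r,s)$-equivariant; thus $\Hom_{\U_\kappa}(M,N)=\Hom_{S_\kappa(n;r,s)}(M,N)$. Taking $N=V_\kappa^{r,s}$, the proposition reduces to showing that $\ker\varphi$ annihilates the Weyl module $\Delta(\tilde\lambda)$ --- which in particular is what makes $\Delta(\tilde\lambda)^\diamond$ meaningful in the first place.

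To see this, recall from the proof of Proposition~\ref{weyl-cell} that, since $n\ge r+s$, the module $V_\kappa^{r,s}$ is a tilting module for $\U_\kappa$; in particular it has both a Weyl ($\Delta$-) filtration and a co-Weyl ($\nabla$-) filtration, with $(V_\kappa^{r,s}:\Delta(\mu))=(V_\kappa^{r,s}:\nabla(\mu))=\dim\Hom_{\U_\kappa}(\Delta(\mu),V_\kappa^{r,s})$ for every dominant weight $\mu$, as is standard for tilting modules. By Proposition~\ref{weyl-cell}, $\dim\Hom_{\U_\kappa}(\Delta(\tilde\lambda),V_\kappa^{r,s})=\dim C(f,\lambda')\ge 1$, since cell modules of the cellular algebra $\mathscr B_{r,s,\kappa}$ are non-zero. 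Hence $\Delta(\tilde\lambda)$ occurs as a section of a Weyl filtration of $V_\kappa^{r,s}$, so it is a subquotient of $V_\kappa^{r,s}$; as any element annihilating $V_\kappa^{r,s}$ annihilates all its subquotients, $\ker\varphi$, being the annihilator of $V_\kappa^{r,s}$ in $\U_\kappa$, annihilates $\Delta(\tilde\lambda)$, and therefore $\Delta(\tilde\lambda)$ is naturally an $S_\kappa(n;r,s)$-module inflating to the $\U_\kappa$-Weyl module. (Alternatively one can argue via quasi-heredity of $S_\kappa(n;r,s)$ from Theorem~\ref{qha}: since $\tilde\lambda=\phi(f,\lambda)$ and, by Lemma~\ref{phila}(b), the associated shifted poset $\Lambda$ is an ideal of $\Lambda^+(n,r+(n-1)s)$, every composition factor of $\Delta(\tilde\lambda)$ has highest weight in $\phi(\Lambda_{r,s})$, so $\Delta(\tilde\lambda)$ lies in the truncated module category of $S_\kappa(n;r,s)$.)

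With this in place, $\Delta(\tilde\lambda)$ and $V_\kappa^{r,s}$ are both $S_\kappa(n;r,s)$-modules with $\U_\kappa$ acting through $\varphi$, so by the first paragraph $\Delta(\tilde\lambda)^\diamond=\Hom_{S_\kappa(n;r,s)}(\Delta(\tilde\lambda),V_\kappa^{r,s})=\Hom_{\U_\kappa}(\Delta(\tilde\lambda),V_\kappa^{r,s})\cong C(f,\lambda')$ by Proposition~\ref{weyl-cell}. It remains to observe that this isomorphism respects the right $\mathscr B_{r,s,\kappa}$-module structures: on both $\Hom$-spaces the action of $h\in\mathscr B_{r,s,\kappa}$ sends $\psi$ to the map $x\mapsto\psi(x)\cdot h$, using the right $\mathscr B_{r,s,\kappa}$-action on $V_\kappa^{r,s}$ of Proposition~\ref{bmodule}, so the underlying $\kappa$-linear identification is $\mathscr B_{r,s,\kappa}$-linear. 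I expect the only point genuinely requiring care to be the middle step --- confirming that the $\U_\kappa$-Weyl module $\Delta(\tilde\lambda)$ really descends to the rational $q$-Schur algebra $S_\kappa(n;r,s)$; once that and Proposition~\ref{weyl-cell} are granted, the statement is formal.
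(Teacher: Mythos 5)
Your argument is correct, and it reaches the same reduction as the paper: everything hinges on showing that the $\U_\kappa$-action on $\Delta(\tilde\lambda)$ factors through $\varphi\colon\U_\kappa\twoheadrightarrow S_\kappa(n;r,s)$, after which $\Hom_{\U_\kappa}$ and $\Hom_{S_\kappa(n;r,s)}$ into $V_\kappa^{r,s}$ coincide and Proposition~\ref{weyl-cell} finishes the proof. But you establish that key point by a genuinely different mechanism. The paper realizes $\Delta(\tilde\lambda)$ concretely inside $V_\kappa^{r,s}$: by Theorem~\ref{sing} the generic irreducible summand with highest weight $\tilde\lambda$ is generated by a singular vector $v_{\lambda,\t,d}$, the $\mathcal Z$-form $\U_{\mathcal Z}v_{\lambda,\t,d}$ is a Weyl lattice, and base change identifies the submodule $\U_\kappa v_{\lambda,\t,d}\subseteq V_\kappa^{r,s}$ with $\Delta(\tilde\lambda)$, which then inherits the $S_\kappa(n;r,s)$-structure by restriction. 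You instead observe that $\ker\varphi$ is the annihilator of $V_\kappa^{r,s}$ and show $\Delta(\tilde\lambda)$ is a subquotient of $V_\kappa^{r,s}$, using that $V_\kappa^{r,s}$ is tilting (hence self-dual under the contravariant duality, so $\Delta$- and $\nabla$-multiplicities agree) together with $\dim\Hom_{\U_\kappa}(\Delta(\tilde\lambda),V_\kappa^{r,s})=\dim C(f,\lambda')\ge 1$ from Proposition~\ref{weyl-cell}. Your route is arguably more robust, since it does not require the base-changed lattice map $\U_{\mathcal Z}v_{\lambda,\t,d}\otimes\kappa\to V_\kappa^{r,s}$ to remain injective --- a point the paper passes over silently --- and your parenthetical alternative via the saturated ideal $\Lambda$ of Lemma~\ref{phila}(b) and quasi-heredity (Theorem~\ref{qha}) is exactly Donkin's generalized-Schur-algebra picture. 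What the paper's route buys is an explicit copy of $\Delta(\tilde\lambda)$ inside $V_\kappa^{r,s}$, which is reused later (for instance in the proof of Theorem~\ref{block2}); the final check of right $\mathscr B_{r,s,\kappa}$-equivariance is the formality you describe in either approach.
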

\begin{proof} We consider $V^{r, s}$ over $\mathbb Q(q)$ with generic $q$. It is  complete reducible as left $\U_{\mathbb Q(q)}$-module. Therefore, it can be decomposed into direct
summand of irreducible $\U_{\mathbb Q(q)}$-modules, say $L^\nu$'s  with highest weight $\nu$'s. By Lemma~\ref{phila},  each $\nu$ is of form $\tilde{\lambda}$ for some $(f,\lambda)\in \Lambda_{r, s}$. Further, by Theorem~\ref{sing}, $L^\lambda$ can be generated by certain $v_{\lambda,\t, d}$ for some $\t\in \Std(\lambda')$ and $d\in D^f_{r,s}$. Note that $\U_{\mathbb Q(q)} v_{\lambda,\t, d}$ has $\mathcal  Z$-form
  $\U_{\mathcal Z} v_{\lambda,\t, d}$. Therefore, $\U_{\kappa} v_{\lambda,\t, d}$, which is a left $\U_\kappa$-submodule of $V_\kappa^{r, s}$,
   can be identified with the Weyl module $\Delta(\tilde \lambda)$ of $\U_\kappa$.
So, $\Delta(\tilde\lambda)$ can be considered as a left $S_\kappa(n; r, s)$-module.
Now, the result follows from Proposition~\ref{weyl-cell}.\end{proof}

\begin{Lemma}\label{FGiso} For $r,s,n\in \mathbb Z^{>0}$, let $\mathscr B_{r,s, \kappa}$ and
$S_\kappa(n; r, s)$  be  defined over $\kappa$ with $\rho=q^n$.\begin{enumerate} \item
$\mathscr B_{r, s, \kappa}^\clubsuit\cong V_\kappa^{r, s}$ as left $S_\kappa(n;r,s)$-modules. \item If $n\ge r+s$, then
$(V_\kappa^{r,s})^\diamond\cong \mathscr B_{r, s, \kappa}$ as  right $\mathscr B_{r, s, \kappa}$-modules.\end{enumerate}
\end{Lemma}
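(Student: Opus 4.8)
\textbf{Proof strategy for Lemma~\ref{FGiso}.}
The plan is to exploit the double centralizer theorem (Theorem~\ref{ration}) together with the fact, established in Theorem~\ref{qha} and its proof, that $S_\kappa(n;r,s)$ is a quasi-hereditary (indeed cellular) algebra, and that when $n\ge r+s$ the map $\psi$ of Theorem~\ref{ration} is an isomorphism $\mathscr B_{r,s,\kappa}\xrightarrow{\ \sim\ }\operatorname{End}_{\U_\kappa}(V_\kappa^{r,s})=\operatorname{End}_{S_\kappa(n;r,s)}(V_\kappa^{r,s})$, the last equality because $\U_\kappa$ acts through $S_\kappa(n;r,s)$ by Theorem~\ref{ration}(a). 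For part (a), I would observe that $\mathscr B_{r,s,\kappa}^\clubsuit=\operatorname{Hom}_{\mathscr B_{r,s,\kappa}}(\mathscr B_{r,s,\kappa},V_\kappa^{r,s})$ is canonically isomorphic as a $\kappa$-space to $V_\kappa^{r,s}$ via $g\mapsto g(1)$, and this isomorphism is visibly compatible with the natural left $\operatorname{End}_{\mathscr B_{r,s,\kappa}}(V_\kappa^{r,s})$-action on the Hom-space; since $\U_\kappa$ (equivalently $S_\kappa(n;r,s)$) surjects onto $\operatorname{End}_{\mathscr B_{r,s,\kappa}}(V_\kappa^{r,s})$ by Theorem~\ref{ration}(a), the identification is one of left $S_\kappa(n;r,s)$-modules. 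Note that part (a) does not require $n\ge r+s$.

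For part (b), the natural candidate map is the evaluation-at-$1$ pairing again: send $\varphi\in(V_\kappa^{r,s})^\diamond=\operatorname{Hom}_{S_\kappa(n;r,s)}(V_\kappa^{r,s},V_\kappa^{r,s})=\operatorname{End}_{\U_\kappa}(V_\kappa^{r,s})$ to itself, and use $\psi^{-1}$ from Theorem~\ref{ration}(b), which is an isomorphism precisely when $n\ge r+s$. Concretely, $(V_\kappa^{r,s})^\diamond=\operatorname{End}_{S_\kappa(n;r,s)}(V_\kappa^{r,s})$ carries a right $\mathscr B_{r,s,\kappa}$-module structure via $(\varphi.b)(x)=\varphi(x)\cdot b$; I must check that $\psi$ intertwines the right regular action of $\mathscr B_{r,s,\kappa}$ on itself with this action, which amounts to the computation $\psi(b_1b_2)=\psi(b_1)\circ(\text{right mult.\ by }b_2)$ unwound through the definitions, and this is immediate since $\psi(b)$ is by construction right multiplication by $b$ on $V_\kappa^{r,s}$.

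I do not expect a serious obstacle here: the content is entirely in Theorem~\ref{ration} (double centralizer and the equivalence $n\ge r+s$), which we are entitled to assume. The only genuine step is to verify that the canonical $\kappa$-linear identifications of the two Hom-spaces with $V_\kappa^{r,s}$ and $\operatorname{End}(V_\kappa^{r,s})$ respectively are equivariant for the relevant module structures, and that the surjections of Theorem~\ref{ration} are in fact bijections in the ranges claimed: part (a) uses surjectivity of $\varphi$ (valid for all $n$) so that $S_\kappa(n;r,s)$ acts through $\operatorname{End}_{\mathscr B_{r,s,\kappa}}(V_\kappa^{r,s})$, while part (b) uses that $\psi$ is an isomorphism, which is exactly the hypothesis $n\ge r+s$. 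If one wants to be careful about the direction of the module actions, the safest route is to spell out the $\mathscr B_{r,s,\kappa}$-action and the $S_\kappa(n;r,s)$-action on each Hom-space explicitly and match them term by term; this is routine and I would relegate it to a single displayed line in each case. An alternative, essentially equivalent, phrasing of (b) would deduce it formally from (a) by applying $\diamond$ to the isomorphism of (a) and using that $V_\kappa^{r,s}$ is a faithful $S_\kappa(n;r,s)$-module when $n\ge r+s$ (again by Theorem~\ref{ration}(a), whose kernel is then trivial), so that $(\mathscr B_{r,s,\kappa}^\clubsuit)^\diamond=(V_\kappa^{r,s})^\diamond$ on the one hand and, on the other, $({}-{}^\clubsuit)^\diamond$ applied to the free module $\mathscr B_{r,s,\kappa}$ returns $\mathscr B_{r,s,\kappa}$ because $\operatorname{Hom}_{S_\kappa}(\operatorname{Hom}_{\mathscr B}(\mathscr B,V),V)\cong\operatorname{End}_{S_\kappa}(V)\cong\mathscr B$ via $\psi$.
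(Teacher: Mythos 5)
Your proposal is correct and follows exactly the paper's (very terse) argument: the paper simply states that (a) is trivial and (b) follows from Theorem~\ref{ration}(b), and your write-up supplies precisely the routine identifications (evaluation at $1$ for (a), and the isomorphism $\psi$ onto $\operatorname{End}_{\U_\kappa}(V_\kappa^{r,s})=\operatorname{End}_{S_\kappa(n;r,s)}(V_\kappa^{r,s})$ intertwining the right regular action for (b)) that the authors leave implicit.
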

\begin{proof} (a) is trivial and (b) follows from Theorem~\ref{ration}(b).\end{proof}
In the remaining part of this section, we keep the assumption that $n\ge r+s$.
Recall that a tilting module for  quantum group  is a module with Weyl filtration and co-Weyl filtration.
Similarly, we have the notion of tilting modules for quasi-hereditary algebras. See, e.g. \cite{Dk}.

 Recall that an
indecomposable tilting module is called a \textsf{partial tilting
module}. By  Theorem~1 in \cite[p208]{Dk}, $V_\kappa^{r, s}$ is a direct
sum of certain partial tilting modules of $S_\kappa(n; r, s)$. Note that any dominant
weight of $V_\kappa^{r, s}$ is of form $\tilde \lambda$ for some $(f,
\lambda)\in \Lambda_{r,s}$. So, any partial tilting module which is
a direct summand of $V_\kappa^{r, s}$  is of form $T(\tilde\lambda)$ with
highest weight $\tilde \lambda$  for some $(f, \lambda)\in
\Lambda_{r,s}$.
Let  $(T(\tilde\lambda) : \Delta (\tilde \mu)) $  be the
multiplicity of  $\Delta( \tilde \mu)$ in  $T(\tilde
\lambda)$.
  It is well known that  $(T(\tilde\lambda) :
\Delta (\tilde\mu)) $   is   independent of   a Weyl
filtration  of $T(\tilde \lambda)$.  We are going to use  $(T(\tilde\lambda) :
\Delta (\tilde\mu)) $'s to determine decomposition numbers of $\mathscr B_{r, s, \kappa}$ over the field $\kappa$.

 Let $S_\kappa(n; r, s)$-mod (resp. $\mathscr B_{r,s, \kappa}$-mod) be the category of left $S_\kappa(n; r, s)$-modules
(resp.$\mathscr B_{r,s, \kappa}$-modules). For each left $S_\kappa(n; r, s)$-module $M$, $\Hom_{S_\kappa(n; r, s)}(V_\kappa^{r,s},M)$
is a left $\mathscr B_{r,s, \kappa}$-module such that,   for any $ x\in V_\kappa^{r,s}$, $b\in\mathscr B_{r,s, \kappa}$ and $\phi\in\Hom_{S_\kappa(n; r, s)}(V_\kappa^{r,s},M)$,
\begin{equation}\label{lbrt} (b\phi)(x)=\phi(xb).\end{equation}
Also,  $V^{r,s}_\kappa\otimes_{\mathscr B_{r,s, \kappa}}N$ is a left  $S_\kappa(n; r, s)$-module for any left $\mathscr B_{r,s, \kappa}$-module $N$.

\begin{Defn}\label{mfg} Let  $\mathbf f$ and $\mathbf g$ be two functors
$$\begin{aligned}\mathbf f: S_\kappa(n; r, s)\text{-mod} &\longrightarrow \mathscr B_{r,s, \kappa}\text{-mod}\\
M &\longmapsto \Hom_{S_\kappa(n; r, s)}(V^{r,s}_\kappa,M)\\
\mathbf g:\mathscr B_{r,s, \kappa}\text{-mod} &\longrightarrow S_\kappa(n; r, s)\text{-mod}\\
N &\longmapsto V^{r,s}_\kappa\otimes_{\mathscr B_{r,s, \kappa}}N
\end{aligned} $$\end{Defn}
Since $\mathbf f$ and $\mathbf g$ are adjoint pairs (see e.g, \cite[Theorem~2.11]{JR}), we have a $\kappa$-linear isomorphism
\begin{equation}\label{ajoint}
\Hom_{S_\kappa(n; r, s)}(\mathbf g(N),M)\cong \Hom_{\mathscr B_{r,s, \kappa}}(N,\mathbf f(M)),
\end{equation}
for any  left $ S_\kappa(n; r, s)$-module $M$ and any left $ \mathscr B_{r,s, \kappa}$-module $N$.

\begin{Lemma}\label{sur}\cite[Theorem~6.11]{DDS1} $S_\kappa(n; r, s)=\varphi(\U_\kappa')$, where $\varphi$ is given in
Theorem~\ref{ration}(a)
and $\U'_\kappa=\U_\kappa(\mathfrak {sl}_n)$. \end{Lemma}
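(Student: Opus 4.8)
The plan is to derive Lemma~\ref{sur} from the surjectivity of $\varphi$ in Theorem~\ref{ration}(a) together with the fact, recorded in Lemma~\ref{bij}, that every weight of $V_\kappa^{r,s}$ has the same coordinate sum $r-s$. By Theorem~\ref{ration}(a) (which holds over $\kappa$), the map $\varphi\colon\U_\kappa\twoheadrightarrow\text{End}_{\mathscr B_{r,s,\kappa}}(V_\kappa^{r,s})=S_\kappa(n;r,s)$ is surjective. Now $\U_\kappa$ is generated by the subalgebra $\U'_\kappa$ -- which contains all the divided powers $E_i^{(\ell)},F_i^{(\ell)}$ and the torus elements $K_i^{\pm1}=q^{\pm(h_i-h_{i+1})}$, $1\le i\le n-1$ -- together with the group-likes $q^{\pm h_1},\dots,q^{\pm h_n}$ that span the whole Cartan part. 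Hence it suffices to show that $\varphi(q^{h_i})\in\varphi(\U'_\kappa)$ for $1\le i\le n$; then $\varphi(\U'_\kappa)=\varphi(\U_\kappa)=S_\kappa(n;r,s)$, as wanted.

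Next I would pass to weight idempotents. Write $V_\kappa^{r,s}=\bigoplus_\mu (V_\kappa^{r,s})_\mu$ for the weight decomposition, the sum over the weights $\mu\in\Lambda(r,s)$ of Lemma~\ref{bij}, and let $1_\mu\in S_\kappa(n;r,s)$ be the projector onto $(V_\kappa^{r,s})_\mu$ along the other weight spaces. Since $\varphi(q^{h_i})=\sum_\mu q^{\mu_i}\,1_\mu$ is a finite sum, it is enough to show that each $1_\mu$ lies in $\varphi(\U'_\kappa)$. The key point is that, because $\sum_{k=1}^n\mu_k=r-s$ is independent of $\mu$ by Lemma~\ref{bij}, the tuple of $\mathfrak{sl}_n$-weights $(\mu_1-\mu_2,\dots,\mu_{n-1}-\mu_n)$ determines $\mu$; equivalently, the $\mathfrak{gl}_n$-weight spaces of $V_\kappa^{r,s}$ coincide with its $\mathfrak{sl}_n$-weight spaces, so the $1_\mu$ are exactly the $\mathfrak{sl}_n$-weight idempotents.

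Finally I would construct these idempotents from the $\mathfrak{sl}_n$-generators, which is a standard feature of the $\mathcal Z$-form of $\U_q(\mathfrak{sl}_n)$. For each $i$ the $\mathcal Z$-form contains the Kostant-type elements $\prod_{u=1}^{t}\bigl(K_iq^{1-u}-K_i^{-1}q^{u-1}\bigr)\bigl(q^u-q^{-u}\bigr)^{-1}$, whose action on a vector of $\alpha_i^\vee$-weight $m$ is multiplication by the Gaussian binomial $\binom{m}{t}_q$; since only finitely many integers $m$ occur as $\alpha_i^\vee$-weights on $V_\kappa^{r,s}$, suitable $\kappa$-linear combinations of $\varphi(K_i^{\pm1})$ and of the images of these elements produce, for each value $c$, the projector $P_{i,c}$ onto the span of the weight spaces with $\alpha_i^\vee$-weight $c$. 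Then $1_\mu=P_{1,\mu_1-\mu_2}\cdots P_{n-1,\mu_{n-1}-\mu_n}\in\varphi(\U'_\kappa)$ by the previous paragraph, completing the proof. I expect the last step to be the main obstacle, specifically when $q$ is a root of unity: then $\varphi(K_i)$ by itself no longer separates the weight spaces (distinct integers can have $q^m$ equal), so one genuinely needs the divided-power data above, and one must check that for $m$ in the bounded set of $\alpha_i^\vee$-weights occurring in $V_\kappa^{r,s}$ the family $(q^m,\binom{m}{1}_q,\binom{m}{2}_q,\dots)$ determines $m$ -- which follows from the $q$-analogue of Lucas' theorem.
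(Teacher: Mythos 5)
Your proposal is correct, but there is nothing in the paper to compare it with: Lemma~\ref{sur} is stated as a quotation of \cite[Theorem~6.11]{DDS1} and no proof is given in this paper at all. What you have written is therefore an independent derivation from Theorem~\ref{ration}(a), and its skeleton is sound. The reduction to showing $\varphi(q^{h_i})\in\varphi(\U'_\kappa)$ is right, and the decisive observation is exactly the one you isolate: by Lemma~\ref{bij} every weight $\mu$ of $V_\kappa^{r,s}$ satisfies $\sum_{k}\mu_k=r-s$, so the $\mathfrak{gl}_n$- and $\mathfrak{sl}_n$-weight space decompositions of $V_\kappa^{r,s}$ coincide and the projectors $1_\mu$ are $\mathfrak{sl}_n$-weight idempotents; this is the real reason the lemma can hold. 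Two points deserve to be made explicit rather than left implicit. First, your construction of the projectors $P_{i,c}$ uses that the Gaussian-binomial Cartan elements $\prod_{u=1}^{t}\bigl(K_iq^{1-u}-K_i^{-1}q^{u-1}\bigr)\bigl(q^u-q^{-u}\bigr)^{-1}$ actually lie in the divided-power $\mathcal Z$-form of $\U_q(\mathfrak{sl}_n)$ (a theorem of Lusztig, not a formality), and that over an arbitrary field their eigenvalues, together with those of $K_i^{\pm1}$, separate the finitely many integers occurring as $\alpha_i^{\vee}$-weights of $V_\kappa^{r,s}$ --- you correctly flag that the latter is where root-of-unity difficulties live and that a $q$-Lucas argument (including the case of negative weights, which do occur here because of the dual factors $W^{\otimes s}$) disposes of them. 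Second, the identity $\varphi(q^{h_i})=\sum_\mu q^{\mu_i}1_\mu$ presupposes that each $1_\mu$ lies in $S_\kappa(n;r,s)=\operatorname{End}_{\mathscr B_{r,s,\kappa}}(V_\kappa^{r,s})$; this is true because the $\mathscr B_{r,s,\kappa}$-action of Proposition~\ref{bmodule} preserves weight spaces, and in any case becomes automatic once $1_\mu$ is exhibited in $\varphi(\U'_\kappa)$. With those two standard inputs granted, your argument is a complete and self-contained proof of a statement the paper only cites.
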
 

For any $i, 1\le i\le n$, let  $v_{\hat i}=v_n\otimes \cdots\otimes v_{i+1}\otimes v_{i-1} \otimes
\cdots\otimes v_1\in V_\kappa^{\otimes n-1}$, where \begin{equation}\label{hati} \hat i=(1, 2, \cdots, i-1, i+1, \cdots, n).\end{equation}
The following result has been  given in \cite[Lemma~2.2]{DDS1}.

\begin{Lemma}\label{inject}There is a well defined $\U_\kappa'$-monomorphism $\varphi: V_\kappa^*\longrightarrow V_\kappa^{\otimes n-1}$
such that $\varphi(v_i^*)=(-q)^{i} v_{ \hat{i}} \n_{(n-1)}$ where $\n_{(n-1)}=\sum_{w\in\mathfrak S_{n-1}}(-q)^{\ell(w)}T_w$.
\end{Lemma}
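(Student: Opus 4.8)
The plan is to check, directly from the action of the generators, that the prescription $v_i^*\mapsto(-q)^i v_{\hat i}\n_{(n-1)}$ defines a $\U_\kappa'$-module map and that it is injective. Since $\{v_1^*,\dots,v_n^*\}$ is a $\kappa$-basis of $V_\kappa^*$, the formula extends uniquely to a $\kappa$-linear map $\varphi\colon V_\kappa^*\to V_\kappa^{\otimes n-1}$, so ``well defined'' costs nothing and the content is $\U_\kappa'$-equivariance. As $\U_\kappa'=\U_\kappa(\mathfrak{sl}_n)$ is generated by $E_i,F_i$ ($1\le i\le n-1$) together with the $K_i^{\pm1}$, it suffices to verify $\varphi(E_i v_j^*)=E_i\varphi(v_j^*)$, $\varphi(F_i v_j^*)=F_i\varphi(v_j^*)$ and $\varphi(K_i v_j^*)=K_i\varphi(v_j^*)$ for all $i,j$. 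Throughout I use that the left $\U_\kappa$-action on $V_\kappa^{\otimes n-1}$ commutes with its right $\mathscr H_{n-1}$-action (see \cite{DJ}); hence each check reduces to computing the relevant generator on the vector $v_{\hat j}\in V_\kappa^{\otimes n-1}$ and then pushing $\n_{(n-1)}$ through from the right.

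The $K_i$-case is a weight computation: by Lemma~\ref{natural1}, $\wt(v_j^*)=-\varepsilon_j$, whereas $v_{\hat j}$, and therefore $v_{\hat j}\n_{(n-1)}$, has weight $\sum_{k\ne j}\varepsilon_k$; these differ by $\sum_k\varepsilon_k$, which is killed by every $h_i-h_{i+1}$, so $K_i$ acts by the same scalar on $v_j^*$ and on $\varphi(v_j^*)$. For $E_i$ I would expand the iterated coproduct, $\Delta^{(n-2)}(E_i)=\sum_{p=1}^{n-1} 1^{\otimes(p-1)}\otimes E_i\otimes(K_i^{-1})^{\otimes(n-1-p)}$, and feed in $E_i v_l=\delta_{l,i+1}v_i$ and $K_i^{-1}v_l=q^{-(\delta_{l,i}-\delta_{l,i+1})}v_l$ from Lemma~\ref{natural1}(a). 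Because the tensor factors of $v_{\hat j}$ have strictly decreasing subscripts, the value $i+1$ occurs in $v_{\hat j}$ at most once, and when $j\ne i$ it is immediately followed by the value $i$: thus if $j=i+1$ the subscript $i+1$ is absent and $E_i v_{\widehat{i+1}}=0$; if $j=i$ one gets $E_i v_{\hat i}=v_{\widehat{i+1}}$ on the nose (the $K_i^{-1}$'s to the right act on subscripts $\le i-1$ and are trivial); and if $j\ne i,i+1$ one gets a single stray term $q^{-1}v_{\mathbf a}$ where $\mathbf a$ has the value $i$ in two adjacent positions. Matching the scalar $-q^{-1}(-q)^{i+1}=(-q)^i$ then gives $\varphi(E_i v_j^*)=E_i\varphi(v_j^*)$ once one knows that such stray terms die under $\n_{(n-1)}$. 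The $F_i$-case is entirely symmetric, using $\Delta^{(n-2)}(F_i)=\sum_{p=1}^{n-1} K_i^{\otimes(p-1)}\otimes F_i\otimes 1^{\otimes(n-1-p)}$ and $F_i v_l=\delta_{l,i}v_{i+1}$, with the value $i+1$ now appearing twice (adjacently) in the stray terms.

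It remains to supply the two facts about the antisymmetrizer $\n_{(n-1)}$ used above. Factoring $\mathscr H_{n-1}$ over the subgroup $\langle s_k\rangle$ gives $\n_{(n-1)}=(1-q\,T_k)\,\m'$ with $\m'\in\mathscr H_{n-1}$, so if $a_k=a_{k+1}$ then $v_{\mathbf a}(1-q\,T_k)=v_{\mathbf a}-q\cdot q^{-1}v_{\mathbf a}=0$ by Proposition~\ref{bmodule}(b) (in the conventions of Sections~4--5), hence $v_{\mathbf a}\n_{(n-1)}=0$; since every stray term above has an adjacent repeated entry, this is exactly what the equivariance checks needed. For injectivity, note that each $v_{\hat i}$ has strictly decreasing subscripts, so the vectors $v_{\hat i}\n_{(n-1)}$ ($1\le i\le n$) are precisely the standard basis vectors of the top quantum exterior power $\Lambda_q^{n-1}V_\kappa$, indexed by the $(n-1)$-subsets $\{1,\dots,n\}\setminus\{i\}$, which are pairwise distinct; they are therefore $\kappa$-linearly independent (see \cite{DJ}), whence so are the $\varphi(v_i^*)=(-q)^i v_{\hat i}\n_{(n-1)}$, and $\varphi$ is injective.

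The only genuinely technical part is the $E_i$/$F_i$ bookkeeping in the second paragraph — tracking which $K_i^{\pm1}$-strings are nontrivial and checking that every term other than the expected one carries a repeated adjacent tensor factor — but once that is done everything collapses to the scalar identities recorded above. Of course, since this lemma is a restatement of \cite[Lemma~2.2]{DDS1}, one may alternatively just cite that result.
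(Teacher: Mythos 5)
The paper offers no proof of this lemma at all: it is stated as a quotation of \cite[Lemma~2.2]{DDS1}, so your direct verification is genuinely more than what the paper records, and it is essentially correct. The coproduct bookkeeping is right: for $j=i$ the unique factor $v_{i+1}$ of $v_{\hat i}$ is followed only by subscripts $\le i-1$, so the trailing $K_i^{-1}$'s act trivially and $E_iv_{\hat i}=v_{\widehat{i+1}}$; the scalar identity $-q^{-1}(-q)^{i+1}=(-q)^i$ matches Lemma~\ref{natural1}(b); the stray terms for $j\ne i,i+1$ have an adjacent repeated subscript and die under $\n_{(n-1)}=(1-qT_k)\m'$ because $v_{\mathbf a}T_k=q^{-1}v_{\mathbf a}$ in the conventions of Sections~4--5; and for injectivity the disjointness of the supports (the $v_{\hat i}\n_{(n-1)}$ for distinct $i$ are combinations of $v_{\mathbf a}$ over disjoint sets of multi-indices, each nonzero by triangularity) already suffices without invoking $\Lambda_q^{n-1}V_\kappa$.

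One point deserves repair. You assert that $\U'_\kappa$ is generated by $E_i,F_i,K_i^{\pm1}$, but $\U'_\kappa$ is the base change of the Lusztig $\mathcal Z$-form, which at a root of unity is generated by the divided powers $E_i^{(\ell)},F_i^{(\ell)}$; checking equivariance for $E_i,F_i$ over $\kappa$ alone does not formally give equivariance for $E_i^{(\ell)}$ when $[\ell]!=0$ in $\kappa$. The fix is the one the paper itself uses in the proof of Proposition~\ref{hwv}: your formula for $\varphi$ has coefficients in $\mathcal Z$, so carry out the identical computation over $\mathbb Q(q)$, where it yields equivariance for $E_i^{(\ell)}=E_i^\ell/[\ell]!$ and $F_i^{(\ell)}$; this restricts to the $\mathcal Z$-lattices and then specializes to $\kappa$. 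With that one-sentence addendum your argument is complete.
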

There is a $\U_\kappa'$-monomorphism from $V^{r,s}_\kappa$ to $V_\kappa^{\otimes r+(n-1)s}$ induced by  $\varphi$ in Lemma~\ref{inject}. By abuse of notation,  we denote this monomorphism  by $\varphi$.
Recall that there is an  anti-automorphism $\tau$ of $\U_\kappa$ given by
$$\tau(q^{h_i})=q^{h_i}, \tau(E_i)=F_i, \tau(F_i)=E_i. $$
For any positive integer $m$, Stokke \cite{AS} defined  a  symmetric bilinear  form $(\ ,\ ):V_\kappa^{\otimes m}\times V_\kappa^{\otimes m}\longrightarrow \kappa$ such that
\begin{equation}\label{contra} (v_{\mathbf i},v_{\mathbf j} )=q^{\beta(\mathbf i)}\delta_{\mathbf i,\mathbf j},
\end{equation}
where $\mathbf i, \mathbf j\in I(n,m)$, $\beta(\mathbf i)$ is the number of the pairs $(a,b)$ for which $a<b$ and $i_a\neq i_b$. It is proved in
\cite[Theorem~5.2]{AS} that the bilinear form $(\ , \ )$ in \eqref{contra} is the $\U_{\kappa}$-contravariant form in the sense
\begin{equation} \label{contra12}  (uv,w)=(v,\tau(u)w), u\in\U_{\kappa} ,v,w\in V_\kappa^{\otimes m}.\end{equation}
The following can be considered as a counterpart of the form in \eqref{contra}.

\begin{Defn}\label{mixform} Let  $(\ ,\ ): V^{r,s}_\kappa\times V^{r,s}_\kappa\longrightarrow \kappa$ be the bilinear form such that, for any
$(\mathbf i,\mathbf j), (\mathbf k, \mathbf l) \in I(n,r)\times I^*(n, s)$,
\begin{equation}\label{mf1} (v_{\mathbf i|\mathbf j},v_{\mathbf k|\mathbf  l})=q^{2(j_1+j_2+\cdots+j_s)+\beta(\widehat{\mathbf i|\mathbf j})}\delta_{\mathbf i,\mathbf k}\delta_{\mathbf j, \mathbf l},\end{equation}
where $\widehat{\mathbf i|\mathbf j}=(i_1,i_2,\cdots,i_r, \hat{j_1}, \hat{j_2}, \ldots, \hat{j_s})$ and $\hat j_i$ is defined in \eqref{hati}
\end{Defn}

\begin{Lemma}\label{keycontra}  Let $\phi$ be the  bilinear form on $V^{r,s}_\kappa$ defined  in \eqref{mf1}.
\begin{enumerate} \item $\phi$  is non-degenerate symmetric and  $\U'_\kappa$-contravariant.
\item   $\phi(xb,y)=\phi(x,y\sigma(b))$, for all $ b\in \mathscr B_{r,s, \kappa}$ and $ x,y\in V^{r,s}_\kappa$, where $\sigma$ is the anti-involution defined in Lemma~\ref{inv}.\end{enumerate}
\end{Lemma}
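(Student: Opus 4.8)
The plan is to deduce both assertions from the corresponding facts about the form on $V_\kappa^{\otimes m}$ in \eqref{contra}--\eqref{contra12} by transport along the embedding $\varphi : V_\kappa^{r,s}\hookrightarrow V_\kappa^{\otimes r+(n-1)s}$ of Lemma~\ref{inject}, combined with a direct inspection of how the generators of $\mathscr B_{r,s,\kappa}$ act via Proposition~\ref{bmodule}. First I would record that the ``extra'' factor $q^{2(j_1+\dots+j_s)}$ in \eqref{mf1} and the choice $\widehat{\mathbf i|\mathbf j}$ are exactly engineered so that
$$\phi(v_{\mathbf i|\mathbf j}, v_{\mathbf k|\mathbf l}) = c\cdot (\varphi(v_{\mathbf i|\mathbf j}), \varphi(v_{\mathbf k|\mathbf l}))$$
for a single nonzero scalar $c\in\kappa$ independent of $(\mathbf i,\mathbf j),(\mathbf k,\mathbf l)$; this is a bookkeeping computation using $\varphi(v_j^*)=(-q)^j v_{\hat j}\n_{(n-1)}$ and the explicit diagonal values $(v_{\mathbf a},v_{\mathbf b})=q^{\beta(\mathbf a)}\delta_{\mathbf a,\mathbf b}$, together with the fact that $\n_{(n-1)}$ acts on $V_\kappa^{\otimes n-1}$ by a scalar on the relevant isotypic piece. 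Given this, non-degeneracy of $\phi$ is immediate from non-degeneracy of $(\ ,\ )$ on $V_\kappa^{\otimes r+(n-1)s}$ restricted to the (direct summand) image of $\varphi$, symmetry of $\phi$ follows from symmetry of $(\ ,\ )$, and $\U_\kappa'$-contravariance of $\phi$ follows from \eqref{contra12} together with the fact that $\varphi$ is a $\U_\kappa'$-homomorphism and $\tau$ is the relevant anti-automorphism of $\U_\kappa'$. This disposes of part (a).

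For part (b), the cleanest route is to check $\phi(xb,y)=\phi(x,y\sigma(b))$ on a generating set $b\in\{e_1,g_k,g_k^*\}$ with $x=v_{\mathbf i|\mathbf j}$, $y=v_{\mathbf k|\mathbf l}$ running over the basis, since $\sigma$ fixes all generators and both sides are bilinear. For $b=g_k$ one uses the four cases of Proposition~\ref{bmodule}(b)--(d): when $i_k=i_{k+1}$ both sides are $q^{-1}\phi(x,y)$; when $i_k\neq i_{k+1}$ one gets a transposition term plus (in the $i_k>i_{k+1}$ case) a $(q^{-1}-q)$-correction, and one verifies the identity by pairing $v_{\mathbf i s_k|\mathbf j}$ against $v_{\mathbf k|\mathbf l}$ and using that $\beta(\widehat{\mathbf i s_k|\mathbf j})=\beta(\widehat{\mathbf i|\mathbf j})$ (the pair $(k,k+1)$ contributes $0$ to $\beta$ precisely when $i_k=i_{k+1}$, and is symmetric under swapping), so the two diagonal weights match; the $g_k^*$ case is identical after noting that $j_1+\dots+j_s$ is invariant under permuting the $j$'s. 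The case $b=e_1$ is the substantive one: here $v_{\mathbf i|\mathbf j}e_1$ introduces a sum $\sum_t v_{\hat{\mathbf i}}\otimes v_t\otimes v_t^*\otimes v_{\hat{\mathbf j}}^*$ with coefficient $\delta_{i_1,j_1}q^{-n-1+2i_1}$, and one must check that pairing this against $v_{\mathbf k|\mathbf l}$ equals pairing $v_{\mathbf i|\mathbf j}$ against $v_{\mathbf k|\mathbf l}e_1$; because $e_1$ forces $i_1=j_1$ on the left and $k_1=l_1$ on the right, one reduces to verifying a single scalar identity relating $q^{-n-1+2i_1}$, the power $q^{2(j_1+\dots)}$ in $\phi$, and the $\beta$-weights of $\widehat{\mathbf i|\mathbf j}$ versus $\widehat{\hat{\mathbf i}\,t\,|\,t\,\hat{\mathbf j}}$.

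The main obstacle is precisely this $e_1$-computation: tracking the exponents of $q$ through the ``cap-cup'' move so that the $q^{-n-1+2i_1}$ coming from Proposition~\ref{bmodule}(a), the relocation of an index from the $W$-factors to the $V$-factors, and the $\beta$-statistic all conspire to give a symmetric answer. I expect this to work because the form in \eqref{mf1} was designed (via the $\varphi$-pullback of the Stokke form) exactly so that $e_1$ becomes self-adjoint, but verifying it requires care with the asymmetric conventions $v_{\mathbf i}=v_{i_r}\otimes\cdots\otimes v_{i_1}$ versus $v_{\mathbf j}^*=v_{j_1}^*\otimes\cdots\otimes v_{j_s}^*$. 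An alternative that avoids the index gymnastics is to observe that $\psi:\mathscr B_{r,s,\kappa}\to\End_{\U_\kappa}(V_\kappa^{r,s})$ is surjective (Theorem~\ref{ration}(b)), that part (a) already gives a $\U_\kappa'$-contravariant—hence $S_\kappa(n;r,s)$-module—structure on $\phi$, and then identify $\sigma$ with the adjoint anti-involution of $\End_{\U_\kappa}(V_\kappa^{r,s})$ with respect to $\phi$ by checking it agrees on generators; this reduces (b) to the single statement that the adjoint of $e_1$ equals $e_1$, which can be read off diagram-theoretically since $e_1$ is a symmetric Brauer diagram fixed by $\sigma$.
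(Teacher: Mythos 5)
Your proposal follows essentially the same route as the paper: part (a) is obtained by pulling back Stokke's contravariant form along the embedding $\varphi$ of Lemma~\ref{inject} (the paper records the precise relation $(\varphi(x),\varphi(y))=\bigl(\sum_{w\in\mathfrak S_{n-1}}q^{2\ell(w)}\bigr)^{s}\phi(x,y)$), and part (b) is verified on the generators $g_k$, $g_k^*$, $e_1$ against basis vectors exactly as you describe, with the $e_1$ case reducing to the scalar identity $q^{2(i+k)-n-1}q^{\beta(\widehat{\mathbf i|\mathbf j})}$ on both sides. One caveat: the scalar $\bigl(\sum_{w}q^{2\ell(w)}\bigr)^{s}$ relating $\phi$ to the pulled-back form can vanish in $\kappa$ when $q$ is a root of unity, so your deduction of non-degeneracy (and of contravariance directly over $\kappa$) from the restricted form does not quite work as stated; the paper avoids this by establishing contravariance over $\mathbb Q(q)$, observing it is an identity over $\mathcal Z$, and then specializing, while non-degeneracy is immediate from the diagonal formula \eqref{mf1} itself rather than from the restriction.
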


\begin{proof} In fact, the bilinear form $\phi$ is $\U'$-contravariant over $\mathcal  Z$ and hence over $\kappa$. In order to see it, we consider
  the $\U'$-contravariant form on $V^{\otimes r+(n-1)s}$ in ~\eqref{contra} over $\mathbb Q(q)$.
By Lemma~\ref{inject}, there is a $\U'$-monomorphism $\varphi: V^{r,s}\longrightarrow V^{\otimes r+(n-1)s}$ over $\mathbb Q(q)$. So, there is a
$\U'$-contravariant form, say $(\ ,\ )_q$ on $V^{r,s}$, such that
$$(x,y)_q=(\varphi(x),\varphi(y)), x,y\in V^{r,s}. $$
By (\ref{contra}), $(\ ,\ )_q= (\sum_{w\in\mathfrak S_{n-1}}q^{2\ell(w)})^{s} \phi$, forcing   $\phi$ to be $\U'$-contravariant on $V^{r,s}$ over $\mathbb Q(q)$ and hence over $\mathcal Z$. The others  in (a) are clear.

We claim $\phi(v_{\mathbf i|\mathbf j}T_i,v_{\mathbf k|\mathbf l})=\phi(v_{\mathbf i|\mathbf j},v_{\mathbf k|\mathbf l}T_i)$. Without loss of generality, we can assume that  $ \mathbf j=\mathbf l$, $i=1$, and $\mathbf i=(i_1,i_2)$, $i_1<i_2$ and
$ i_1=k_2$, $i_2=k_1$. In this case, we have $\beta(\widehat{\mathbf k|\mathbf l})=\beta(\widehat{\mathbf i|\mathbf j})$. A routine computation verifies our claim. By symmetry, $\phi(v_{\mathbf i|\mathbf j}T^*_j,v_{\mathbf k|\mathbf l})=\phi(v_{\mathbf i|\mathbf j},v_{\mathbf k|\mathbf l}T_j^*)$. Finally, we verify $\phi(v_{\mathbf i|\mathbf j}e_1,v_{\mathbf k|\mathbf l})=\phi (v_{\mathbf i|\mathbf j},v_{\mathbf k|\mathbf l}e_1)$. In this case, we can assume that
$\mathbf i=(i),\mathbf k=(k)$, and $j_1=i$, $l_1=k$, $j_m=l_m$, for $m=2,3,\cdots,s$. So, $\beta(\widehat{\mathbf k|\mathbf l})=\beta(\widehat{\mathbf i|\mathbf j})$ and
$$\phi(v_{\mathbf i|\mathbf j}e_1,v_{\mathbf k|\mathbf l})=q^{2(i+k)-n-1}q^{\beta(\widehat{\mathbf k|\mathbf l})}= q^{2(i+k)-n-1}q^{\beta(\widehat{\mathbf i|\mathbf j})}=\phi (v_{\mathbf i|\mathbf j},v_{\mathbf k|\mathbf l}e_1).$$
This completes the proof of (b).
\end{proof}

By Lemma~\ref{sur}, any left $S_\kappa(n; r, s)$-module can be considered as a left $\U'_\kappa$-module.
For any left $S_\kappa(n; r, s)$-module $N$, let $N^{\circ}$  be the left $S_\kappa(n; r, s)$-module  such that $N^{\circ}=N^*$  as $\kappa$-vector space, and the action is given by
\begin{equation} \label{contramod}
(u\phi)(x)=\phi(\tau(u)x), x\in N, u\in\U_\kappa',\phi\in N^*\end{equation}
For any right $\mathscr B_{r,s, \kappa}$-module $M$, let  $M^{\circ}$ be the right  $\mathscr B_{r,s, \kappa}$-module  such that $M^{\circ}=M^*$ as $\kappa$-vector space, and the action is given by
\begin{equation} \label{contramod1} (\phi b)(y)=\phi(y\sigma(b)), y\in M, b\in\mathscr B_{r,s, \kappa},\phi\in M^*\end{equation}

\begin{Lemma}\label{iso} As  $(S_\kappa(n; r, s),\mathscr B_{r,s, \kappa} )$ bi-modules, $V^{r,s}_\kappa\cong  (V^{r,s}_\kappa)^\circ$. \end{Lemma}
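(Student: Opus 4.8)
The statement asserts that the mixed tensor space $V^{r,s}_\kappa$ is isomorphic to its "contravariant dual" $(V^{r,s}_\kappa)^\circ$ as a $(S_\kappa(n;r,s),\mathscr B_{r,s,\kappa})$-bimodule, where the $\circ$-construction on the left is the $\U'_\kappa$-contravariant dual \eqref{contramod} and on the right is the $\sigma$-twisted dual \eqref{contramod1}. The natural candidate for the isomorphism is the map induced by the nondegenerate bilinear form $\phi$ of Definition~\ref{mixform}: send $x\in V^{r,s}_\kappa$ to the functional $\phi(x,-)\in (V^{r,s}_\kappa)^*$. Call this map $\Phi$. Since $\phi$ is nondegenerate (Lemma~\ref{keycontra}(a)) and $V^{r,s}_\kappa$ is finite-dimensional, $\Phi$ is a $\kappa$-linear isomorphism; the only thing to check is that it intertwines both the left $\U'_\kappa$- (equivalently $S_\kappa(n;r,s)$-) action and the right $\mathscr B_{r,s,\kappa}$-action.

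First I would verify compatibility with the left action. For $u\in\U'_\kappa$ and $x\in V^{r,s}_\kappa$, on one hand $\Phi(ux)=\phi(ux,-)$, and on the other the module structure \eqref{contramod} gives $(u\Phi(x))(y)=\Phi(x)(\tau(u)y)=\phi(x,\tau(u)y)$. These agree precisely because $\phi$ is $\U'_\kappa$-contravariant, $\phi(ux,y)=\phi(x,\tau(u)y)$, which is exactly Lemma~\ref{keycontra}(a). Since $S_\kappa(n;r,s)=\varphi(\U'_\kappa)$ by Lemma~\ref{sur}, this immediately upgrades to compatibility with the $S_\kappa(n;r,s)$-action. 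Second, compatibility with the right $\mathscr B_{r,s,\kappa}$-action: for $b\in\mathscr B_{r,s,\kappa}$, $\Phi(xb)=\phi(xb,-)$, while \eqref{contramod1} gives $(\Phi(x)b)(y)=\Phi(x)(y\sigma(b))=\phi(x,y\sigma(b))$; these agree by Lemma~\ref{keycontra}(b), which states $\phi(xb,y)=\phi(x,y\sigma(b))$. Hence $\Phi$ is a bimodule homomorphism, and being a linear isomorphism it is a bimodule isomorphism.

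**Where the work sits.** With Lemma~\ref{keycontra} already in hand the present lemma is essentially formal: the two contravariance identities (a) and (b) are precisely the "adjointness" relations needed for $\phi$ to identify $V^{r,s}_\kappa$ with its contravariant dual on each side, and nondegeneracy plus finite-dimensionality turn the resulting bimodule map into an isomorphism. So I do not expect a genuine obstacle here; the only point requiring a line of care is the bookkeeping that the left $S_\kappa(n;r,s)$-module structure on both $V^{r,s}_\kappa$ and $(V^{r,s}_\kappa)^\circ$ is the one pulled back along $\varphi:\U'_\kappa\twoheadrightarrow S_\kappa(n;r,s)$, so that "$\U'_\kappa$-contravariant" is the same as "$S_\kappa(n;r,s)$-contravariant" — this is exactly what Lemma~\ref{sur} supplies. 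One should also note that $\tau$ (the anti-automorphism used in \eqref{contramod}) restricts to $\U'_\kappa$, so the formula \eqref{contramod} makes sense for $S_\kappa(n;r,s)$-modules; this is implicit in the setup and needs no separate argument.

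Thus the proof is: define $\Phi(x)=\phi(x,-)$; it is a linear isomorphism since $\phi$ is nondegenerate and $\dim_\kappa V^{r,s}_\kappa<\infty$; it is left-linear by Lemma~\ref{keycontra}(a) together with Lemma~\ref{sur}; it is right-linear by Lemma~\ref{keycontra}(b); hence it is the desired bimodule isomorphism.
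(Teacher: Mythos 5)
Your proposal is correct and is essentially identical to the paper's proof: the paper also defines $\Phi(x)(y)=(x,y)$ using the form of Definition~\ref{mixform} and cites Lemma~\ref{keycontra} for the bimodule compatibility, with your write-up merely spelling out the contravariance verifications in more detail.
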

\begin{proof}  The required isomorphism  $\Phi$ follows from  Lemma~\ref{keycontra} if we define
 $\Phi:V^{r,s}_\kappa\longrightarrow (V^{r,s}_\kappa)^\circ$ such that
$\Phi(x)(y)=(x,y)$ for all $x,y\in V^{r,s}_\kappa$, where $(\ ,\ )$ is given in Definition~\ref{mixform}.
\end{proof}

\begin{Lemma}\label{g} Suppose that $T$ is an indecomposable direct summand of  $S_\kappa(n; r, s)$-module $V^{r,s}_\kappa$.  Then $\mathbf g\mathbf f(T)\cong T $.
\end{Lemma}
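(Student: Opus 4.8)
The plan is to show that the unit and counit of the adjoint pair $(\mathbf g,\mathbf f)$ become isomorphisms on the summand $T$, which is the standard ``double centralizer'' argument adapted to the present cellular/tilting setting. First I would record the obvious natural transformation $\eta_T\colon \mathbf g\mathbf f(T)=V^{r,s}_\kappa\otimes_{\mathscr B_{r,s,\kappa}}\Hom_{S_\kappa(n;r,s)}(V^{r,s}_\kappa,T)\longrightarrow T$ given by evaluation, $x\otimes\phi\mapsto\phi(x)$, which is a map of left $S_\kappa(n;r,s)$-modules; the goal is to prove $\eta_T$ is an isomorphism. By additivity of $\mathbf f$ and $\mathbf g$ it suffices to treat $T$ a single partial tilting direct summand $T(\tilde\mu)$ of $V^{r,s}_\kappa$, and by the Krull--Schmidt theorem it is enough to check that $\eta$ induces an isomorphism on $V^{r,s}_\kappa$ itself, i.e.\ to prove the statement for $T=V^{r,s}_\kappa$ and then restrict to the summand. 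So the real content is: $\mathbf g\mathbf f(V^{r,s}_\kappa)\cong V^{r,s}_\kappa$.

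For that I would use Lemma~\ref{FGiso}: part (b) gives $\mathbf f(V^{r,s}_\kappa)=\Hom_{S_\kappa(n;r,s)}(V^{r,s}_\kappa,V^{r,s}_\kappa)=\End_{S_\kappa(n;r,s)}(V^{r,s}_\kappa)\cong\mathscr B_{r,s,\kappa}$ as a $\mathscr B_{r,s,\kappa}$-bimodule (using $n\ge r+s$ and Theorem~\ref{ration}(b), together with $S_\kappa(n;r,s)=\End_{\mathscr B_{r,s,\kappa}}(V^{r,s}_\kappa)$ by definition). Hence $\mathbf g\mathbf f(V^{r,s}_\kappa)=V^{r,s}_\kappa\otimes_{\mathscr B_{r,s,\kappa}}\mathscr B_{r,s,\kappa}\cong V^{r,s}_\kappa$ canonically, and one checks this identification is exactly $\eta$ (under the isomorphism of Lemma~\ref{FGiso}(b) the element $1\in\mathscr B_{r,s,\kappa}$ corresponds to $\id_{V^{r,s}_\kappa}$, so $x\otimes 1\mapsto x$). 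Restricting the direct sum decomposition $V^{r,s}_\kappa=\bigoplus_i T(\tilde\mu_i)$ through this isomorphism, and using that $\mathbf g\mathbf f$ is additive, forces $\mathbf g\mathbf f(T)\cong T$ for each summand $T$. Alternatively one can phrase the whole thing via the adjunction isomorphism \eqref{ajoint}: $\Hom_{S_\kappa(n;r,s)}(\mathbf g\mathbf f(T),T)\cong\Hom_{\mathscr B_{r,s,\kappa}}(\mathbf f(T),\mathbf f(T))$, and exhibit the map corresponding to $\id_{\mathbf f(T)}$ as the desired isomorphism, checking injectivity and surjectivity by the same $\End_{S_\kappa(n;r,s)}(V^{r,s}_\kappa)\cong\mathscr B_{r,s,\kappa}$ input.

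The main obstacle I anticipate is not the abstract adjunction bookkeeping but making sure the $(S_\kappa(n;r,s),\mathscr B_{r,s,\kappa})$-bimodule identifications are compatible on the nose — in particular that the left $S_\kappa(n;r,s)$-action on $\mathbf g\mathbf f(T)$ coming from $V^{r,s}_\kappa$ in the tensor product matches the one on $T$ through $\eta_T$, and that the evaluation map $\eta_T$ is well defined over the tensor product over $\mathscr B_{r,s,\kappa}$ (i.e.\ balanced: $xb\otimes\phi\mapsto\phi(xb)=(b\phi)(x)\mapsfrom x\otimes b\phi$, using \eqref{lbrt}). Once $\eta_{V^{r,s}_\kappa}$ is seen to be the canonical iso $V^{r,s}_\kappa\otimes_{\mathscr B_{r,s,\kappa}}\mathscr B_{r,s,\kappa}\xrightarrow{\sim}V^{r,s}_\kappa$, everything else is a formal consequence of Krull--Schmidt; Lemma~\ref{iso} and Lemma~\ref{keycontra} are available if one instead wants to run the argument through the contravariant duality, but for this particular statement the double-centralizer route via Lemma~\ref{FGiso} seems cleanest.
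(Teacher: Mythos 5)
Your proposal is correct and follows essentially the same route as the paper: both arguments reduce to showing $\mathbf g\mathbf f(V^{r,s}_\kappa)\cong V^{r,s}_\kappa$ via Lemma~\ref{FGiso}(b) (i.e.\ $\mathbf f(V^{r,s}_\kappa)\cong\mathscr B_{r,s,\kappa}$ under the hypothesis $n\ge r+s$), and then transfer the isomorphism to each indecomposable summand — the paper by noting the evaluation map $\mathbf g\mathbf f(T)\to T$ is surjective and comparing dimensions, you by the naturality/additivity of the counit, which amounts to the same thing.
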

\begin{proof} Since we are assuming that $r+s\le n$, $ \mathbf f(V^{r,s}_\kappa)\cong \mathscr B_{r,s, \kappa}$ and  $\mathbf  g\mathbf f(V^{r,s}_\kappa)\cong V^{r,s}_\kappa$.
It is easy to see that there is an epimorphism from  $gf(M)$ to $ M$ for any indecomposable direct summand  $M$ of $V^{r,s}_\kappa$ as $S_\kappa(n; r, s)$-modules.
Comparing the dimensions yields the isomorphism as required.
\end{proof}
\begin{Lemma}\label{schur} Any partial tilting module which appears as an indecomposable direct summand of $V^{r,s}_\kappa$ is of
form $T(\tilde{\lambda})$ for some $(f, \lambda')\in \Lambda_{r, s}$ with  $\lambda$ being $e$-regular.  Further we have the following isomorphisms as left $\mathscr B_{r,s, \kappa}$-modules:
\begin{enumerate}
\item for $(\ell, \mu)\in \Lambda_{r, s}$, $\mathbf f(\nabla(\tilde{\mu}))\cong\Hom_{S_\kappa(n; r, s)}(\Delta(\tilde{\mu}),V^{r,s}_\kappa)$;
\item $\mathbf f(T(\tilde{\lambda}))\cong P(f,\lambda')$.
\end{enumerate}
\end{Lemma}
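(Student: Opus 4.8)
The plan is to exploit the adjoint pair $(\mathbf f, \mathbf g)$ from Definition~\ref{mfg} together with the fact that $V^{r,s}_\kappa$ is a tilting module for $S_\kappa(n;r,s)$ (established earlier) and the self-duality $V^{r,s}_\kappa\cong (V^{r,s}_\kappa)^\circ$ from Lemma~\ref{iso}. The first statement — that every indecomposable summand of $V^{r,s}_\kappa$ has the form $T(\tilde\lambda)$ with $\lambda$ being $e$-regular — follows from the discussion preceding the lemma (any dominant weight of $V^{r,s}_\kappa$ is some $\tilde\lambda$ with $(f,\lambda)\in\Lambda_{r,s}$, and $V^{r,s}_\kappa$ is a direct sum of partial tilting modules by Donkin's theorem): the only genuine point to check is the $e$-regularity constraint on which $\tilde\lambda$ actually occur as highest weights of summands, which I would deduce by counting, comparing the number of indecomposable summands of $V^{r,s}_\kappa$ (equivalently, the number of indecomposable projective $\mathscr B_{r,s,\kappa}$-modules, via $\mathbf f(V^{r,s}_\kappa)\cong \mathscr B_{r,s,\kappa}$ from Lemma~\ref{FGiso}) against $|\bar\Lambda_{r,s}|$ as described in Theorem~\ref{main1}.

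For part (a), I would apply $\mathbf f=\Hom_{S_\kappa(n;r,s)}(V^{r,s}_\kappa,-)$ to the co-Weyl module $\nabla(\tilde\mu)$ and use Lemma~\ref{iso}: the functor $\Hom_{S_\kappa(n;r,s)}(V^{r,s}_\kappa,-)$ and the functor $\Hom_{S_\kappa(n;r,s)}(-,V^{r,s}_\kappa)$ are intertwined by the contravariant duality $\circ$ on $S_\kappa(n;r,s)$-mod together with $(V^{r,s}_\kappa)^\circ\cong V^{r,s}_\kappa$. Since $\nabla(\tilde\mu)^\circ\cong\Delta(\tilde\mu)$ (the standard fact that the contravariant dual of a co-Weyl module is the Weyl module with the same highest weight, which holds for the quasi-hereditary algebra $S_\kappa(n;r,s)$ by Theorem~\ref{qha}), one gets
\[
\mathbf f(\nabla(\tilde\mu))=\Hom_{S_\kappa(n;r,s)}(V^{r,s}_\kappa,\nabla(\tilde\mu))\cong\Hom_{S_\kappa(n;r,s)}(\Delta(\tilde\mu),(V^{r,s}_\kappa)^\circ)^{\!\star}\cong\Hom_{S_\kappa(n;r,s)}(\Delta(\tilde\mu),V^{r,s}_\kappa),
\]
the isomorphisms being as right $\mathscr B_{r,s,\kappa}$-modules once one tracks the module structures through \eqref{lbrt}, \eqref{contramod}, \eqref{contramod1} and Lemma~\ref{keycontra}(b). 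By Proposition~\ref{cell-weyl} this last space is $C(f,\mu')$, which is the expected identification.

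For part (b), the strategy is to show $\mathbf f(T(\tilde\lambda))$ is an indecomposable projective $\mathscr B_{r,s,\kappa}$-module and then identify its head. Indecomposability and projectivity: since $T(\tilde\lambda)$ is an indecomposable summand of the tilting module $V^{r,s}_\kappa$ and $\mathbf f$ restricted to such summands is fully faithful — this is exactly the content of Lemma~\ref{g}, $\mathbf g\mathbf f(T)\cong T$, which shows $\mathbf f$ sends nonzero summands to nonzero modules with isomorphic endomorphism rings, hence indecomposable to indecomposable — and since $\mathbf f(V^{r,s}_\kappa)\cong\mathscr B_{r,s,\kappa}$ is projective with $\mathbf f$ additive on summands, $\mathbf f(T(\tilde\lambda))$ is a nonzero indecomposable projective, i.e. some $P(g,\nu)$. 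To pin down $(g,\nu)=(f,\lambda')$: use the Weyl filtration $0=T_0\subset\cdots\subset T_m=T(\tilde\lambda)$ with top section $\Delta(\tilde\lambda)$ (the highest weight sits at the top of the standard filtration of a tilting module), apply the left-exact $\mathbf f$ and part (a) to see that $\mathbf f(T(\tilde\lambda))$ has a filtration by cell modules $C(g_i,\mu_i')$ whose ``top'' piece surjects onto $C(f,\lambda')$; since $C(f,\lambda')$ has simple head $D^{f,\lambda'}$ (as $\lambda$ is $e$-regular, i.e. $\lambda'$ is $e$-restricted), the head of $P(g,\nu)$ is $D^{f,\lambda'}$, forcing $P(g,\nu)=P(f,\lambda')$.

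The main obstacle I anticipate is bookkeeping the module structures under the various dualities — getting the left/right $\mathscr B_{r,s,\kappa}$-actions and the $\U'_\kappa$- versus $S_\kappa(n;r,s)$-actions consistent across Lemmas~\ref{keycontra}, \ref{iso}, the adjunction \eqref{ajoint}, and the contravariant functors \eqref{contramod}--\eqref{contramod1} — so that the chain of isomorphisms in part (a) is genuinely an isomorphism of $\mathscr B_{r,s,\kappa}$-modules and not merely of $\kappa$-vector spaces. The representation-theoretic skeleton (tilting $\Rightarrow$ projective under the Schur-type functor, identification of heads via cell modules) is standard; the delicate part is that $S_\kappa(n;r,s)$ is a \emph{proper} quotient of $S_\kappa(n,r+(n-1)s)$, so one must make sure that ``$\circ$'' for $S_\kappa(n;r,s)$-modules behaves as expected — this is where Theorem~\ref{qha} (quasi-heredity of $S_\kappa(n;r,s)$) is essential, guaranteeing $\nabla(\tilde\mu)^\circ\cong\Delta(\tilde\mu)$ and that tilting modules have the usual Weyl/co-Weyl filtration formalism.
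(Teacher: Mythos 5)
Your overall strategy matches the paper's: part (a) is proved via the self-duality $V^{r,s}_\kappa\cong(V^{r,s}_\kappa)^\circ$ of Lemma~\ref{iso} together with $\nabla(\tilde\mu)\cong\Delta(\tilde\mu)^\circ$, and part (b) via the equivalence between direct summands of $V^{r,s}_\kappa$ and projective $\mathscr B_{r,s,\kappa}$-modules, Lemma~\ref{g}, the adjunction \eqref{ajoint} and Proposition~\ref{cell-weyl}. One cosmetic point in (a): the contravariant duality gives $\Hom(M,N)\cong\Hom(N^\circ,M^\circ)$ directly, so the extra linear dual $(-)^{\star}$ in your chain of isomorphisms should not be there.

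There is, however, a concrete slip in your part (b). For a partial tilting module $T(\tilde\lambda)$, the Weyl module $\Delta(\tilde\lambda)$ sits at the \emph{bottom} of any $\Delta$-filtration (it is a submodule), not at the top; it is $\nabla(\tilde\lambda)$ that occurs as the top quotient of a $\nabla$-filtration (the paper uses exactly this fact, with the correct orientation, in the proof of Theorem~\ref{block2}). Moreover, since part (a) computes $\mathbf f(\nabla(\tilde\mu))$, the cell filtration of $\mathbf f(T(\tilde\lambda))$ must be produced from a $\nabla$-filtration of $T(\tilde\lambda)$, not from a Weyl filtration. The step is repaired by using the surjection $T(\tilde\lambda)\twoheadrightarrow\nabla(\tilde\lambda)$ and the exactness of $\Hom_{S_\kappa(n;r,s)}(V^{r,s}_\kappa,-)$ on $\nabla$-filtered short exact sequences (as $V^{r,s}_\kappa$ has a $\Delta$-filtration), which yields $\mathbf f(T(\tilde\lambda))\twoheadrightarrow C(f,\lambda')$; your simple-head argument then identifies the projective, \emph{provided} $\lambda'$ is already known to be $e$-restricted. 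On that last point your counting argument is under-specified: matching the number of summands with $|\bar\Lambda_{r,s}|$ tells you how many highest weights occur, not which ones. The paper's order of argument avoids this: $\mathbf f(T(\tilde\mu))$ is some $P(f,\lambda')$ with $\lambda'$ $e$-restricted simply because that is how the indecomposable projectives are indexed (Theorem~\ref{main1}), and $(f,\lambda')$ is then pinned down by the two-sided dominance comparison $\Hom(T(\tilde\mu),\nabla(\tilde\nu))\cong\Hom(P(f,\lambda'),C(k,\nu'))$, evaluated at $\nu=\mu$ and at $\nu=\lambda$. Adopting that order delivers the $e$-regularity claim as a by-product rather than assuming it.
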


\begin{proof} It follows from Lemma~\ref{iso} that $V^{r,s}_\kappa\cong (V^{r,s}_\kappa)^\circ$.
By \cite[Proposition~4.1.6]{Dk},  $\nabla(\tilde{\mu})\cong \Delta(\tilde{\mu})^\circ$.
In order to prove (a), it suffices to  prove  the following isomorphism as  left $\mathscr B_{r,s, \kappa}$-modules:
 \begin{equation}\label{dual}
 \Hom_{S_\kappa(n; r, s)}(\Delta(\tilde{\mu}),V^{r,s}_\kappa)\cong  \Hom_{S_\kappa(n; r, s)}((V^{r,s}_\kappa)^\circ,\Delta(\tilde{\mu})^\circ).
 \end{equation}
Obviously, $ \Psi:  \Hom_{S_\kappa(n; r, s)}(\Delta(\tilde{\mu}),V^{r,s}_\kappa)\rightarrow  \Hom_{S_\kappa(n; r, s)}((V^{r,s}_\kappa)^\circ,\Delta(\tilde{\mu})^\circ)
$ given by
$$\Psi(\phi)(v^*):x\mapsto v^*(\phi(x)),$$ for any
$ \phi\in \Hom_{S_\kappa(n; r, s)}(\Delta(\tilde{\mu}),V^{r,s}_\kappa), v\in V^{r,s}_\kappa, x\in \Delta(\tilde{\mu})  $
is a $\kappa$-linear isomorphism.  For any  $b\in\mathscr B_{r,s, \kappa}$,
 we have $b\Psi(\phi)(v^*)=\Psi(\phi)(v^*b)$ and
$$\begin{aligned} \Psi(b\phi)(v^*)&:x\mapsto v^*((b\phi)(x))=v^*(\phi(x)\sigma(b)),\cr
\Psi(\phi)(v^*b)&: x \mapsto v^*b(\phi(x))= v^*(\phi(x)\sigma(b)).\cr\end{aligned}$$
So $\Psi(b\phi)=b\Psi(\phi)$, and (a) follows.

(b) By \cite[II,Proposition 2.1(c)]{ARS}, The functor $\mathbf f$ induces a category equivalence between the direct sums of direct summands of
the $ S_\kappa(n; r, s)$-module $V^{r,s}_\kappa$ and the projective $\mathscr B_{r,s, \kappa}$-modules. So $ \mathbf f(T(\tilde{\mu}))$ is an indecomposable projective $\mathscr B_{r,s, \kappa}$-module. For any $(k,\nu')\in\Lambda_{r,s}$
by Lemma~\ref{g}, Proposition~\ref{cell-weyl},  (\ref{ajoint}) and (a), we have   $\kappa$-linear isomorphism
\begin{equation} \label{decom-hom}\begin{aligned} 
& \Hom_{S_\kappa(n; r, s)} (T(\tilde{\mu}),\nabla(\tilde{\nu}))\cong \Hom_{S_\kappa(n; r, s)} (\mathbf g\mathbf f(T(\tilde{\mu})),\nabla(\tilde{\nu}))\\
\cong & \Hom_{S_\kappa(n; r, s)} (\mathbf f (T(\tilde{\mu})) , \mathbf f(\nabla(\tilde{\nu})))\cong
\Hom_{\mathscr B_{r,s, \kappa}}( P(f,\lambda'),C(k,\nu'))
\\ \end{aligned}
\end{equation}
for some $(f, \lambda')\in \Lambda_{r, s}$ such that $\mathbf f(T(\tilde \mu))=P(f,\lambda')$ with $\lambda$ being $e$-regular.
We remark that $C(k, \nu')$ is considered as a right $\mathscr B_{r, s, \kappa}$-module in  Proposition~\ref{cell-weyl}. Using anti-involution $\sigma$ in Lemma~\ref{inv}, it can be considered as the 
left $\mathscr B_{r, s, \kappa}$-module in \eqref{decom-hom}.
By \cite[Lemma~2.18]{Ma},
\begin{equation}\label {decom-hom1}   \dim_\kappa\Hom_{\mathscr B_{r,s}}( P(f,\lambda'),C(k,\nu'))=[C(k,\nu'): D^{f, \lambda'}].\end{equation}
 We have  $(\ell,\mu')\unrhd (f,\lambda')$ by assuming $\mu=\nu$. If $\nu=\lambda$, then  $$\Hom_{S_\kappa(n; r, s)}(T(\tilde{\mu}),\nabla(\tilde{\lambda}))\neq 0,$$
forcing $\tilde \lambda\unlhd \tilde \mu$. So,   $(\ell,\mu')\unlhd (f,\lambda')$, $f=\ell$ and $\mu=\lambda$.  This proves (b).\end{proof}

\begin{Theorem}\label{compoeq} Suppose $(f, \lambda'), (\ell, \mu')\in \Lambda_{r, s}$
such that $\lambda$ is $e$-regular.
 Then
$$(T(\tilde \lambda): \Delta({\tilde \mu}))=[C(\ell, \mu'): D^{f, \lambda'}].$$  \end{Theorem}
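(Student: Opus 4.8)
The plan is to obtain the statement as a corollary of Lemma~\ref{schur} together with one standard fact about quasi-hereditary algebras. Since $S_\kappa(n;r,s)$ is quasi-hereditary by Theorem~\ref{qha} (and $n\ge r+s$ throughout this section), the tilting module $T(\tilde\lambda)$ carries a Weyl filtration and $(T(\tilde\lambda):\Delta(\tilde\mu))$ is independent of the choice of filtration. First I would recall that for any quasi-hereditary algebra one has $\operatorname{Ext}^1_{S_\kappa(n;r,s)}(\Delta(\tilde\nu),\nabla(\tilde\mu))=0$ and $\dim_\kappa\Hom_{S_\kappa(n;r,s)}(\Delta(\tilde\nu),\nabla(\tilde\mu))=\delta_{\tilde\nu,\tilde\mu}$ for all dominant weights $\tilde\nu,\tilde\mu$ (see, for instance, \cite{CPS} or \cite{Dk}). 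Applying $\Hom_{S_\kappa(n;r,s)}(-,\nabla(\tilde\mu))$ to a Weyl filtration of $T(\tilde\lambda)$, the $\operatorname{Ext}^1$-terms all vanish, the associated long exact sequences break up into short exact ones, and a count of dimensions gives
$$(T(\tilde\lambda):\Delta(\tilde\mu))=\dim_\kappa\Hom_{S_\kappa(n;r,s)}(T(\tilde\lambda),\nabla(\tilde\mu)).$$

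Next I would transport the right-hand side to the walled Brauer algebra. By Lemma~\ref{schur}(b), $\mathbf f(T(\tilde\lambda))\cong P(f,\lambda')$, and the chain of isomorphisms \eqref{decom-hom} worked out there --- which strings together Lemma~\ref{g} (so that $\mathbf g\mathbf f$ fixes the indecomposable summands of $V_\kappa^{r,s}$), the adjunction \eqref{ajoint}, Lemma~\ref{schur}(a), and Proposition~\ref{cell-weyl} (which supplies $\Delta(\tilde\mu)^{\diamond}\cong C(\ell,\mu')$) --- already yields, since $(\ell,\mu')\in\Lambda_{r,s}$,
$$\Hom_{S_\kappa(n;r,s)}(T(\tilde\lambda),\nabla(\tilde\mu))\cong\Hom_{\mathscr B_{r,s,\kappa}}(P(f,\lambda'),C(\ell,\mu')).$$
Finally I would invoke \eqref{decom-hom1}, which is Mathas~\cite[Lemma~2.18]{Ma} applied to the cellular algebra $\mathscr B_{r,s,\kappa}$ and the projective cover $P(f,\lambda')$ of $D^{f,\lambda'}$: it identifies $\dim_\kappa\Hom_{\mathscr B_{r,s,\kappa}}(P(f,\lambda'),C(\ell,\mu'))$ with $[C(\ell,\mu'):D^{f,\lambda'}]$. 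Concatenating the three displayed relations proves the theorem.

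I expect essentially no obstacle here, since \eqref{decom-hom} and \eqref{decom-hom1} are already established in the proof of Lemma~\ref{schur}; the single ingredient not proved earlier in the paper is the identity $(T:\Delta)=\dim\Hom(T,\nabla)$, which is a standard consequence of the $\operatorname{Ext}$- and $\Hom$-vanishing facts recalled above. The one point that will require care is purely bookkeeping: one must check that the bipartition $\lambda'$ with $P(f,\lambda')=\mathbf f(T(\tilde\lambda))$ in Lemma~\ref{schur}(b) is precisely the conjugate of the $\lambda$ for which $\tilde\lambda=\phi(f,\lambda)$, that the hypothesis ``$\lambda$ is $e$-regular'' is exactly what makes $T(\tilde\lambda)$ occur as a summand of $V_\kappa^{r,s}$ (equivalently $D^{f,\lambda'}\neq 0$, see Theorem~\ref{main1}), and that the anti-involution $\sigma$ of Lemma~\ref{inv} is used consistently to move $C(\ell,\mu')$ between its natural right-module structure in Proposition~\ref{cell-weyl} and the left-module structure in which it appears in \eqref{decom-hom}.
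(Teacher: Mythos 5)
Your proposal is correct and follows essentially the same route as the paper: the paper's proof also reduces the statement to the standard identity $(T(\tilde\lambda):\Delta(\tilde\mu))=\dim_\kappa\Hom_{S_\kappa(n;r,s)}(T(\tilde\lambda),\nabla(\tilde\mu))$ and then cites the chain of isomorphisms \eqref{decom-hom} together with \eqref{decom-hom1} from the proof of Lemma~\ref{schur}. Your write-up merely makes explicit the Ext- and Hom-vanishing facts behind that identity and the bookkeeping with $\sigma$, which the paper leaves implicit.
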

\begin{proof} Since $(T(\tilde \lambda): \Delta({\tilde \mu}))=\dim_\kappa \Hom_{S_\kappa(n; r, s)}(T(\tilde{\mu}),\nabla(\tilde{\nu}))$,  the result follows from \eqref{decom-hom}--\eqref{decom-hom1}
\end{proof}

In the remaining part of this section, we consider right $\mathscr B_{r, s, \kappa}$-modules. 
Of course, Theorem~\ref{compoeq} can be read  for right  $\mathscr B_{r, s, \kappa}$-modules since 
any left  $\mathscr B_{r, s, \kappa}$-module can be considered as a right  $\mathscr B_{r, s, \kappa}$-module
via the anti-involution $\sigma$ in Lemma~\ref{inv}.

\begin{Theorem}\label{block2} Suppose $e<\infty$.  Let $\mathscr
B_{r,s, \kappa}$ be defined over $\kappa$ with defining parameter
$\rho=q^n$. If  $(f, \lambda'), (\ell, \mu')\in \Lambda_{r, s}$,
then $C(f, \lambda')$ and $C(\ell, \mu')$ are in the same $\mathscr
B_{r, s, \kappa}$ block if and only if $\Delta({\tilde\lambda})$ and
$\Delta({\tilde \mu})$ are in the same $S_\kappa(n; r, s)$-block.
\end{Theorem}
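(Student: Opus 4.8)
The plan is to transport the block decomposition of $S_\kappa(n;r,s)$ to $\mathscr B_{r,s,\kappa}$ through the tilting module $V_\kappa^{r,s}$, using Theorem~\ref{compoeq} as the dictionary. Recall that, since $n\ge r+s$, $\mathscr B_{r,s,\kappa}\cong\End_{S_\kappa(n;r,s)}(V_\kappa^{r,s})$ (Theorem~\ref{ration}(b), Lemma~\ref{FGiso}(b)), that $V_\kappa^{r,s}$ is a tilting $S_\kappa(n;r,s)$-module, and that $C(f,\lambda')\cong\Hom_{S_\kappa(n;r,s)}(\Delta(\tilde\lambda),V_\kappa^{r,s})$ as $\mathscr B_{r,s,\kappa}$-modules (Proposition~\ref{weyl-cell}). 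Both algebras are cellular (Theorems~\ref{celb} and \ref{qha}), so by \cite[3.9.8]{GL} their blocks are exactly the cell-linkage classes, and by Theorem~\ref{compoeq} the cell-linkage of $\mathscr B_{r,s,\kappa}$ is controlled by the Weyl-filtration multiplicities $(T(\tilde\lambda):\Delta(\tilde\mu))$ of tilting modules of $S_\kappa(n;r,s)$; so the statement amounts to matching two linkage relations.

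For the direction ``same $\mathscr B_{r,s,\kappa}$-block $\Rightarrow$ same $S_\kappa(n;r,s)$-block'' I would use the block idempotents. Writing $1=\sum_B e_B$ in $S_\kappa(n;r,s)$, we get $V_\kappa^{r,s}=\bigoplus_B e_BV_\kappa^{r,s}$, and since $\Hom_{S_\kappa(n;r,s)}(e_BV_\kappa^{r,s},e_{B'}V_\kappa^{r,s})=0$ for $B\ne B'$ this yields an algebra decomposition $\mathscr B_{r,s,\kappa}\cong\bigoplus_B\End_{S_\kappa(n;r,s)}(e_BV_\kappa^{r,s})$. Every $S_\kappa(n;r,s)$-map $\Delta(\tilde\lambda)\to V_\kappa^{r,s}$ has image in $e_BV_\kappa^{r,s}$ for the block $B$ of $\tilde\lambda$, so $C(f,\lambda')$ is a module over the single summand $\End_{S_\kappa(n;r,s)}(e_BV_\kappa^{r,s})$; hence a common $\mathscr B_{r,s,\kappa}$-block of $C(f,\lambda')$ and $C(\ell,\mu')$ forces a common $S_\kappa(n;r,s)$-block of $\tilde\lambda$ and $\tilde\mu$.

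For the converse it suffices to show that each summand $\End_{S_\kappa(n;r,s)}(e_BV_\kappa^{r,s})$ is an indecomposable algebra. The key fact is that $V_\kappa^{r,s}$ contains every Weyl module $\Delta(\tilde\mu)$ with $(\ell,\mu)\in\Lambda_{r,s}$ in its Weyl filtration: as $V_\kappa^{r,s}$ is tilting, $(V_\kappa^{r,s}:\Delta(\tilde\mu))=\dim_\kappa\Hom_{S_\kappa(n;r,s)}(V_\kappa^{r,s},\nabla(\tilde\mu))=\dim_\kappa\mathbf f(\nabla(\tilde\mu))=\dim_\kappa C(\ell,\mu')>0$ by Lemma~\ref{schur}(a) and Proposition~\ref{weyl-cell}. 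Let $\hat S_B$ be the block-$B$ part of the Ringel dual $\End_{S_\kappa(n;r,s)}\big(\bigoplus_{(g,\gamma)\in\Lambda_{r,s}}T(\tilde\gamma)\big)$; it is indecomposable because the corresponding block of $S_\kappa(n;r,s)$ is (Ringel duality preserves the number of blocks). By Lemma~\ref{schur}(b) the tilting summands of $V_\kappa^{r,s}$ are exactly the $T(\tilde\gamma)$ with $\gamma$ $e$-regular, so $\End_{S_\kappa(n;r,s)}(e_BV_\kappa^{r,s})$ is Morita equivalent to $e\,\hat S_B\,e$, where $e$ is the idempotent of $\hat S_B$ cutting out those summands. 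The key fact makes $e$ full: given $\tilde\gamma\in B$, the Weyl module $\Delta(\tilde\gamma)$ occurs in some $e$-regular $T(\tilde\nu)$, hence $T(\tilde\gamma)$ and $T(\tilde\nu)$ share a Weyl-filtration factor, so $\Hom_{S_\kappa(n;r,s)}(T(\tilde\gamma),T(\tilde\nu))\ne0$ and $L_{\hat S_B}(\tilde\gamma)$ is a composition factor of the indecomposable projective $\hat S_B$-module attached to $T(\tilde\nu)$; thus $\hat S_B e\,\hat S_B=\hat S_B$. Therefore $\End_{S_\kappa(n;r,s)}(e_BV_\kappa^{r,s})$ is Morita equivalent to the indecomposable algebra $\hat S_B$, hence is a single $\mathscr B_{r,s,\kappa}$-block, and since $C(f,\lambda')$ lives over the summand indexed by the block of $\tilde\lambda$, a common $S_\kappa(n;r,s)$-block of $\tilde\lambda,\tilde\mu$ yields a common $\mathscr B_{r,s,\kappa}$-block of $C(f,\lambda'),C(\ell,\mu')$.

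The step requiring the most care is the Ringel-duality bookkeeping in the last paragraph: identifying $\End_{S_\kappa(n;r,s)}(e_BV_\kappa^{r,s})$ with an idempotent truncation of the Ringel dual (here Lemma~\ref{schur}(b) already says which tilting modules are summands of $V_\kappa^{r,s}$), checking that $e$ is full, and invoking that the Ringel dual of an indecomposable quasi-hereditary algebra is indecomposable. If one would rather not invoke Ringel duality, the alternative is a direct cell-linkage chain argument in $\mathscr B_{r,s,\kappa}$ via Theorem~\ref{compoeq} and the fact that all Weyl modules $\Delta(\tilde\mu)$ occur in $V_\kappa^{r,s}$; there the delicate point is routing a linkage path through $e$-regular weights, for which one can fall back on the explicit description of the blocks of the (rational) $q$-Schur algebra together with the criterion for $D^{f,\lambda'}\ne0$.
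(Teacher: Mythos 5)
Your forward direction (same $\mathscr B_{r,s,\kappa}$-block $\Rightarrow$ same $S_\kappa(n;r,s)$-block) is correct and takes a genuinely different route from the paper: the paper deduces it from Theorem~\ref{compoeq} together with the fact that $\Delta(\tilde\mu)$ is the unique bottom section of any Weyl filtration of the indecomposable tilting module $T(\tilde\mu)$, whereas your decomposition $\mathscr B_{r,s,\kappa}\cong\bigoplus_B\End_{S_\kappa(n;r,s)}(e_BV_\kappa^{r,s})$ into orthogonal two-sided ideals gets the same conclusion with no $e$-regularity bookkeeping.

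The converse, however, has a genuine gap at exactly the step you flagged. The fullness claim $\hat S_Be\hat S_B=\hat S_B$ does not follow from $L_{\hat S_B}(\tilde\gamma)$ being a composition factor of $P_{\hat S_B}(\tilde\nu)$: for $e=\sum_{\nu\ e\text{-regular}}e_{\tilde\nu}$, one has $\hat S_Be\hat S_B=\hat S_B$ if and only if every simple $\hat S_B$-module occurs in the \emph{top} of some $P_{\hat S_B}(\tilde\nu)$ with $\nu$ $e$-regular, i.e.\ if and only if every weight of $B$ is $e$-regular. Whenever $B$ contains a non-$e$-regular weight (the typical non-semisimple situation), $e\hat S_Be$ has strictly fewer simples than $\hat S_B$ and cannot be Morita equivalent to it; and indecomposability is not in general inherited by idempotent truncations (the path algebra of $1\leftarrow 2\rightarrow 3$ truncated at $e_1+e_3$ is decomposable), so you cannot conclude that $\End_{S_\kappa(n;r,s)}(e_BV_\kappa^{r,s})$ is indecomposable this way. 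The paper avoids this by working with the projectives of $S_\kappa(n;r,s)$ itself rather than with the Ringel dual: a counting argument shows that $\clubsuit$ and $\diamond$ put the indecomposable $\mathscr B_{r,s,\kappa}$-module summands of $V_\kappa^{r,s}$ in bijection with \emph{all} the PIMs $P(\tilde\gamma)$ of $S_\kappa(n;r,s)$, so for every $\tilde\gamma\in B$ (not only the $e$-regular ones) one has an indecomposable right $\mathscr B_{r,s,\kappa}$-module $P(\tilde\gamma)^\diamond$ carrying a cell filtration whose sections are the $C(k,\nu')$ with $(P(\tilde\gamma):\Delta(\tilde\nu))\neq 0$; a linkage chain inside $B$ is then transported step by step through these indecomposable modules. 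To salvage your framework you would need to replace the fullness claim by a genuine connectivity argument for the $e$-regular weights of $B$ under the relation of sharing a Weyl factor in their tilting modules, which is essentially the assertion being proved.
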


\begin{proof} Suppose that  $C(f, \lambda')$ and $C(\ell, \mu')$ are in the same
$\mathscr B_{r, s, \kappa}$-block. Without loss of any generality, we assume
that $ [C(f, \lambda'): D^{(\ell, \mu')}]\neq 0$. So, $\mu$ is
$e$-regular.  By Theorem~\ref{compoeq},
$(T(\tilde\mu):\Delta({\tilde\lambda}))\neq 0$. Since
$\Delta({\tilde\mu})$ is the unique bottom  section of any Weyl
filtration of the partial tilting module  $T(\tilde \mu)$,
$\Delta({\tilde \lambda})$ and $\Delta({\tilde \mu})$ are in the
same $S_\kappa(n; r, s)$-block.

Conversely, let $Y$ be a $\mathscr B_{r,s, \kappa}$-module which is an
indecomposable direct summand of $V_\kappa^{r,s}$.  Then $  Y^\clubsuit\neq 0$. By definition of $S_\kappa(n; r, s)$, $ Y^\clubsuit$  is a direct summand of
$S_\kappa(n; r,s)$.

 We claim that $Y^\clubsuit $ is indecomposable.
Otherwise,  $Y^\clubsuit $ is a direct
sum of certain principal indecomposable $S_\kappa(n;r,s)$-modules, say
$P(\tilde\lambda)$'s. By Proposition~\ref{weyl-cell},
$0\neq P(\tilde\lambda)^\diamond$, which is a direct
summand of $\mathscr B_{r,s, \kappa}$-module $V_\kappa^{r,s}$. Counting the number
of indecomposable direct summands of $\mathscr B_{r,s, \kappa}$-module
$V_\kappa^{r,s}$  gives a contradiction. So, both  $Y^\clubsuit $ and  $P(\tilde\lambda)^\diamond$ are
indecomposable and hence $Y^\clubsuit =P(\tilde\lambda)$ for some
$\tilde\lambda\in \Lambda^+(r,s)$.

Suppose $\Delta({\tilde\lambda})$ and $\Delta({\tilde \mu})$ are in
the same $S_\kappa(n; r, s)$-block. Without loss of any generality,
we can assume $(P(\tilde\lambda): \Delta ({\tilde \mu}))\neq 0$.
Applying $\diamond $  to $P(\tilde\lambda)$, we see that both
$C(f, \lambda')$ and $C(\ell, \mu')$ appear as sections of a cell
filtration of the indecomposable right $\mathscr B_{r,s, \kappa}$-module
$P(\tilde\lambda)^\diamond$.
 So, $C(f, \lambda')$ and $C(\ell,
\mu')$ are in the same block.
\end{proof}

By Theorem~\ref{block2}, we know that  blocks of
$\mathscr B_{r,s, \kappa}$ can be determined by those of rational $q$-Schur algebras.
We remark that we will study  blocks of $\mathscr B_{r,s, \kappa}$ in details elsewhere.

Each  indecomposable   direct summand of right $\mathscr
B_{r,s, \kappa}$-module $V_\kappa^{r, s}$ will be called a  Young module.
 Let $Y({f, \lambda'})=P(\tilde\lambda)^\diamond$. Using standard
arguments on tilting module $V_\kappa^{r,s}$, we have the following result
immediately.

\begin{Cor}\label{blockeq} Suppose $e<\infty$.  Let $\mathscr B_{r,s, \kappa}$ be defined over $\kappa$ with $\rho=q^n$ for $n\gg0$.
Suppose $(f, \lambda'), (\ell, \mu')\in \Lambda_{r, s}$. Then
$Y({f, \lambda'})$ has a filtration of right cell modules of
$\mathscr B_{r,s, \kappa}$ with bottom section $C(f, \lambda')$. Further, the multiplicity of $ C(\ell, \mu')$ in the previous 
filtration of 
$Y({f, \lambda'})$ is $(P(\tilde\lambda):
\Delta({\tilde\mu}))=[\Delta(\tilde \mu): L^{\tilde \lambda}], $ where $L^{\tilde \lambda}$ is the irreducible $S_\kappa(n; r, s)$-module with highest weight $\tilde \lambda$.
\end{Cor}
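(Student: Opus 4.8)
The plan is to combine the tilting–theory statements already established for the rational $q$-Schur algebra with the functorial dictionary between $S_\kappa(n;r,s)$-modules and $\mathscr B_{r,s,\kappa}$-modules. First I would recall that, by Theorem~\ref{qha} and standard quasi-hereditary theory, the projective cover $P(\tilde\lambda)$ of the irreducible $S_\kappa(n;r,s)$-module $L^{\tilde\lambda}$ admits a Weyl filtration whose sections are the $\Delta(\tilde\mu)$, with the multiplicity of $\Delta(\tilde\mu)$ equal to $[\,\Delta(\tilde\mu):L^{\tilde\lambda}\,]$ by Brauer–Humphreys reciprocity. Moreover, because $\tilde\lambda$ is the highest weight of $P(\tilde\lambda)$, the section $\Delta(\tilde\lambda)$ occurs exactly once and sits at the bottom of such a filtration (it is the unique quotient Weyl section; all other $\tilde\mu$ appearing satisfy $\tilde\mu\rhd\tilde\lambda$, so $\Delta(\tilde\lambda)$ can be placed last).

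Next I would apply the exact-on-$\Delta$-filtrations functor $\diamond=\Hom_{S_\kappa(n;r,s)}(-,V^{r,s}_\kappa)$. Since $V^{r,s}_\kappa$ is a tilting module for $S_\kappa(n;r,s)$ (shown in the proof of Proposition~\ref{weyl-cell}), $\Hom_{S_\kappa(n;r,s)}(-,V^{r,s}_\kappa)$ sends a short exact sequence of modules with Weyl filtration to a short exact sequence (no higher Ext obstruction, as $\mathrm{Ext}^1(\Delta(\tilde\mu),T)=0$ for a tilting module $T$). Applying $\diamond$ termwise to a Weyl filtration of $P(\tilde\lambda)$ therefore produces a filtration of $P(\tilde\lambda)^\diamond=Y(f,\lambda')$ whose sections are the modules $\Delta(\tilde\mu)^\diamond$. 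By Proposition~\ref{cell-weyl}, $\Delta(\tilde\mu)^\diamond\cong C(\ell,\mu')$ as right $\mathscr B_{r,s,\kappa}$-modules, where $\phi(\ell,\mu)=\tilde\mu$. Reversing the filtration (applying $\diamond$ reverses the order of the sections), the bottom section becomes $\Delta(\tilde\lambda)^\diamond\cong C(f,\lambda')$, which proves the first assertion, and the multiplicity of $C(\ell,\mu')$ equals the multiplicity of $\Delta(\tilde\mu)$ in the chosen Weyl filtration of $P(\tilde\lambda)$, namely $(P(\tilde\lambda):\Delta(\tilde\mu))$. Finally, the identification $(P(\tilde\lambda):\Delta(\tilde\mu))=[\Delta(\tilde\mu):L^{\tilde\lambda}]$ is Brauer–Humphreys reciprocity for the quasi-hereditary algebra $S_\kappa(n;r,s)$.

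The only point needing a little care — and the one I expect to be the main obstacle — is verifying that $\diamond$ really is exact on the relevant short exact sequences and that it carries indecomposable summands of $V^{r,s}_\kappa$ to the Young modules as claimed, i.e. that $Y(f,\lambda')=P(\tilde\lambda)^\diamond$ is indeed indecomposable and that all cell modules $C(\ell,\mu')$ that arise are genuinely of this shape with the indexing $\phi(\ell,\mu)=\tilde\mu$. This is exactly the content already assembled in Lemma~\ref{schur}, Lemma~\ref{g}, Theorem~\ref{compoeq} and Theorem~\ref{block2}: the functor $\mathbf f$ gives a category equivalence between summands of $V^{r,s}_\kappa$ and projective $\mathscr B_{r,s,\kappa}$-modules, and $\diamond$ agrees with $\mathbf f$ up to the duality of Lemma~\ref{iso}. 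So the proof is essentially a matter of stringing these together; no new estimate is required, only the bookkeeping of which weight $\tilde\mu$ goes with which pair $(\ell,\mu')$, and the observation that the hypothesis $n\gg 0$ (more precisely $n\ge r+s$) is what makes Proposition~\ref{cell-weyl} and Lemma~\ref{FGiso}(b) available.
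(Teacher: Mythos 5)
Your argument is correct and is exactly the ``standard argument on the tilting module $V^{r,s}_\kappa$'' that the paper invokes for this corollary without spelling it out: take a Weyl filtration of $P(\tilde\lambda)$ with top quotient $\Delta(\tilde\lambda)$ and multiplicities given by Brauer--Humphreys reciprocity, use that $\diamond=\Hom_{S_\kappa(n;r,s)}(-,V^{r,s}_\kappa)$ is exact on $\Delta$-filtered short exact sequences because $V^{r,s}_\kappa$ is tilting, and convert the sections via Proposition~\ref{cell-weyl}. The only quibble is a wording slip: $\tilde\lambda$ is not the highest weight of $P(\tilde\lambda)$ but the \emph{lowest} weight among its Weyl sections, so $\Delta(\tilde\lambda)$ is the top \emph{quotient} of the Weyl filtration and becomes the bottom submodule only after applying the contravariant functor $\diamond$ --- which is what you in fact conclude.
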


The following result is motivated by Donkin and Tange's work in \cite{DT}.


\begin{Theorem} \label{einfty}If  $\rho^2= q^{2a}$ for some  $a\in \mathbb Z$ with $|a|\le r+s-2$ and $e=\infty$, then $[C(f, \lambda'): D^{\ell, \mu'}]$
is the same as that under the assumption $e\gg0$.
\end{Theorem}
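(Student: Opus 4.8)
\emph{Proof strategy.} The plan is to deduce the case $e=\infty$ from the case of finite but large $e$, already settled in Theorem~\ref{compoeq} through the rational $q$-Schur algebra, by a base-change argument along the ``slice'' $\rho^2=q^{2a}$ of the parameter space. The governing principle is that on this slice the decomposition numbers of $\mathscr B_{r,s}$ depend only on $a$ (and $r,s$) once $q$ avoids a finite set of roots of unity whose orders are bounded in terms of $r+s$; both a $q$ which is not a root of unity (the case $e=\infty$) and a primitive root of unity of sufficiently large order (the case $e\gg 0$) lie outside that finite set, so they must yield the same decomposition numbers.

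Concretely, I would fix a primitive root of unity $\zeta\in\mathbb C^{\times}$ of large order $e_0$ and work over the discrete valuation ring $\mathcal O:=\mathbb C[q,q^{-1}]_{(q-\zeta)}$ with $\rho=q^{a}$; here $q-q^{-1}\in\mathcal O^{\times}$ for $e_0>2$, so $\mathscr B_{r,s}$ is defined over $\mathcal O$, and by base change from Theorem~\ref{celb} it is cellular over $\mathcal O$. Its fraction field is $K=\mathbb C(q)$, over which $q$ is not a root of unity ($e=\infty$), and its residue field is $\mathbb C$, over which $q\mapsto\zeta$ ($e=e_0\gg 0$). Reducing the cell modules $C(f,\lambda)$, the invariant forms $\phi_{f,\lambda}$ and the heads $D^{f,\lambda}$ modulo the maximal ideal, the decomposition-map formalism for cellular algebras over a DVR (\cite{GL}) produces a unitriangular non-negative integer matrix $E$ with $D^{(\zeta)}=D^{(K)}E$, where $D^{(\zeta)}$ and $D^{(K)}$ are the decomposition matrices of $\mathscr B_{r,s}$ over $\mathbb C$ and over $K$. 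Running the same construction with $\mathcal O$ replaced by $\mathbb C[q,q^{-1}]_{(q-q_0)}$ for any non-root-of-unity $q_0\in\mathbb C^{\times}$ gives $D^{(q_0)}=D^{(K)}E'$ with $E'$ unitriangular. Hence the theorem follows once $E=E'=\mathrm{id}$, since then $D^{(\zeta)}=D^{(K)}=D^{(q_0)}$.

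To see $E=\mathrm{id}$ (and likewise $E'$) I would argue in two steps. First, the indexing sets of irreducibles over $K$ and over $\mathbb C$ agree: by Theorem~\ref{main1} these are $\{(f,\lambda)\mid 0\le f\le\min\{r,s\},\ \lambda\ \text{$e$-restricted}\}$ (with the modification for $\delta=0$, $r=s$), and since every bipartition occurring has total size at most $r+s<e_0$, both ``$e_0$-restricted'' and ``$\infty$-restricted'' are vacuous. Second, each $D^{f,\lambda}_K$ must stay irreducible on reduction; as $D^{f,\lambda}=C(f,\lambda)/\Rad\phi_{f,\lambda}$, this is equivalent to the Gram matrix of $\phi_{f,\lambda}$ having the same rank over $K$ and over $\mathbb C$. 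The inequality $\rank_{\mathbb C}\le\rank_K$ is automatic, and for the reverse it suffices to exhibit, for each $(f,\lambda)$, a maximal non-vanishing minor of $\phi_{f,\lambda}$ that is a unit of $\mathcal O$. This is where the factorizations of the Gram determinants of cell modules of $\mathscr B_{r,s}$ obtained in \cite{Rsong} enter: on the slice $\rho=q^{a}$ every factor of such a determinant, and of the minors one needs, is either the factor $\rho^2-q^{2a}$ — which vanishes identically along the slice and hence affects both regimes in exactly the same way — or a factor of cyclotomic type $q^{2m}-1$ with $|m|$ bounded by a constant $N=N(r,s)$; choosing $e_0>N$ forces $\zeta$ not to be a zero of any such factor, and $q_0$ is likewise never a zero since it is not a root of unity.

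The main obstacle is precisely this second step: one needs, beyond the semisimplicity criterion of Theorem~\ref{semimain}, enough control over the Gram determinants of $\mathscr B_{r,s}$ and their minors to certify the ranks, and one must verify that after the substitution $\rho=q^{a}$ no factor other than $\rho^2-q^{2a}$ and cyclotomic factors of order at most $N(r,s)$ can occur — in particular that any ``$\delta-c$'' type factors are, on this slice, already absorbed into the conditions $\rho^2=q^{2b}$ with $|b|\le r+s-2$ recorded in Theorem~\ref{semimain}. This is a bookkeeping refinement of the computations underlying \cite{Rsong}; granting it, the argument above gives $E=E'=\mathrm{id}$, so that $[C(f,\lambda'):D^{\ell,\mu'}]$ is the same for $e=\infty$ as for $e\gg 0$. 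The remaining case $a=0$, i.e.\ $\delta=0$ and $\rho^2=1$, is handled identically using the $\delta=0$ clauses of Theorems~\ref{semimain} and~\ref{main1}.
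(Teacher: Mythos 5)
Your overall architecture is the same as the paper's: realize $\mathscr B_{r,s}$ as a free lattice over a localization of $\kappa[t,t^{-1}]$ along the slice $\rho^2=t^{2a}$, and compare the decomposition matrix at the generic fibre ($e=\infty$) with the one at a residue field where $q$ specializes to a root of unity of large order. The divergence, and the problem, is in how you close the argument. You reduce everything to showing that the unitriangular adjustment matrices $E$ and $E'$ are the identity, and you propose to do this by proving that the Gram matrix of each invariant form $\phi_{f,\lambda}$ has the same rank over $K$ and over the residue field, by exhibiting a maximal non-vanishing minor that is a unit of $\mathcal O$. That step is not a ``bookkeeping refinement'' of \cite{Rsong}: what \cite{Rsong} provides is a semisimplicity criterion (Theorem~\ref{semimain}), i.e.\ control of when the full Gram determinants vanish, not a factorization of the maximal non-vanishing minors of degenerate Gram matrices on the locus $\rho=q^{a}$. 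On that slice the forms $\phi_{f,\lambda}$ are genuinely degenerate over $K$, so the object you need to control is a minor of unspecified location and size, and no such computation exists in the cited sources. Since you yourself flag this as ``the main obstacle'' and proceed by ``granting it,'' the proposal as written does not prove the theorem.

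The paper avoids this entirely by invoking the general result of Donkin--Tange \cite[5.2(1)]{DT}: for an algebra that is finitely generated and free over a Dedekind domain $R$ with fraction field $K$, the decomposition map $Grot(A_K)\to Grot(A_{R/\mathfrak m})$ is an isomorphism for all but finitely many maximal ideals $\mathfrak m$. Applied to $\kappa[t,t^{-1}]$ with $\rho^2=t^{2a}$ (cellularity over this ring coming from Theorem~\ref{celb}), this gives for free that specializations $t\mapsto\epsilon$ with $\epsilon^2$ of sufficiently large order lie outside the finite bad set, so the decomposition matrices over $K$ and over $\kappa_\epsilon$ coincide; the identification of cell modules under this isomorphism is immediate because each $C(f,\lambda)$ is itself an $R$-lattice of $C(f,\lambda)_K$. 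If you replace your Gram-minor step by this citation (and keep your correct observations that the labelling sets of simples agree because $e$-restrictedness is vacuous for $e>r+s$, and that your two specializations $q\mapsto\zeta$ and $q\mapsto q_0$ can both be chosen outside the finite bad set), your argument becomes complete and essentially identical to the paper's.
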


\begin{proof}
 Let $t$ be an indeterminate.  We consider $\kappa [t, t^{-1}]$ which is
 the localization of $\kappa [t]$ at $t$. So, $\kappa [t,
 t^{-1}]$ is a Dedeking ring with quotient field $K=\kappa (t)$.
Since $\mathscr B_{r,s}$ is cellular over $\mathbb Z[q, q^{-1},
\rho, \rho^{-1}, (q-q^{-1})^{-1}]$, we see that $\mathscr B_{r,s}$
is free over $\kappa [t, t^{-1}]$ with defining parameters $\rho$
and $t$ such that $\rho^2=t^{2a}$. Further, it is a
$\kappa [t, t^{-1}]$-lattice of $\mathscr B_{r,s, K}$  in the
sense of \cite[5.2]{DT}. Let $\epsilon^2 $ be primitive $k$-th root of unity
in $\kappa $ and  let $M_\epsilon\subset \kappa [t, t^{-1}]$ be
the maximal ideal generated by $t-\epsilon$. Then $\kappa =\kappa [t, t^{-1}]/M_\epsilon$. In this case, we use $\kappa_\epsilon$ instead of $\kappa$ so as  to emphasis $\epsilon$.

Let $Grot (\mathscr B_{r,s, F})$ be the Grothendieck group of finite
dimension $\mathscr B_{r,s, F}$-modules over $F$, a  field  which is a $\kappa [t, t^{-1}]$-algebra. By
\cite[5.2(1)]{DT}, $Grot (\mathscr B_{r,s, K})\cong Grot (\mathscr B_{r,s,\kappa_\epsilon})$ if $k$, the order of $\epsilon^2$,
is big enough. Since $\mathscr B_{r,s}$ is a cellular algebra,  any
cell module $C(f, \lambda)$ of $\mathscr B_{r,s}$   can be considered as  $\kappa [t, t^{-1}]$-lattice
of the corresponding cell module $C(f, \lambda)_K$ of $\mathscr
B_{r,s, K}$. Therefore, the decomposition matrices of
$\mathscr B_{r,s}$ over $K$ and $\kappa_\epsilon$ are the same if the order of $\epsilon^2$ is big enough.
\end{proof}

Finally, we explain why  decomposition numbers of $\mathscr B_{r, s, \kappa}$ can be computed via those for
$q$-Schur algebras if $\rho^2\in q^{2\mathbb Z}$.

Suppose  $(f, \lambda')\in \Lambda_{r, s}$.  We have
$\U_\kappa$-module $ \Delta({\tilde \lambda})$.
When $\lambda$ is $e$-regular, we have $T(\tilde \lambda)$, an
indecomposable  direct summand of $V_\kappa^{r,s}$ with respect to the
highest weight $\tilde\lambda$. Both $T(\tilde\lambda)$ and
$\Delta(\tilde\lambda)$ are  rational representations of
$\U_\kappa$. When we consider the restriction of
such  modules to $\U_\kappa (\mathfrak {sl}_n)$, they are isomorphic
to $T({\tilde \lambda+s\omega}), \Delta ({\tilde \lambda+s\omega})$
with $\omega=(1, \cdots, 1)\in \Lambda^+(n)$, the corresponding
polynomial representations of $\U_\kappa $.
Further, we have the following well-known equalities:
\begin{equation}\label{rapo} (T({\tilde \lambda}): \Delta({\tilde \mu}))
=(T({\tilde \lambda+s\omega}): \Delta({\tilde
\mu+s\omega}))=[ \Delta(\alpha): L^\beta]\end{equation} 
where $\alpha$ (resp. $\beta$) is the conjugate of 
${\tilde \mu+s\omega}$ (resp. ${\tilde \lambda+s\omega}$).
We remark that the last equality follows from \cite[Proposition~4.1e]{Dk}.
 So, the decomposition numbers for $\mathscr B_{r, s, \kappa}$ can be computed via those for $q$-Schur algebras if $q^2\in q^{2\mathbb Z}$.
Finally, if  $\rho^2\not\in q^{2\mathbb Z}$, by  Theorems~\ref{blocks1},  decomposition numbers
of $\mathscr B_{r, s, \kappa}$ can be computed by those for Hecke algebras  associated to symmetric groups.

If the ground field $\kappa$ is $\mathbb C$, we can use Ariki's result in \cite{Ar} and
Varagnolo and Vasserot's results in \cite{VV}. In the latter case, we have to use $q$ instead of $q^{-1}$.
 In \cite{VV}, Varagnolo and Vasserot used  $\U_q(\mathfrak {gl}_n)$ in \cite{Jan}.
   By \cite[Remark~1.25]{APW}, we  need to use   $w_0(s\omega+\tilde \lambda)$ instead of our $s\omega+\tilde \lambda$ when we use corresponding result in \cite{VV}, where $w_0$ is the longest element in $\mathfrak S_n$.
  In summary, when the ground field is $\mathbb C$, decomposition numbers of $\mathscr B_{r, s, \kappa}$ can be computed via the values of inverse Kazhdan-Lusztig polynomials at $q=1$ associated to certain extended affine Weyl groups of type $A$. We leave the details to the reader.

 Cox and De~Visscher~\cite{CV} proved that  decomposition numbers of walled Brauer algebras over $\mathbb C$ are either $0$ or $1$. This should correspond to our result for
 $\mathscr B_{r, s}$ over $\mathbb C$ with $o(q)=\infty$ and $\rho^2\in q^{2\mathbb Z}$. Finally, it is natural to ask whether
 one can find  results for quantum general linear superalgebras and quantized walled Brauer algebras similar to those for general linear Lie superalgebras
 and walled Brauer algebras in
  \cite{BS}.

\providecommand{\bysame}{\leavevmode ---\ } \providecommand{\og}{``}
\providecommand{\fg}{''} \providecommand{\smfandname}{and}
\providecommand{\smfedsname}{\'eds.}
\providecommand{\smfedname}{\'ed.}
\providecommand{\smfmastersthesisname}{M\'emoire}
\providecommand{\smfphdthesisname}{Th\`ese}

\end{document}